%% This is a LaTeX template for preparing papers for Publ. Inst. Math.; version January 2016
%% Please delete everything begining with %% (DOUBLE %).

% Submission number: 
\documentclass[a4paper]{amsart}

\usepackage[british,english]{babel} 
\usepackage{graphics,color,pgf}
\usepackage{epsfig}
\usepackage[ansinew]{inputenc}
\usepackage[all]{xy}
\usepackage{hyperref}
\usepackage{amssymb, amsmath,amsthm, mathtools, amscd}
\usepackage{mathrsfs}
\usepackage{stmaryrd}
\usepackage{cancel} %strike out
\usepackage{tikz}
\usetikzlibrary{matrix}

\theoremstyle{plain}
 \newtheorem{thm}{Theorem}
 \newtheorem{prop}[thm]{Proposition}
 \newtheorem{lem}[thm]{Lemma}
 \newtheorem{cor}[thm]{Corollary}
 \newtheorem*{op*}{Open Problem} 
 \newtheorem*{frthmA*}{Theorem A} 
 \newtheorem*{frcorB*}{Corollary B} 
  \newtheorem*{frthmAp*}{Theorem A'} 
   
\theoremstyle{definition}
 
 \newtheorem{dfn}[thm]{Definition}
  \newtheorem{conj}[thm]{Conjecture}
\theoremstyle{remark}
 \newtheorem*{rem}{Remark}

%%New Math commands
\newcommand{\R}{\mathbb{R}}

 %piece
 %generating set

\DeclareMathOperator{\id}{\mathrm{id}}
 %homology
\DeclareMathOperator{\C}{\mathrm{C}} %cochains

%% Please, do not change the following four lines:
\renewcommand{\leq}{\leqslant}
\renewcommand{\geq}{\geqslant}
\renewcommand{\setminus}{\smallsetminus}
\setlength{\textwidth}{28cc} \setlength{\textheight}{50cc}

\DeclareMathOperator\bbR{\mathbb{R}}

\title[Some explicit cocycles on the boundary]{Some explicit cocycles on the Furstenberg boundary for products of isometries of hyperbolic spaces and $\mathrm{SL}(3,\mathbb{K})$}

\author[Michelle Bucher]{%\bfseries 
Michelle Bucher} 
\address{Universit\'e de Gen\`eve}
\email{Michelle.Bucher-Karlsson@unige.ch}

\author[Alessio Savini]{%\bfseries 
Alessio Savini} %% Please write ful names, avoid initials

\address{ %% Put here your affiliation; street address is not required
%Department of Mathematics \\ % \hfill (Received 00 00 201?)\\
University of Milano-Bicocca%   \\ %\hfill (Revised  00 00 201?)\\
%Town\\
%Country
}
\email{alessio.savini@unimib.it}

\thanks{Supported by the Swiss National Science Foundation} %% optional
%\thanks{Communicated by ...} %% This will be filled in the journal office.

\begin{document}

\begin{abstract}
In \cite{Monod}, Nicolas Monod showed that the evaluation map $H^*_m(G\curvearrowright G/P)\longrightarrow H^*_m(G)$ between the measurable cohomology of the action of a connected semisimple Lie group $G$ on its Furstenberg boundary $G/P$ and the measurable cohomology of $G$ is surjective with a kernel that can be entirely described in terms of invariants in the cohomology of a maximal split torus $A<G$. In \cite{BuSaAlt} we refine Monod's result and show in particular that the cohomology of non-alternating cocycles on $G/P$ is in general not trivial and lies in the kernel of the evaluation. In this paper we describe explicitly such non-alternating and alternating cocycles on $G/P$ in low degrees when $G$ is either a product of isometries of real hyperbolic spaces or $G=\mathrm{SL}(3,\mathbb{K})$, where $\mathbb{K}$ is either the real or the complex field. As a consequence, we deduce that the comparison map $H^*_{m,b}(G)\rightarrow H^*_m(G)$ from the measurable bounded cohomology is injective in degree $3$ for nontrivial products of isometries of hyperbolic spaces. We get also another proof of the injectivity for $G=\mathrm{SL}(3,\mathbb{K})$, when $\mathbb{K}$ is either the real field or the complex one. \end{abstract}

\maketitle

\section{Introduction}

Let $G$ be a connected semisimple Lie group with finite center. Its measurable cohomology $ H^*_m(G)$ is defined as the cohomology of the complex
$$L^0(G^{*+1},\mathbb{R})^G$$
endowed with its usual homogeneous differential. For any homogeneous $G$-space  $X$ one can similarly consider the measurable cohomology of the action $H^*_m(G\curvearrowright X)$ as the cohomology of the complex
$$L^0(X^{*+1},\mathbb{R})^G$$
also endowed with its homogeneous differential. Evaluation on a generic base point induces a map
\begin{equation}\label{eval map}
ev:H^*_m(G\curvearrowright X)\longrightarrow H^*_m(G)\end{equation}
which is easily shown not to depend on the base point. By Austin and Moore \cite[Theorem A]{AM} the measurable cohomology of $G$ is well understood: it is isomorphic to its continuous cohomology. The latter can be identified by the Van Est isomorphism with the singular cohomology of its compact dual symmetric space. However, in an aim to obtain explicit cocycles representing classes in $H^*_m(G)$, it is desirable to find the nicest (in terms of dimension, topological properties, etc) homogeneous $G$-space $X$ for which the evaluation map (\ref{eval map}) is surjective. For example, it is easy to see that if $K<G$ is a maximal compact subgroup, the evaluation map (\ref{eval map}) is in fact an isomorphism for the symmetric space $X=G/K$. 

Let now $P<G$ be a minimal parabolic subgroup. Monod remarkably showed that the evaluation map is surjective for the action on the Furstenberg boundary $X=G/P$. Furthermore, he gives a precise description of the kernel in terms of the $w_0$-invariant cohomology $H^*_m(A)^{w_0}$ of a maximal split torus $A<P$, where $w_0$ is a representative of the longest element of the Weyl group associated to $A$. Recall that the $w_0$-invariant cohomology of $A$ is isomorphic to the exterior algebra $(\wedge\mathfrak{a}^*)^{w_0}$, where the action of $w_0$ on $\wedge\mathfrak{a}^*$ is induced by the adjoint representation.

\begin{thm}\cite[Theorem B]{Monod}\label{Thm Monod} Let $G$ be a connected semisimple Lie group with finite center. The evaluation map 
$$ H^*_m(G\curvearrowright G/P)\longrightarrow H^*_m(G)$$
is surjective and its kernel 
$$NH^*_m(G\curvearrowright G/P):= \mathrm{Ker}(H^*_m(G\curvearrowright G/P)\longrightarrow H^*_m(G))$$
fits into an exact sequence
\begin{equation}\label{SES}(\wedge^{k-2}\mathfrak{a}^*)^{w_0} \overset{i}\longrightarrow NH^k_m(G\curvearrowright G/P) \longrightarrow  (\wedge^{k-1}\mathfrak{a}^*)^{w_0}\end{equation}
when $k\geq 3$, and for $k =2$, we have an isomorphism 
$$
NH^2_m(G \curvearrowright G/P) \cong (\mathfrak{a}^\ast)^{w_0}. 
$$\end{thm}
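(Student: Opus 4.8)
The plan is to reinterpret $NH^k_m(G\curvearrowright G/P)$ as the cohomology of a mapping cone and then to compute that cone geometrically from the Bruhat stratification of $G/P\times G/P$. Write $p\colon G\to G/P$ for the quotient map and $p^*\colon L^0((G/P)^{\bullet+1},\mathbb{R})^G\to L^0(G^{\bullet+1},\mathbb{R})^G$ for the induced cochain map realizing $ev$. Since the pushforward of Haar measure to $G/P$ is the quasi-invariant class, the preimage of a positive-measure set is again of positive measure, so $p^*$ is \emph{injective on cochains}; hence $ev$ can fail to be injective only through cocycles on the boundary that become exact on $G$ via cochains not pulled back from $G/P$. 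I would encode this by forming $\mathrm{Cone}(p^*)$ and reading its long exact sequence: granting the surjectivity of $ev$ (the first assertion of the statement), the cokernel terms vanish and the connecting maps give an isomorphism $NH^k_m(G\curvearrowright G/P)\cong H^{k-1}(\mathrm{Cone}(p^*))$. The whole problem is thereby transferred to computing this cone cohomology.

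The next step is to feed in the orbit geometry. By the Bruhat decomposition the $G$-orbits on $(G/P)^2$ are indexed by the Weyl group, with a unique open, full-measure orbit consisting of pairs of opposite chambers; its stabiliser is the minimal Levi $MA=P\cap P^-$, with $M$ compact. Because $M$ is compact and centralises $A$, it acts trivially on $\mathfrak{a}$, and the Van Est isomorphism identifies $H^*_m(MA)\cong H^*_c(A)\cong\wedge^*\mathfrak{a}^*$, which is the mechanism by which exterior powers of $\mathfrak{a}^*$ enter. Crucially, interchanging the two opposite chambers of such a configuration is implemented by a representative $w_0$ of the longest element of the Weyl group, so any symmetry constraint arising from permuting boundary points on the $G/P$ side forces the surviving classes to be $w_0$-invariant; this is the source of the superscript $w_0$ throughout.

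I would then stratify the cone complex by Bruhat type and argue that, after passing to $G$-invariant $L^0$-cochains, all non-generic strata drop out and only the generic opposite-pair stratum survives, whose relative cohomology is governed by the torus $A$ through the Hochschild--Serre spectral sequence of $N\triangleleft P$ (the unipotent radical contributing trivially). The two exterior-power degrees reflect the suspension built into the cone together with the one relative direction: I expect a two-step filtration of $H^{k-1}(\mathrm{Cone}(p^*))$ whose associated graded complexes compute $(\wedge^{k-2}\mathfrak{a}^*)^{w_0}$ and $(\wedge^{k-1}\mathfrak{a}^*)^{w_0}$. Extracting the total-degree-$k$ contributions then yields the three-term sequence $(\wedge^{k-2}\mathfrak{a}^*)^{w_0}\xrightarrow{i} NH^k_m(G\curvearrowright G/P)\to(\wedge^{k-1}\mathfrak{a}^*)^{w_0}$, exact at the middle, with $i$ the inclusion of the bottom graded piece and the second map the projection onto the top one. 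In degree $k=2$ the bottom piece is $(\wedge^0\mathfrak{a}^*)^{w_0}=\mathbb{R}$ and is absorbed by the image of the surjective evaluation map, so only the top piece remains and the sequence collapses to the stated isomorphism $NH^2_m(G\curvearrowright G/P)\cong(\mathfrak{a}^*)^{w_0}$.

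The main obstacle will be precisely this middle step: controlling the lower Bruhat strata and proving that the spectral sequence of the cone degenerates to exactly the two claimed columns. This demands showing that the positive-codimension orbits contribute nothing after taking invariants, that the generic stratum genuinely computes torus cohomology (with a careful measurable-to-continuous Van Est comparison in the $L^0$-setting, and the attendant measurable-section and Fubini arguments), and finally that the single surviving connecting map $i$, together with the $w_0$-action, is correctly identified. Pinning down the degree shifts by one and by two, and verifying exactness exactly at $NH^k_m(G\curvearrowright G/P)$, is where the bookkeeping --- and the genuine content --- lies.
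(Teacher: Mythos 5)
Your mapping-cone reformulation is formally correct as far as it goes, but it leaves a genuine gap: you \emph{assume} the surjectivity of $ev$, which is the first assertion of the theorem and is nowhere addressed in your plan. Surjectivity is not a harmless normalization; it is the hard half of Monod's theorem, and in the proof the paper follows (Section \ref{Spectral sequence}) it is obtained by the very same argument that yields the kernel description. There, the double complex $C^{p,q}=L^0(G^{p+1},L^0((G/P)^q))^G$ has acyclic total complex because its second spectral sequence collapses to zero (\cite[Proposition 6.1]{Monod}), and convergence to zero of the first spectral sequence forces the last differential $d_{k+1}$, out of the column carrying $H^k_m(G)$, to be an isomorphism onto the final quotient of $H^k_m(G\curvearrowright G/P)$; a diagram chase then identifies the inverse of this isomorphism with $ev$, and that is precisely the proof of surjectivity. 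Your cone contains no mechanism that could produce this, so at best you would obtain a conditional version of the kernel statement, not the theorem.

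The second gap is the one you flag yourself, and it is not mere bookkeeping; moreover, the tool you propose for it is pointed at the wrong place. In the $L^0$-setting, Bruhat-stratifying $(G/P)^2$ is vacuous: non-open orbits are null sets and are automatically invisible, so the open orbit of opposite pairs with stabilizer $MA$ comes for free --- this is exactly how one gets $E_1^{p,2}\cong H^p_m(MA)\cong H^p_m(A)$ by Eckmann--Shapiro, and it costs nothing. What actually isolates the two terms of (\ref{SES}) is (i) the vanishing, in positive degree, of the analogous terms built from \emph{three or more} boundary points, which holds because generic stabilizers are then compact (\cite[Proposition 5.1]{Monod}); your plan never discusses configurations of more than two points, yet this is where the collapse to two columns happens; (ii) the identification of the differential $H^p_m(P)\to H^p_m(A)$ with $(-1)^{p+1}2$ times the projection whose kernel is exactly the $w_0$-invariant classes, via the explicit formula (\ref{d from P to A}) --- the superscript $w_0$ in (\ref{SES}) is the kernel of this specific map, and your heuristic that ``permuting boundary points forces $w_0$-invariance'' is not a substitute; and (iii) the vanishing of the restriction $H^*_m(G)\to H^*_m(P)$ and of the $d_2$ leaving the $H^*_m(G)$ column, without which the two surviving columns would not inject into $NH^*_m$. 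Finally, your degree-$2$ justification (``the bottom piece $\mathbb{R}$ is absorbed by the image of the surjective evaluation map'') does not describe the real mechanism: the would-be term $H^0_m(A)^{w_0}=\mathbb{R}$ never feeds into $NH^2_m$ at all, because in bidegree $(0,2)$ the first page is occupied by the constants $L^0((G/P)^2)^G$, which belong to the bottom-row complex computing $H^*_m(G\curvearrowright G/P)$ itself rather than to the torus column.
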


The usual alternation map on $L^0((G/P)^{q+1})$ is idempotent and it induces a splitting into \emph{non-alternating} and \emph{alternating} cochains. Since alternation is a cochain map commuting with the $G$-action, we obtain a decompositon of the cohomomology on the boundary $G/P$ as follows:
$$
H^*_m(G \curvearrowright G/P) \cong H^\ast_{m, \mathrm{n-alt}}(G \curvearrowright G/P) \oplus H^\ast_{m,\mathrm{alt}}(G \curvearrowright G/P),
$$
and the same holds for the kernel:
$$
NH^\ast_m(G \curvearrowright G/P) \cong NH^\ast_{m,\mathrm{n-alt}}(G \curvearrowright G/P) \oplus NH^\ast_{m,\mathrm{alt}}(G \curvearrowright G/P). 
$$
In \cite[Theorem 2, Theorem 3]{BuSaAlt} we refine Theorem \ref{Thm Monod} and show in particular that
$$(\wedge^{k-2}\mathfrak{a}^*)^{w_0}\cong H^k_{m,\mathrm{n-alt}}(G\curvearrowright G/P) \cong NH^k_{m,\mathrm{n-alt}}(G \curvearrowright G/P),$$
$$(\wedge^{k-1}\mathfrak{a}^\ast)^{w_0} \cong NH^k_{m,\mathrm{alt}}(G \curvearrowright G/P).$$
As a consequence the short exact sequence of Equation \eqref{SES} corresponds exactly to the decomposition into non-alternating and alternating cochains on the boundary. 

In this paper, we propose to study the kernel in more details following our belief that explicit cohomology classes should be represented by explicit cocycles. We will thus describe the inclusion $i$ and produce a section $s$ of the short exact sequence \eqref{SES} in low degrees for some families of groups, leading to very explicit $G$-invariant cocycles on the boundary $G/P$. In fact, it is the discovery of such explicit non-alternating cocycles which motivated our paper \cite{BuSaAlt}. 

%In the case of the action of $\mathrm{SL}(3,\mathbb{C})$ on the projective space $P^2(\mathbb{C})$, a remarkable (unbounded) exotic cocycle representing a non trivial class in degree $5$ in the kernel of the evaluation map 
%$$ H^5_m(\mathrm{SL}(3,\mathbb{C})\curvearrowright P^2(\mathbb{C}))\longrightarrow H^5_m(\mathrm{SL}(3,\mathbb{C}))$$
%has been exhibited by Goncharov \cite{Goncharov}. 

\subsection{Products of isometries of real hyperbolic spaces} Consider first the case of one factor $G=\mathrm{Isom}^+(\mathbb{H}^n)$. If we look at the upper half space model, we identify $\partial \mathbb{H}^n$ with $\mathbb{R}^{n-1}\cup \{{\infty} \}$, we fix the stabilizer $P=\mathrm{Stab}({\infty})$ and take for maximal abelian subgroup $A<P$ the group of homotheties, namely $A:=\{a_\lambda:x\mapsto \lambda x\mid \lambda\in \mathbb{R}_{>0}\}$.

Given a $4$-tuple of distinct points $x_0,\dots,x_3\in \mathbb{R}^{n-1}\cup \{{\infty} \}=\partial \mathbb{H}^n$ we define their \emph{(positive) cross ratio}  by 
\begin{equation}\label{eq:cross:ratio:real}
{b}(x_0,\cdots,x_3)=\frac{\|x_2 -x_0\| \| x_3 - x_1\|}{\|x_2-x_1\|\|x_3-x_0\|}\in \mathbb{R}_{>0}, 
\end{equation}
where $\| \cdot \|$ denotes the Euclidean norm and we assume that ${\infty}/{\infty}=1$. In the case of $\partial \mathbb{H}^2=P^1\mathbb{R}$, respectively $\partial \mathbb{H}^3=P^1\mathbb{C}$, we get the absolute value, respectively the modulus, of the classic cross ratio of 4 points. It is not hard to verify that the positive cross ratio is  invariant under the isometry group of $\mathbb{H}^n$. 

Suppose now that $G=G_1\times \dots \times G_k$, where $G_i=\mathrm{Isom}^+(\mathbb{H}^{n_i})$ for some $n_i\geq 2$. Take $P=P_1\times \dots \times P_k$ and $A=A_1\times \dots \times A_k\cong \mathbb{R}^k$, with $A_i<P_i<G_i$ as above for each factor. We denote by $b_i$ the cross ratio defined on each of these factors.

%We define on the boundary $G/P=(\mathbb{R}^{n_1} \cup \{ {\infty} \}) \times \cdots \times (\mathbb{R}^{n_k} \cup \{ {\infty} \})$ a cross ratio with values in $(\mathbb{R}_>0)^k$ coordintate-wise for almost any $4$-tuple of points. More precisely, a point $\xi\in G/P$ is given as a $k$ tuple $\xi=(\xi^1,\dots,\xi^k)$ where each $\xi^i\in G_i/P_i$. For any $4$-tuple $\xi_0,\dots,\xi_3\in G/P$ such that every projection on $G_i/P_i$ consists of a $4$-tuple of distinct points, we define their \emph{cross ratio} as
%$$
%b(\xi_0,\dots,\xi_3)=(b_1(\xi_0^1,\dots,\xi_3^1),\dots,b_k(\xi_0^k,\dots,\xi_3^k))\in  (\mathbb{R}_>0)^k.$$
%where each  ${b}_i$ is the cross ratio associated to $\mathbb{R}^{n_i} \cup \{\underline{\infty}\}$, for $i=1,\cdots,k$. 

We will identify later a representative of the longest element $w_0\in G$, but even before doing so, it is clear that it has to act as $-1$ on $\mathfrak{a}$ since it acts non trivially and hence by $-1$ on each factor $\mathfrak{a}_i$. As a consequence, for $\ell\geq 3$, Monod's Theorem translates into 
$$NH_m^\ell(G\curvearrowright G/P)\cong \left\{ \begin{array}{ll}
(\wedge^{\ell-1}\mathfrak{a})^* &\mathrm{if \ }   \ell \mathrm{\ is \ odd,}\\
(\wedge^{\ell-2}\mathfrak{a})^* & \mathrm{if \ }   \ell \mathrm{\ is \ even.}
\end{array}\right.$$
It should be clear that the above kernel vanishes when $\ell$ is greater than $k+2$.  We give explicit versions of these isomorphisms at cocycle level in degree $3$ and $4$: 

\begin{thm}\label{Theorem 2 to 3 for prod of Hn} A section 
$$s:(\wedge^{2}\mathfrak{a})^*\longrightarrow NH^3_m(G\curvearrowright \Pi \partial \mathbb{H}^{n_i})$$ 
of the short exact sequence (\ref{SES}) is given by sending an alternating form $\alpha_\mathfrak{a}\in (\wedge^{2}\mathfrak{a})^*$ to the measurable almost everywhere defined $G$-invariant alternating cocycle 
$$\begin{array}{rcl}
s(\alpha_{\mathfrak{a}}):  ( \Pi \partial \mathbb{H}^{n_i})^4&\longrightarrow &\mathbb{R}\\
(x_0,\dots,x_3)&\longmapsto &4\alpha_\mathfrak{a}(\log (a_{b_i(x_0,x_1,x_2,x_3)})_{i=1}^k,\log (a_{b_i(x_1,x_2,x_3,x_0)})_{i=1}^k), 
\end{array}$$
where $\log$ denotes the logarithm sending $A$ to its Lie algebra $\mathfrak{a}$. 
\end{thm}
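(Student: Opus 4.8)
The plan is to verify three things about the proposed formula: that $s(\alpha_{\mathfrak{a}})$ is a well-defined $G$-invariant measurable alternating cocycle on $(\Pi\partial\bbH^{n_i})^4$, that it lies in the kernel of the evaluation map (so that it indeed lands in $NH^3_m$), and that the composition with the projection $NH^3_m(G\curvearrowright G/P)\to(\wedge^2\mathfrak{a}^*)^{w_0}$ of the short exact sequence \eqref{SES} recovers $\alpha_{\mathfrak{a}}$, i.e.\ that $s$ is a genuine section.

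First I would set up the cross ratio cocycle machinery. For each factor, $\log a_{b_i(x_0,x_1,x_2,x_3)}$ is a real number (the logarithm of the positive cross ratio), so $(\log a_{b_i(\cdots)})_{i=1}^k$ is a vector in $\mathfrak{a}\cong\mathbb{R}^k$, and feeding two such vectors into the alternating bilinear form $\alpha_{\mathfrak{a}}$ produces a real number. $G$-invariance is immediate from the $G_i$-invariance of each positive cross ratio $b_i$ recorded after \eqref{eq:cross:ratio:real}; measurability and almost-everywhere definedness come from the fact that the cross ratio is defined off the diagonals. The key computation is the cocycle identity. I would expand $\delta s(\alpha_{\mathfrak{a}})(x_0,\dots,x_4)$ and reduce everything, using the bilinearity and antisymmetry of $\alpha_{\mathfrak{a}}$, to the scalar multiplicative cocycle relations satisfied by the cross ratio $b_i$ on each factor. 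The classical cross ratio relations (e.g.\ the multiplicativity $b(x_0,x_1,x_2,x_3)\cdot b(x_0,x_2,x_3,x_1)\cdots$ type identities, equivalently the fact that $\log b_i$ is a boundary $2$-cocycle-like expression built from pairwise distances) should make the alternating sum telescope to zero. Checking that the coboundary vanishes after antisymmetrizing the two arguments of $\alpha_{\mathfrak{a}}$ is the technical heart.

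To show $s(\alpha_{\mathfrak{a}})$ lies in the kernel of evaluation, I would use that the section is being constructed \emph{into} $NH^3_m$ by design, so the real content is compatibility with the exact sequence. Here I would invoke the explicit description of the map to $(\wedge^{k-1}\mathfrak{a}^*)^{w_0}=(\wedge^2\mathfrak{a}^*)^{w_0}$ from \cite{Monod} (and its refinement in \cite{BuSaAlt}), which identifies the alternating part $NH^3_{m,\mathrm{alt}}(G\curvearrowright G/P)\cong(\wedge^2\mathfrak{a}^*)^{w_0}$. Since $w_0$ acts as $-1$ on $\mathfrak{a}$, it acts trivially on $\wedge^2\mathfrak{a}^*$, so the invariance condition is vacuous and every $\alpha_{\mathfrak{a}}\in(\wedge^2\mathfrak{a})^*$ is admissible; the normalizing constant $4$ is presumably chosen exactly so that the projection sends $s(\alpha_{\mathfrak{a}})$ back to $\alpha_{\mathfrak{a}}$, which I would confirm by restricting the cocycle to the relevant configuration of boundary points that computes the torus cohomology class (typically a limit or special tuple where the cross ratios degenerate to the torus coordinates).

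The main obstacle I expect is the cocycle identity verification: establishing that the alternating sum over the five faces vanishes requires the precise functional equations of the positive cross ratio under permutation and under the coboundary, and matching them correctly against the antisymmetry of $\alpha_{\mathfrak{a}}$. A secondary subtlety is pinning down the exact constant and confirming the section property, which demands knowing explicitly how the isomorphism $NH^3_{m,\mathrm{alt}}\cong(\wedge^2\mathfrak{a}^*)^{w_0}$ is realized at the cochain level; I would lean on the explicit form of the comparison developed in \cite{BuSaAlt} rather than rederiving it, and reduce the verification to a single well-chosen degenerate configuration of the five points.
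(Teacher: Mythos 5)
Your first two steps are sound, but they reproduce only the ``retrospective'' argument that the paper itself sketches after the statement of the theorem, and step 2 is much simpler than you make it: for a product of isometry groups of real hyperbolic spaces one has $H^3_m(G)=0$ (the measurable cohomology of $G$ is generated by cup products of an even number of volume classes), so the evaluation map in degree $3$ is identically zero and \emph{every} class on the boundary lies in $NH^3_m(G\curvearrowright G/P)$; no appeal to ``compatibility with the exact sequence'' is needed. Likewise the cocycle identity and nontriviality can be checked directly (invariant $2$-cochains are a.e.\ constant by $3$-transitivity, hence have vanishing coboundary, and independence of the classes follows from the unboundedness argument of Proposition \ref{Omega3 unbounded hyperbolic}). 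Combined with Monod's dimension count, this shows that $\alpha_\mathfrak{a}\mapsto[s(\alpha_\mathfrak{a})]$ is an \emph{isomorphism} onto $NH^3_m(G\curvearrowright\Pi\partial\mathbb{H}^{n_i})$.

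The genuine gap is your third step. Being an isomorphism onto $NH^3_m$ is strictly weaker than being a section of \eqref{SES}: since $(\wedge^1\mathfrak{a}^*)^{w_0}=0$ here, the projection $NH^3_m\to(\wedge^2\mathfrak{a}^*)^{w_0}=\wedge^2\mathfrak{a}^*$ is itself an isomorphism, so the composite of the projection with your map could a priori be \emph{any} automorphism of $\wedge^2\mathfrak{a}^*$; the theorem asserts it is the identity, and that is exactly where the constant $4$ (and the sign) lives. Your plan to pin this down by ``restricting the cocycle to a degenerate configuration where the cross ratios degenerate to the torus coordinates'' does not correspond to any actual realization of the projection: that projection is the edge map of Monod's spectral sequence, and computing it---or, as the paper does, its inverse---at the cochain level requires lifting through the double complex $C^{p,q}=L^0(G^{p+1},L^0((G/P)^q))^G$, i.e.\ the contracting homotopies $h^{p,q}$ built from $3$-transitivity, the operator $\mathcal{H}^{p-1}$ of Lemma \ref{induction}, and the explicit Iwasawa projections of Proposition \ref{prop: pi A projections}. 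That diagram chase (Subsection \ref{proof of theorem 2}) is precisely what produces the factor $4$ at the last step, after nontrivial cancellations among the three $\pi_A$-projection terms; evaluating the boundary cocycle at a special tuple of points computes nothing about the projection, and ``presumably the constant is chosen so that it works'' cannot substitute for this computation.
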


%\begin{rem} Let $G^0$ denote the connected component of the identity in $G$. The cocycles in Theorem \ref{Theorem 2 to 3 for prod of Hn} are obviously also $G^0$-invariant and our proof, relying on the $3$-transitivity of $G$ on $G/P$ shows that the same section works for $G^0$ whenever  $n_i \geq 3$ for all $i=1,\ldots,k$.  In the case when one of $n_i=2$, we can only deduce that the value on tuples which are positively oriented in each $2$-dimensional factor has the above form. But the yet unknown functoriality of the section prevents us to conclude that the section provided by the spectral sequence takes this form on all configurations of points, even though this is very likely to be the case. 
%The same remark also applies to Theorem \ref{Theorem 2 to 4 for prod of Hn}.
%\end{rem}

For $G=\mathrm{PSL}(2,\mathbb{R}) \times \mathrm{PSL}(2,\mathbb{R})$ this cocycle (on tuples which are positively oriented in each factor) is explicitly given in \cite{Monod}.  All other cases are new. The case of several $2$-dimensional factors is a straightforward generalization of Monod's cocycle, but the higher dimensional case, relying on explicitly exploiting the $3$-transitivity of the action on the boundary, is more involved as $K\cap P$ is not trivial in dimension $n\geq 3$, where $K=\mathrm{SO}(n)$ is the maximal compact subgroup of $\mathrm{Isom}^+(\mathbb{H}^n)$.  

We will prove Theorem \ref{Theorem 2 to 3 for prod of Hn} in Section \ref{SS for prod of real hyp} by following the spectral sequence (that we recall in Section \ref{Spectral sequence})  introduced by Monod  to establish Theorem  \ref{Thm Monod}. Note however that in this case it is easy to show retrospectively that  $NH^3_m(G\curvearrowright \Pi \partial \mathbb{H}^{n_i})$ is generated by the classes given in Theorem \ref{Theorem 2 to 3 for prod of Hn}.  Indeed, any such cochain is easily verified to be an alternating cocycle which takes infinitely many and unbounded values. As a consequence, it cannot be the coboundary of a $2$-cochain, as those are constant functions by $3$-transitivity. Furthermore, it is trivially in the kernel of the evaluation map since $H^3_m(G)=0$. Indeed $H^*_m(G)$  is generated (as an algebra) by the cup products of an even number of volume classes. Finally, a similar coboundary argument combined with our Proposition \ref{Omega3 unbounded hyperbolic} shows that there are no relations among these classes so that they generate a subspace of dimension $\binom{k}{2}$ in the  $\binom{k}{2}$-dimensional (by Theorem \ref{Thm Monod}) space $NH^3_m(G\curvearrowright \Pi \partial \mathbb{H}^{n_i})$. The point of our proof is thus merely, beside showing that the natural isomorphism induced by the spectral sequence really is given by the map in Theorem \ref{Theorem 2 to 3 for prod of Hn}, to show how to find these explicit cocycles. 

The classes in degree $4$ are given by a surprisingly similar expression, which is all the more astonishing as the intermediate expressions we obtain following the spectral sequence for Theorems \ref{Theorem 2 to 3 for prod of Hn} and \ref{Theorem 2 to 4 for prod of Hn} show no similarity until their very last expression. 

\begin{thm}\label{Theorem 2 to 4 for prod of Hn}
The isomorphism
$$i:(\wedge^{2}\mathfrak{a})^*\longrightarrow NH^4_m(G\curvearrowright \Pi \partial \mathbb{H}^{n_i})$$ 
of the short exact sequence (\ref{SES}) is given by sending an alternating form $\alpha_\mathfrak{a}\in (\wedge^{2}\mathfrak{a})^*$ to the measurable almost everywhere defined $G$-invariant non-alternating cocycle
%$$i(\alpha_{\mathfrak{a}}): ( \Pi \partial \mathbb{H}^{n_i})^4\longrightarrow \mathbb{R}$$
%which on $4$-tuples $(x_0,\dots,x_3)\in  ( \Pi \partial \mathbb{H}^{n_i})^4$ of distinct which are positively oriented on each $\partial \mathbb{H}^2$-factor, takes the form
%$$i(\alpha_{\mathfrak{a}})(x_0,\dots,x_3)=\alpha_\mathfrak{a}(\log (a_{b_i(x_0,x_1,x_2,x_3})_{i=1}^k),\log (a_{b_i(x_1,x_2,x_3,x_4)})_{i=1}^k),$$
$$\begin{array}{rcl}
i(\alpha_{\mathfrak{a}}): ( \Pi \partial \mathbb{H}^{n_i})^5&\longrightarrow &\mathbb{R}\\
(x_0,\dots,x_4)&\longmapsto &\alpha_\mathfrak{a}(\log (a_{b_i(x_0,x_1,x_2,x_3)})_{i=1}^k,\log (a_{b_i(x_1,x_2,x_3,x_4)})_{i=1}^k),
\end{array}$$
where $\log$ denotes the logarithm sending $A$ to its Lie algebra $\mathfrak{a}$. \end{thm}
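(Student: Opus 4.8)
The plan is to follow the same spectral sequence that Monod uses to establish Theorem~\ref{Thm Monod}, recalled in Section~\ref{Spectral sequence}, but now tracking the inclusion $i$ rather than the section $s$ of Theorem~\ref{Theorem 2 to 3 for prod of Hn}. Since $w_0$ acts as $-1$ on $\mathfrak{a}$, it acts as $+1$ on $\wedge^2\mathfrak{a}^\ast$ and as $-1$ on $\wedge^3\mathfrak{a}^\ast$, so that $(\wedge^2\mathfrak{a}^\ast)^{w_0}=(\wedge^2\mathfrak{a})^\ast$ while $(\wedge^3\mathfrak{a}^\ast)^{w_0}=0$. Hence in degree $k=4$ the short exact sequence \eqref{SES} degenerates to an isomorphism $i:(\wedge^2\mathfrak{a})^\ast\xrightarrow{\ \cong\ }NH^4_m(G\curvearrowright\Pi\partial\mathbb{H}^{n_i})$, and by the refinement recalled in the introduction its image is exactly the non-alternating part $NH^4_{m,\mathrm{n-alt}}$ (the alternating part $NH^4_{m,\mathrm{alt}}\cong(\wedge^3\mathfrak{a}^\ast)^{w_0}$ vanishing). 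The content of the theorem is therefore to compute this abstract isomorphism at the level of cochains.

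First I would identify $i$ concretely as the edge map of the spectral sequence transgressing a class of $H^2_m(A)=(\wedge^2\mathfrak{a})^\ast$ into $NH^4_m$. Starting from $\alpha_\mathfrak{a}$ on the $A$-page, I would climb the relevant staircase, at each step solving the coboundary equation for one of the two differentials and producing an explicit primitive on the appropriate auxiliary space. The geometric input converting $A$-data into boundary data is the cross ratio: for a generic $4$-tuple the assignment $(x_0,x_1,x_2,x_3)\mapsto c(x_0,x_1,x_2,x_3):=(\log a_{b_i(x_0,x_1,x_2,x_3)})_{i=1}^{k}\in\mathfrak{a}$ records the $A$-valued invariant, and feeding these $\mathfrak{a}$-valued functions into $\alpha_\mathfrak{a}$ builds the successive cochains on powers of $\Pi\partial\mathbb{H}^{n_i}$. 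This is the degree-$4$ analogue of the computation for Theorem~\ref{Theorem 2 to 3 for prod of Hn}, except that the staircase is one step longer and outputs a $4$-cochain on $5$ points rather than a $3$-cochain on $4$ points.

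The main obstacle, as flagged in the remark following Theorem~\ref{Theorem 2 to 3 for prod of Hn}, is that the intermediate primitives are unwieldy and bear no visible resemblance to the clean final answer; the delicate part is the bookkeeping of the staircase, keeping track of the splitting $P=MAN$ and, because $M=K\cap P$ is nontrivial once some $n_i\geq 3$, of the residual compact stabilizer arising from $3$-transitivity. To tame this I would work throughout with the additive decomposition $\log b_i(x_0,x_1,x_2,x_3)=f_i(x_0,x_2)+f_i(x_1,x_3)-f_i(x_1,x_2)-f_i(x_0,x_3)$, where $f_i$ is the logarithm of the Euclidean distance in the $i$-th factor, which turns every manipulation of cross ratios into a linear manipulation of the $f_i$ and lets the bilinearity and antisymmetry of $\alpha_\mathfrak{a}$ do the collapsing. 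The expected payoff is that all intermediate terms cancel in pairs and leave precisely
$$
i(\alpha_\mathfrak{a})(x_0,\dots,x_4)=\alpha_\mathfrak{a}\bigl(c(x_0,x_1,x_2,x_3),\,c(x_1,x_2,x_3,x_4)\bigr).
$$

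Finally I would verify the resulting formula directly, independently of the staircase. Using the additive decomposition together with the antisymmetry of $\alpha_\mathfrak{a}$, a direct expansion of $\delta\bigl(i(\alpha_\mathfrak{a})\bigr)$ on $6$ points should telescope to $0$, confirming that the formula is a genuine $G$-invariant measurable cocycle defined almost everywhere; computing its alternation, which should vanish, confirms that it is non-alternating in the sense of the splitting, consistent with $NH^4_{m,\mathrm{n-alt}}\cong(\wedge^2\mathfrak{a}^\ast)^{w_0}$. That the class lies in the kernel of evaluation is built into its position in the spectral sequence; note that, unlike in degree $3$ where $H^3_m(G)=0$, here $H^4_m(G)$ can be nonzero (for instance the product of two $\mathbb{H}^2$-volume classes), so this step genuinely uses the spectral sequence rather than a naive dimension count. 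Injectivity of $\alpha_\mathfrak{a}\mapsto i(\alpha_\mathfrak{a})$, and hence the fact that $i$ is the claimed isomorphism, then follows from the abstract identification of the edge map, and can be double-checked by a coboundary argument as in the degree-$3$ case together with Proposition~\ref{Omega3 unbounded hyperbolic} to rule out relations.
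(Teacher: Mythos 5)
Your framework is the right one, and it is the paper's: in degree $4$ the sequence (\ref{SES}) degenerates because $w_0$ acts by $-1$ on $\mathfrak{a}$, hence $(\wedge^3\mathfrak{a}^*)^{w_0}=0$, and the isomorphism $i$ is computed by chasing $\overline{\overline{\alpha_G}}$ through the staircase of Lemma \ref{Injection}. You also correctly flag that membership in the kernel of evaluation must come from the spectral sequence (since $H^4_m(G)$ need not vanish), and that the cocycle and non-alternation properties can be checked directly at the end. But the proposal stops exactly where the proof begins: the entire content of the theorem is the explicit staircase computation, and you replace it by the sentence that ``the expected payoff is that all intermediate terms cancel in pairs and leave precisely'' the formula. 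In the paper this step requires the contracting homotopies $h^{p,q}$ built from $3$-transitivity (Subsection \ref{homo for products}), the explicit form of $d^\rightarrow:C^{p,2}\to C^{p,3}$ (Lemma \ref{d 2 to 3}), the choice of isometries $g_{\underline{x}\underline{y}\underline{z}}$, and the eighteen $\pi_A$-projections of Propositions \ref{piA for mu}, \ref{piA for lambda} and \ref{piA for nu}; none of this has a substitute in your plan. In particular, your bookkeeping device --- expanding $\log b_i$ as $f_i(x_0,x_2)+f_i(x_1,x_3)-f_i(x_1,x_2)-f_i(x_0,x_3)$ --- cannot drive the staircase, because the intermediate cochains $\beta\in C^{1,3}$ and $\omega\in C^{0,4}$ are functions on $G^2\times(G/P)^3$ and $G\times(G/P)^4$ whose values are $\alpha$ evaluated on $\pi_A$-projections of products of group elements such as $w_0^{-1}g_{\underline{0}\,\underline{e_1}x}^{-1}$; cross ratios only materialize after the final evaluation at $e$. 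Moreover the ``exact'' cancellation you anticipate is not what happens: the staircase output $\Omega_4$ equals the claimed formula \emph{plus} a nonzero explicit coboundary (of $x\mapsto\widetilde{\alpha}\left(\left(a_{\|x_i\|}\right)_{i=1}^k,\left(a_2\right)_{i=1}^k\right)$), which is fine cohomologically but has to be recognized and discarded as such.

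A second, concrete error is your closing claim that injectivity ``can be double-checked by a coboundary argument as in the degree-$3$ case together with Proposition \ref{Omega3 unbounded hyperbolic}.'' That argument is specific to degree $3$: there, a $G$-invariant $2$-cochain on $(G/P)^3$ is a.e.\ constant by $3$-transitivity, so its coboundary is bounded (indeed zero) and an unbounded cocycle cannot be trivial. In degree $4$ a $G$-invariant $3$-cochain on $(G/P)^4$ is an arbitrary measurable function on the positive-dimensional space of orbits of $4$-tuples (for instance any function of the cross ratios $b_i$), so its coboundary can perfectly well be unbounded, and unboundedness of $i(\alpha_\mathfrak{a})$ rules out nothing. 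This is precisely why the paper states that, in contrast to Theorem \ref{Theorem 2 to 3 for prod of Hn}, it does not know how to prove nontriviality of these degree-$4$ classes without going through the spectral sequence. Consequently the injectivity in your argument must rest entirely on the identification of the edge map, that is, on actually completing the computation you have only sketched.
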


To see that these cocycles are non-alternating, observe that $i(\alpha_{\mathfrak{a}})(x_0,\dots,x_4)=-i(\alpha_{\mathfrak{a}})(x_4,\dots,x_0)$. Since there is a factor $-1$, whereas the sign of the permutation $(x_0,\dots,x_4)\mapsto (x_4,\dots,x_0)$ is $+1$, the alternation of these cocycles automatically vanishes. In the case of two factors, the computation is carried out in \cite[Proposition 18]{BuSaAlt}.

While it is easy to show that these cochains are indeed invariant cocycles on $ \Pi \partial \mathbb{H}^{n_i}$, we do not know, in contrast to Theorem \ref{Theorem 2 to 3 for prod of Hn}, how to even show their nontriviality in cohomology without going through the spectral sequence. The proof of Theorem \ref{Theorem 2 to 4 for prod of Hn} will be presented in Section \ref{SS for prod of real hyp}.

%Observe that the formula from Theorem \ref{Theorem 2 to 4 for prod of Hn} actually defines a cocycle on all $4$-tuples of distinct points (not only the ones that are positively oriented in each $\partial \mathbb{H}^2$ factors) but we fail to see why two cocycles agreeing on the latter subset are cohomologeous.  

\begin{rem} Our proof also gives an explicit expression in higher degrees for the inclusion and the section. However, the expression is a sum of $(\ell-1)3^{\ell-1}$, respectively $3^{\ell-2}$, evaluations of $\alpha_\mathfrak{a}$ on various cross ratios in degree $\ell$ for $\ell$ odd, respectively even, which we were unable to simplify. \end{rem}

\subsection{The special linear group $\mathrm{SL}(3,\mathbb{K})$ for $\mathbb{K}=\mathbb{R},\mathbb{C}$}

We give a complete description of the kernel $NH^\ast_m(G \curvearrowright G/P)$ for the first cases of irreducible Lie groups of higher rank, namely $G=\mathrm{SL}(3,\mathbb{K})$, for $\mathbb{K}=\mathbb{R},\mathbb{C}$. In this context we fix $P$ as the group of upper triangular matrices with $\mathbb{K}$-entries and we consider $A<P$ the subgroup of diagonal matrices with positive real entries. This time, the adjoint action of the longest element $w_0$ is given on the $2$-dimensional Lie algebra $\mathfrak{a}$ by a reflection, so there is an eigenspace of fixed vectors and another one associated to the eigenvalue $-1$. As a consequence, the $w_0$-invariant cohomology of $A$ is not trivial only in degree one  (Lemma \ref{lem invariant SL3}) and it determines the kernel $NH^\ast_m(G \curvearrowright G/P)$ in degree $2$ and $3$. 

In this setting the boundary $G/P=\mathcal{FL}(3,\mathbb{K})$ parametrizes \emph{complete flags} in $P^2(\mathbb{K})$, namely pairs $(p,\ell)$, where $p$ is a point on the line $\ell$. The quotient of the space of tuples of flags with respect to the diagonal action of $G$ is called \emph{configuration space} and it has been widely studied so far. In the particular case of triples and $4$-tuples, several authors contributed to give an explicit parametrization of a subset of full measure, namely the one of flags in \emph{general position} (see the works by Bergeron-Falbel-Guilloux \cite{BFG14} or  Dimofte-Gabella-Goncharov \cite{DGG16}). We refer to Section \ref{sec general position} for a precise definition. 

A complete invariant for the class of a triple $(F_0,F_1,F_2)$ of flags in general position is the \emph{triple ratio} $\tau(F_0,F_1,F_2)\in \mathbb{K}\setminus\{0,-1\}$. Despite its name, the triple ratio is a cross ratio computed on the line $\ell_1$ in terms of the intersection scheme of the given triple (see Definition \ref{dfn triple ratio}).

%The role played by the triple ratio for our purposes becomes manifest thanks to the following

\begin{thm}\label{Thm 1 to 2 for sl3}

 A section 
$$s:(\mathfrak{a}^*)^{w_0}\cong \mathbb{R} \longrightarrow NH^2_m(G\curvearrowright \mathcal{FL}(3,\mathbb{K}))$$ 
of the short exact sequence (\ref{SES}) is given by sending a $w_0$-invariant linear form $\alpha_{\mathfrak{a}} \in (\mathfrak{a}^\star)^{w_0}$ to the measurable almost everywhere defined alternating $G$-invariant cocycle 
$$\begin{array}{rcl}
s(\alpha): (\mathcal{FL}(3,\mathbb{K}))^3 &\longrightarrow &\mathbb{R}\\
(F_0,F_1,F_2) & \longmapsto & \alpha_{\mathfrak{a}}(\log |\tau(F_0,F_1,F_2)|).
\end{array}$$
\end{thm}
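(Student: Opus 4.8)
The plan is to verify directly the pointwise properties of $s(\alpha)$ and then to pin down, via Monod's spectral sequence, that the class it represents is sent to $\alpha$ by the isomorphism of Theorem \ref{Thm Monod}. Writing a flag as a pair $F_i=(v_i,\phi_i)$ with $v_i\in\mathbb{K}^3$, $\phi_i\in(\mathbb{K}^3)^\ast$ and $\phi_i(v_i)=0$, on the full-measure set of triples in general position the triple ratio of Definition \ref{dfn triple ratio} is, up to the normalisation fixed there, $\tau(F_0,F_1,F_2)=\frac{\phi_0(v_1)\phi_1(v_2)\phi_2(v_0)}{\phi_0(v_2)\phi_1(v_0)\phi_2(v_1)}\in\mathbb{K}\setminus\{0,-1\}$. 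Each $v_i$ and each $\phi_i$ occurs exactly once in the numerator and once in the denominator, so $\tau$ is insensitive to the projective rescalings and is $\mathrm{SL}(3,\mathbb{K})$-invariant; hence $|\tau|$ is a real $G$-invariant and $s(\alpha)$ is $G$-invariant and defined almost everywhere. A transposition of two flags inverts $\tau$ while a cyclic permutation fixes it, so $\log|\tau|$ is alternating and so is $s(\alpha)$. For the cocycle property I would record the multiplicative functional equation $\tau(F_1,F_2,F_3)\,\tau(F_0,F_1,F_3)=\tau(F_0,F_2,F_3)\,\tau(F_0,F_1,F_2)$, which is an identity in the numbers $\phi_i(v_j)$ and therefore holds on every $4$-tuple in general position; taking $\log|\cdot|$ gives exactly $d\,s(\alpha)=0$. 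Finally, since the continuous cohomology $H^2_m(G)=H^2_c(G)$ vanishes (for $\mathbb{K}=\mathbb{R}$, resp. $\mathbb{C}$, the compact dual being the Wu manifold $\mathrm{SU}(3)/\mathrm{SO}(3)$, resp. the group $\mathrm{SU}(3)$, whose real cohomology is trivial in degree $2$), the evaluation map is zero and $[s(\alpha)]$ automatically lies in $NH^2_m(G\curvearrowright\mathcal{FL}(3,\mathbb{K}))$.

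For $k=2$ the sequence (\ref{SES}) is the isomorphism $q\colon NH^2_m(G\curvearrowright\mathcal{FL}(3,\mathbb{K}))\xrightarrow{\ \sim\ }(\mathfrak{a}^\ast)^{w_0}\cong\mathbb{R}$ of Theorem \ref{Thm Monod} and Lemma \ref{lem invariant SL3}, and a section is by definition a right inverse of $q$. I would first show that $s$ is itself an isomorphism, which identifies $q\circ s$ with multiplication by a nonzero scalar $\lambda$, and then compute $\lambda=1$. Injectivity of $s$ is elementary: $G$ acts transitively on the full-measure set of pairs of flags in general position (the open Bruhat cell, of relative position $w_0$), so every $G$-invariant $1$-cochain on $\mathcal{FL}(3,\mathbb{K})^2$ is almost everywhere constant and hence every coboundary in this degree is almost everywhere constant; as $\tau$ is a complete invariant it takes a continuum of values, so $s(\alpha)$ is non-constant and therefore not a coboundary when $\alpha\neq0$. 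Since source and target are one-dimensional, $s$ is an isomorphism onto $NH^2_m$, and it remains to evaluate $\lambda=q([-\log|\tau|])$.

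This is the one point where I would genuinely run the spectral sequence of Section \ref{Spectral sequence} rather than argue by hand. The isomorphism $q$ is the edge map transgressing a boundary $2$-cocycle to the degree-$1$ class on the generic-pair space $\mathcal{FL}(3,\mathbb{K})^2\cong G/L$, with $L=MA$; since $M$ is compact and centralises $A$, one has $H^\ast_m(G\curvearrowright G/L)\cong H^\ast_m(L)\cong\wedge^\ast\mathfrak{a}^\ast$, whose degree-$1$ part is $\mathfrak{a}^\ast$, the $w_0$-invariance reflecting the symmetry that exchanges the two flags of the generic pair. The task is then to carry $-\log|\tau|$ through this edge map and to recognise its image as the distinguished generator of $(\mathfrak{a}^\ast)^{w_0}$ furnished by Lemma \ref{lem invariant SL3}, concretely the functional selecting the middle logarithmic eigenvalue of $A$.

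The main obstacle is exactly this normalisation computation. Making the transgression explicit on the triple-ratio cocycle requires choosing a base configuration, tracking the $A$-dependence that the edge map extracts, and carrying through the sign and scale conventions of Definition \ref{dfn triple ratio} together with the chosen identification $(\mathfrak{a}^\ast)^{w_0}\cong\mathbb{R}$; the delicacy is compounded by the fact that $\tau$ is itself $A$-invariant, so the relevant class in $\mathfrak{a}^\ast$ appears only after the transgression and not from a naive restriction to a single $A$-orbit. Getting every sign and factor right is what fixes the coefficient to be $1$ (in contrast with the $4$ of Theorem \ref{Theorem 2 to 3 for prod of Hn}) and the overall sign to be $-$, yielding $q(s(\alpha))=\alpha$ and hence that $s$ is the asserted section. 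This follows the same strategy as the proof of Theorem \ref{Theorem 2 to 3 for prod of Hn} in Section \ref{SS for prod of real hyp}, to which I would defer for the detailed bookkeeping.
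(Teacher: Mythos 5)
Your first two steps are correct and essentially reproduce what the paper itself points out in the remarks following the theorem: the multiplicative formula $\tau(F_0,F_1,F_2)=\frac{\phi_0(v_1)\phi_1(v_2)\phi_2(v_0)}{\phi_0(v_2)\phi_1(v_0)\phi_2(v_1)}$ gives $G$-invariance, alternation and the cocycle identity; vanishing of $H^2_m(\mathrm{SL}(3,\mathbb{K}))$ places the class in the kernel; and transitivity of $G$ on generic pairs of flags forces coboundaries to be constant, so the class is nonzero. This correctly shows that $NH^2_m(G\curvearrowright \mathcal{FL}(3,\mathbb{K}))$ is one-dimensional and spanned by $[\log|\tau|]$, hence that $q\circ s$ is multiplication by some $\lambda\neq 0$.

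The genuine gap is that the theorem is precisely the statement $\lambda=1$, and your proposal never establishes it: you identify the normalisation computation as "the main obstacle" and then defer it to "detailed bookkeeping" by analogy with Theorem \ref{Theorem 2 to 3 for prod of Hn}. But that computation \emph{is} the proof; nothing in your argument constrains $\lambda$ beyond being nonzero, and such constants are genuinely convention-dependent and not automatically $1$ --- indeed in Theorem \ref{Theorem 2 to 3 for prod of Hn} the analogous coefficient comes out as $4$. Concretely, what is missing is the diagram chase of Proposition \ref{Section} in the case $p=2$: representing $\alpha$ by $\alpha_G=\pi_A^*\alpha\in C^{1,1}$, computing $\beta=\mathcal{H}^1(\overline{\alpha_G})(g)=\alpha_G(w_0^{-1}\pi_A(g),w_0^{-1}g)$, expressing $d^\rightarrow:C^{0,2}\to C^{0,3}$ on the standard normalized triple $(F_{can},w_0F_{can},n_\tau w_0F_{can})$ in terms of the explicit matrices $n_\tau$ and $g_\tau$ (Lemma \ref{lem differential sl3}), identifying $(\mathfrak{a}^*)^{w_0}=\langle e_2^*\rangle$ (Lemma \ref{lem invariant SL3}), and evaluating the second diagonal Iwasawa coordinates of $w_0^{-1}g_\tau^{-1}$, $w_0^{-1}n_\tau^{-1}$ and their $\pi_A$-projections via Lemma \ref{lem iwasawa sl3}, which is what produces $-\lambda\log|\tau|$ on the nose. (A side remark: your description of $q$ as an edge map through the pair space $G/MA$ also gets the mechanism slightly backwards --- in Monod's spectral sequence the isomorphism runs \emph{from} $H^1_m(A)^{w_0}$ \emph{to} the boundary cohomology via $d_2$, so computing that differential on a representative directly produces the section and makes your generation argument unnecessary; conversely, one cannot read off the scalar without performing it.)
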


\begin{rem} An explicit isomorphism $(\mathfrak{a}^*)^{w_0}\cong \mathbb{R}$ is given  in Subsection \ref{SL3R2}  and is used in  the proof of Theorem \ref{Thm 1 to 2 for sl3}. This remark applies also to Theorem \ref{Thm 1 to 3 for sl3}.
\end{rem}

\begin{rem}
Thanks to personal communications of Elisha Falbel with the first author, we understood that the cocycle realizing the explicit expression of the section can be written as a coboundary when viewed on the space of \emph{affine} flags. If we write an affine flag as $F:=(v,f) \in \mathcal{FL}_\mathrm{aff}(3,\mathbb{K})$, where $v$ is a non-zero vector of $\mathbb{K}^3$ and $f$ is linear functional vanishing on $v$, for any $\alpha \in (\mathfrak{a}^\ast)^{w_0}$ the function 
$$
h_\alpha:(\mathcal{FL}_{\mathrm{aff}}(3,\mathbb{K}))^2 \rightarrow \mathbb{R}, \ \ h_\alpha(F_0,F_1):=\alpha \left(\log\left| \frac{f_0(v_1)}{f_1(v_0)} \right|\right)
$$
is measurable and satisfies $dh_\alpha=s(\alpha)$ by \cite[Section 3.3]{FW17}. 
\end{rem}

The proof of Theorem \ref{Thm 1 to 2 for sl3} can be found in Section \ref{sec proof 1 2 sl3} and it relies on a careful study of Monod's spectral sequence. As is the case for Theorem \ref{Theorem 2 to 3 for prod of Hn}, a simpler proof is possible to show that $NH^2_m(G\curvearrowright \mathcal{FL}(3,\mathbb{K}))$ is generated by one such nonzero cocycle. Indeed, it is immediate to see that the triple ratio is a multiplicative cocycle and hence its logarithm is a cocycle lying in $L^0((G/P)^3)^{G}$ which further trivially is in the kernel of the evaluation map, since $H_m^2(\mathrm{SL}(3,\mathbb{K}))$ vanishes for $\mathbb{K}=\mathbb{R},\mathbb{C}$. Alternatively, by the second remark after Theorem \ref{Thm 1 to 3 for sl3}, it is easy to exhibit an explicit coboundary in $L^0(G^2)^G$. Additionally, being non-constant, it determines a non-trivial cohomology class in $H^2_m(G \curvearrowright \mathcal{FL}(3,\mathbb{K}))$. Indeed, $G$ acts transitively on pairs of flags in general position, so coboundaries must be constant. 

The case $NH^3_m(G \curvearrowright \mathcal{FL}(3,\mathbb{K}))$ can be treated similarly. This time configurations of $4$-tuples of flags in general position are parametrized by four \emph{cross ratios} $(z_{01},z_{10},z_{23},z_{32})$. For each $z_{ij}$ the indices $(i,j)$ refer to the line on which the cross ratio is computed and to the intersection scheme one has to consider. For a precise definition we refer the reader to Definition \ref{dfn cross ratios tuple}.

\begin{thm}\label{Thm 1 to 3 for sl3}

 The isomorphism
$$i:(\mathfrak{a}^*)^{w_0}\longrightarrow NH^3_m(G\curvearrowright \mathcal{FL}(3,\mathbb{K}))$$ 
of the short exact sequence (\ref{SES}) is given by sending a $w_0$-invariant linear form $\alpha_\mathfrak{a}\in (\mathfrak{a}^*)^{w_0}$ to the measurable almost everywhere defined $G$-invariant non-alternating cocycle 
$$\begin{array}{rcl}
i(\alpha_{\mathfrak{a}}): (\mathcal{FL}(3,\mathbb{K}))^4 &\longrightarrow &\mathbb{R}\\
(F_0,\dots,F_3) & \longmapsto & -\frac{2}{3}\alpha_{\mathfrak{a}}(\log(|z_{10}|/|z_{23}|)),
\end{array}$$
where $\log$ denotes the logarithm sending $A$ to its Lie algebra $\mathfrak{a}$.
\end{thm}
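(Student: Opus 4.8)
The plan is to follow Monod's spectral sequence, recalled in Section~\ref{Spectral sequence}, treating this statement as the direct $\mathrm{SL}(3,\mathbb{K})$-analogue of Theorem~\ref{Theorem 2 to 4 for prod of Hn}: in both we compute the inclusion $i$ issuing from the non-alternating summand $(\wedge^{k-2}\mathfrak{a}^*)^{w_0}$, which for $k=3$ is $(\mathfrak{a}^*)^{w_0}$. I would begin by recording the algebraic input that upgrades this inclusion to an isomorphism. By Lemma~\ref{lem invariant SL3} the longest element $w_0$ acts on the $2$-dimensional algebra $\mathfrak{a}$ as a reflection, whence $(\wedge^2\mathfrak{a}^*)^{w_0}=0$. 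By the refinement of \cite{BuSaAlt}, the short exact sequence \eqref{SES} realises the splitting into non-alternating and alternating summands, so the alternating part $NH^3_{m,\mathrm{alt}}\cong(\wedge^2\mathfrak{a}^*)^{w_0}$ vanishes for $k=3$; hence $i\colon(\mathfrak{a}^*)^{w_0}\to NH^3_m(G\curvearrowright\mathcal{FL}(3,\mathbb{K}))$ is an isomorphism onto the whole kernel, which is therefore entirely non-alternating. This is what licenses stating the result as an isomorphism rather than merely a section.

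Next I would dispatch the easy half: the displayed formula is a $G$-invariant measurable cocycle. Its $G$-invariance is immediate from the $G$-invariance of the cross ratios $z_{ij}$, read off the projective intersection scheme of the flags (Section~\ref{sec general position}, Definition~\ref{dfn cross ratios tuple}). That it is non-alternating follows exactly as for Theorem~\ref{Theorem 2 to 4 for prod of Hn}, by comparing the sign picked up under the order-reversing permutation of $(F_0,\dots,F_3)$ with the sign of that permutation. The cocycle identity itself reduces to a single multiplicative relation among the cross ratios attached to a generic $5$-tuple of flags, which I would verify by using transitivity of $G$ on generic configurations to normalise $(F_0,\dots,F_4)$ to a standard representative and then computing the relevant $z_{10}$ and $z_{23}$ in coordinates.

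The substantive content, and the step I expect to be the main obstacle, is to identify this cocycle with the class $i(\alpha_{\mathfrak{a}})$ produced by the spectral sequence, the constant $-\tfrac{2}{3}$ included. For this I would represent $\alpha_{\mathfrak{a}}\in(\mathfrak{a}^*)^{w_0}$ on the relevant entry of the $E_2$-page, lift it through the filtration of Monod's double complex, and read off the resulting invariant $3$-cochain on $(\mathcal{FL}(3,\mathbb{K}))^4$. The transfer from the split torus $A$ to the boundary is mediated by a Busemann-type cocycle, so the crux is to express its $\mathfrak{a}$-valued output through the coordinates $(z_{01},z_{10},z_{23},z_{32})$, building on the degree-$2$ computation of Theorem~\ref{Thm 1 to 2 for sl3}, whose source $(\mathfrak{a}^*)^{w_0}$ is the very same. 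I expect the appearance of the rational factor $-\tfrac{2}{3}$ and the collapse of the a priori lengthy lifted expression down to the single term $\alpha_{\mathfrak{a}}(\log(|z_{10}|/|z_{23}|))$ to absorb the bulk of the work. As in the hyperbolic degree-$4$ case, I do not anticipate a shortcut bypassing the spectral sequence: even nontriviality of the class seems out of reach of the elementary coboundary argument that sufficed in degree~$2$.
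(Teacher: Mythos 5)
Your proposal takes essentially the same route as the paper's proof: the paper also represents $\alpha_{\mathfrak{a}}$ by the $\pi_A$-pullback cocycle $\alpha_G$ viewed in $C^{1,2}$, pushes it through the explicit injection of Lemma~\ref{Injection} using contracting homotopies built from the standard normalization of flag triples, and then evaluates the resulting $\pi_A$-projections (computed via Lemma~\ref{lem iwasawa sl3}, with the final simplification done in \texttt{SageMath}) on the normalized $4$-tuple \eqref{Standard four flags} to produce the factor $-\tfrac{2}{3}$ and the coordinates $|z_{10}|,|z_{23}|$. Your preliminary points (the isomorphism claim via Lemma~\ref{lem invariant SL3}, and the $G$-invariance, non-alternation and cocycle checks for the displayed formula) are correct and match what the paper records in its remarks surrounding the theorem.
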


\begin{rem}
Also in this case, for any $\alpha \in (\mathfrak{a}^\ast)^{w_0}$, we can check that $i(\alpha)$ is a coboundary when viewed on the space of affine flags $\mathcal{FL}_{\mathrm{aff}}(3,\mathbb{K})$. If we define
$$
h_\alpha:(\mathcal{FL}_{\mathrm{aff}}(3,\mathbb{K}))^3 \rightarrow \mathbb{R}, \ \ h_\alpha(F_0,F_1,F_2):=-\frac{2}{3}\alpha \left(\mathrm{\log}\left|\frac{f_1(v_0)f_1(v_2)}{\mathrm{det}(v_0,v_1,v_2)}\right| \right),
$$
then again $dh_\alpha=i(\alpha)$ by \cite[Equation 3.4.1]{FW17}. 
\end{rem}

The fact that these cocycles are non-alternating follows as for the cocycles in degree $4$ on $ \Pi \partial \mathbb{H}^{n_i}$ from the fact that $i(\alpha_{\mathfrak{a}})(F_0,\dots,F_3)=-i(\alpha_{\mathfrak{a}})(F_3,\dots,F_0)$, which is easily deducible from the definition of the cross ratios of flags. 

Again, our proof is based on Monod'  spectral sequence. But in this case we rely on the open source software \texttt{SageMath} to perform some computations. 

Here also a more direct approach to show that $NH^3_m(G\curvearrowright \mathcal{FL}(3,\mathbb{R}))$ is generated by one such nonzero cocycle is possible: First, it is not hard to verify by hand that $\log(|z_{10}|/|z_{23}|)$ is a cocycle lying in $L^0((G/P)^4)^{G}$. Second, an explicit coboundary in $L^0(G^3)^G$ is easy to produce (see the remark after Theorem \ref{Thm 1 to 3 for sl3}) so that these cocycles lie in the kernel of the evaluation maps.  Third, a computation based on the relations between cross ratios and triple ratios of a $4$-tuple allows to show that the cocycle represents a nontrivial class in $NH^3_m(G\curvearrowright \mathcal{FL}(3,\mathbb{R}))$.

%This follows from Proposition \ref{Prop log b/c unbounded}  where we show that such a cocycle is not cohomologous to a bounded (in particular the zero) cocycle. 
%This is straightforward in the real case, since , whereas it is not trivial for $\mathrm{SL}(3,\mathbb{C})$, since its measurable cohomology in degree three is one dimensional.The second item for $\mathbb{K}=\mathbb{C}$ and third item can be realized simultaneously: Indeed we show in Proposition \ref{Prop log b/c unbounded} that such a cocycle is not cohomologeous to a bounded cocycle. In particular it cannot be cohomologeous to the zero cocycle, proving the third item.  

%Next, one has to prove that such cocycle lies in the kernel of the evaluation map.The key point is to show that the cohomology class of $\log(|z_{10}|/|z_{23}|)$ does not contain any essentially bounded function. A possible approach to such issue is to prove that our starting function is not constant even if we leave constant all the triple ratios associated to the triples obtained by removing one flag from the initial $4$-tuple $(F_0,\ldots,F_3)$. This can be done by exploting the fact that the triple ratios satisfies the multiplicative cocycle relation. The previous idea is exactly the core of the proof of Proposition \ref{Prop log b/c unbounded} (and hence of Theorem \ref{Injectivity comparison sl3}), which shows that this cocycle is not cohomologous to a bounded cocycle. In this way one obtains simultaneously that the starting function is not trivial in cohomology and further that it does not lie in the kernel of the evaluation map. 

\subsection{(Un)bounded cohomology}

The \emph{bounded (measurable) cohomology} of a group or an action, denoted by $H^*_{m,b}(G)$ and $H^*_{m,b}(G\curvearrowright X)$, respectively, is defined by considering the complexes of $L^{\infty}$ functions instead of $L^0$. The inclusion $L^{\infty}\hookrightarrow L^0$ naturally induces comparison maps between the bounded and unbounded measurable cohomology groups, so that we obtain a commutative diagram
\begin{equation*}
\xymatrix{
H^*_{m,b}(G\curvearrowright X) \ar[r]\ar[d]& H^*_{m,b}(G)\ar[d]\\
H^*_{m}(G\curvearrowright X) \ar[r] & H^*_m(G).}
\end{equation*}
In the case $X=G/P$, by the amenability of $P$, the upper horizontal map in the above commutative diagram is an isomorphism \cite[Corollary 3.8]{BuModeg2}. It is still a mysterious open conjecture that the vertical right arrow should be an isomorphism \cite[Problem A]{MonodICM}. For the surjectivity, Dupont \cite[Remark 3]{Dupont} more generally asked whether a particular family of cocycles obtained by integration of straightened simplices in the symmetric space $G/K$ are bounded, which would imply surjectivity of this map. We will concentrate on the injectivity:

\begin{conj} \label{conj Monod} \cite[Problem A]{MonodICM} Let $G$ be a connected semisimple Lie group with finite center. The comparison map 
$$H^*_{m,b}(G)\longrightarrow H^*_m(G)$$
is injective. 
\end{conj}
Conjecture \ref{conj Monod} is trivial in degrees $0$ and $1$. It is further known to hold: 

\noindent In degree $2$: in all generality \cite{BuModeg2}.

\noindent In degree $3$: for $\mathrm{SL}(2,\mathbb{R})$ \cite{BuModeg3}, for $\mathrm{SL}(2,\mathbb{C})$ \cite{Bloch}, for $\mathrm{SL}(n,\mathbb{R})$ \cite{MonodStab}, for $\mathrm{SL}(n,\mathbb{C})$ \cite{MonodStab,BBI18}, for $\mathrm{Isom}^+(\mathbb{H}^n)$ \cite{Pieters, Monod}, for $\mathrm{Sp}_{2k}(\mathbb{C})$ \cite[Theorem F]{DeLaCruz}. 

\noindent In degree $4$: remarkably only for $\mathrm{SL}(2,\mathbb{R})$, which is an achievement by Hartnick and Ott \cite{HaOtt}.

Given Monod's description of the measurable cohomology on the boundary, it is natural to decompose the above conjecture into the two following weaker conjectures: 

\begin{conj} \label{Conj injectivity boundary} The comparison map 
$$H^*_{m,b}(G\curvearrowright G/P)\longrightarrow H^*_m(G\curvearrowright G/P)$$
is injective. 
\end{conj}

\begin{conj} \label{Conj Gap}The intersection of $NH^*_m(G\curvearrowright G/P)$ with the image of the comparison map 
$$H^*_{m,b}(G\curvearrowright G/P)\longrightarrow H^*_m(G\curvearrowright G/P)$$
is $0$. 
\end{conj}

\begin{lem} \label{lem conj equals two half conj} Conjecture \ref{conj Monod} is equivalent to the validity of both Conjectures \ref{Conj injectivity boundary} and   \ref{Conj Gap}. 
\end{lem}

\begin{proof} This is straightforward from the commutative diagram
\begin{equation*}
\xymatrix{
&H^*_{m,b}(G\curvearrowright G/P) \ar[r]^{\hspace{10pt} ev_b} \ar[d]^{c_{G/P}}& H^*_{m,b}(G)\ar[d]^{c_G}\\
NH^*_{m}(G\curvearrowright G/P) \ar[r]&H^*_{m}(G\curvearrowright G/P) \ar[r]^{\hspace{10pt} ev} & H^*_m(G).}
\end{equation*}
Indeed the comparison map $c_G$ is injective if and only if its precomposition with the isomorphism $ev_b$ is injective, but by the commutativity of the diagram this is
$$c_G\circ ev_b=ev\circ c_{G/P}.$$
Now, injectivity of the latter composition is equivalent to 
\begin{itemize}
\item the injectivity of $c_{G/P}$, which is precisely the statement in Conjecture \ref{Conj injectivity boundary} and 
\item the injectivity of $ev$ on the image of $c_{G/P}$, which is equivalent to requiring that the image of $c_{G/P}$ intersects the kernel of $ev$ trivially, i.e. Conjecture  \ref{Conj Gap}. \end{itemize}\end{proof} 

We follow the strategy of decomposing Conjecture  \ref{conj Monod} into Conjectures \ref{Conj injectivity boundary} and   \ref{Conj Gap} to prove Monod's conjecture in degree $3$ for the groups we have considered:

\begin{thm} \label{injectivity for products} Let $G=\prod_{i=1}^k \mathrm{Isom}^+(\mathbb{H}^{n_i})$, where $n_i \geq 2$ for all $1 \leq i \leq k$. The map
 $$c_{G}:H^3_{m,b}(G)\longrightarrow H_m^3(G)$$
 is injective. 
\end{thm}

Observe that in this case, surjectivity obviously holds.  Theorem \ref{injectivity for products}  was known in the case of one factor $G=\mathrm{Isom}^+(\mathbb{H}^n)$ \cite{Bloch,Pieters,Monod}, but is new for more than one factor. Note also that in contrast to measurable cohomology, no K\"unneth formula is known for bounded cohomology, so the injectivity cannot be deduced from the factor case. 

Using the same strategy, we obtain an alternative proof of the following known result: 
\begin{thm} \label{Injectivity comparison sl3} Let $\mathbb{K}$ be the real or complex field. The comparison map 
$$c_G:H^3_{m,b}(\mathrm{SL}(3,\mathbb{K}))\longrightarrow H^3_m(\mathrm{SL}(3,\mathbb{K}))$$
is injective. 
\end{thm}

When $\mathbb{K}=\mathbb{R}$, the result is due to Monod \cite[Theorem 1.2]{MonodStab}. As it is well known that $H_m^3(\mathrm{SL}(3,\mathbb{R}))=0$, also in this case, the comparison map is an isomorphism.

When $\mathbb{K}=\mathbb{C}$, the result \cite[Theorem 2]{BBI18} was proved by the first author, Burger and Iozzi, using stability results from \cite{MonodStab}. In this case the group $H^3_m(\mathrm{SL}(3,\mathbb{C}))$ is generated by the Borel class, which is bounded. Thus the comparison map is surjective and hence again an isomorphism.

\subsection*{Plan of the paper.} We recall the spectral sequence used by Monod to prove Theorem \ref{Thm Monod} in Section \ref{Spectral sequence} and give explicit though theoretic descriptions of the nonzero differentials in Section \ref{Differentials}. We deal with the case of products of isometry groups of hyperbolic spaces in Section \ref{Section: Prod of hyp} by first recalling in Subsection \ref{Isom of Hn} the form of isometries in the upper half space model, where we also compute a little too many $\pi_A$-projections which will be all needed in the proofs of Theorems \ref{Theorem 2 to 3 for prod of Hn} and \ref{Theorem 2 to 4 for prod of Hn}. In Subsection \ref{homo for products} we exploit $3$-transitivity to describe the differentials in this case. Finally we prove Theorem   \ref{Theorem 2 to 3 for prod of Hn} in Subsection \ref{proof of theorem 2}, Theorem \ref{injectivity for products}  in Subsection \ref{proof of thm injectivitiy} and Theorem \ref{Theorem 2 to 4 for prod of Hn}  in Subsection \ref{proof of thm 3}. In our last Section \ref{SL3R}, we start by recalling some generalities like the triple ratio and cross ratio of triples or quadruples of flags in Subsection \ref{SL3R1}. We compute the dimensions of $H^*_m(A)^{w_0}=(\wedge\mathfrak{a}^*)^{w_0}$ in Subsection \ref{SL3R2} and give an expression for the $\pi_A$-projection in Subsection \ref{SL3R3}. Explicit differentials are exhibited in Subsection \ref{SL3R4}. Finally, Theorems \ref{Thm 1 to 2 for sl3} and \ref{Thm 1 to 3 for sl3} are proved in Subsections \ref{SL3R5} and \ref{SL3R6}, and the injectivity of the comparison map in degree $3$ for $\mathrm{SL}(3,\mathbb{K})$ (Theorem \ref{Injectivity comparison sl3}) is established in Subsection \ref{SL3R7}.
%and the injectivity of the comparison map in degree $3$ for $\mathrm{SL}(3,\mathbb{K})$ (Theorem \ref{Injectivity comparison sl3}) is established in Subsection \ref{SL3R7}.\marginpar{Rewrite once finalized}

\subsection*{Acknowledgements} We are grateful to Elisha Falbel, Antonin Guilloux and Nicolas Monod  for several useful discussions in the preparation of this paper. 

\section{Monod's spectral sequence}\label{Spectral sequence}
Let $G$ be a connected semisimple Lie group with finite center, $P<G$ a minimal parabolic subgroup, $A<P$ a maximal split torus, $M=Z_K(\mathfrak{a})$ the centralizer of the Lie algebra $\mathfrak{a}$ in the maximal compact subgroup $K<G$ and $w_0$ a representative of the longest element of the Weyl group associated to $A$. The proof of Monod's Theorem \ref{Thm Monod} relies on the study of the spectral sequence associated to the double complex
$$C^{p,q}:=L^0(G^{p+1},L^0( (G/P)^q))^G,$$
endowed with two differentials 
$$
d^\uparrow: C^{p,q} \rightarrow C^{p+1,q}\ , \ d^\rightarrow: C^{p,q} \rightarrow C^{p,q+1}   .
$$
The vertical differential is the homogeneous differential on $G$, whereas the horizontal one is $(-1)^{p+1}$ times the homogenous differential on $G/P$ (to ensure that $d^\uparrow  d^\rightarrow=d^\rightarrow   d^\uparrow$). This bicomplex leads to two spectral sequences converging to the cohomology of the corresponding total complex. 

The spectral sequence of main interest starts at page $1$ with
\begin{equation}E_1^{p,q}=(H^p(C^{p,q}, d^\uparrow), d_1=d^\rightarrow).\label{E}\end{equation}
The other spectral sequence, obtained as above by exchanging the roles of $d^\uparrow$ and $d^\rightarrow$, is shown \cite[Proposition 6.1]{Monod} to collapse immediately to $0$ (from page $1$). We will see below what can be deduced from knowing that the spectral sequence from (\ref{E}) converges to $0$.

Recall that in general to find the image $[\alpha]\in E_k^{p,q}$ by the differential 
$$d_k:E_k^{p,q}\longrightarrow E_k^{p-k+1,q+k}$$
of a class represented by $\alpha\in C^{p,q}$, one needs to find a sequence of $\alpha_i\in C^{p-i,q+i}$, for $0\leq i \leq k-1$ such that
$$\alpha_0=\alpha \ \ \mathrm{and} \ \ d^\rightarrow \alpha_{i-1}=d^\uparrow \alpha_i.$$
The image $d_k([\alpha])$ is then simply represented by $d^\rightarrow \alpha_{k-1}$. 

\begin{figure}[!h]
\centering
\begin{tikzpicture}
  \matrix (m) [matrix of math nodes,
             nodes in empty cells,
             nodes={minimum width=9ex,
                    minimum height=9ex,
                    outer sep=-3pt},
             column sep=2ex, row sep=-1ex,
             text centered,anchor=center]{
         p      &         &          &          & \\
          \cdots & \cdots & \cdots & \cdots   & \cdots \\
          3    & \ H^3_m(G) \  & \ H^3_m(P) \ &\  H^3_m(A) \ & 0 & \ \cdots & \\
          2    &  \ H^2_m(G) \ &\  H^2_m(P) \  & \  H^2_m(A) \ & 0 & \  \cdots &  \\
          1    & \ H^1_m(G) \ & \ H^1_m(P) \ & \  H^1_m(A)\  & 0 & \   \cdots & \\
          0     & \ \mathbb{R}\   &\  \textup{L}^0(G/P)^G \ & \  \textup{L}^0((G/P)^2)^G  \ & \  \textup{L}^0((G/P)^3)^G\  &  \ \cdots & \\
    \quad\strut &   0  &  1  &  2  &  3  &  \cdots & q \strut \\};

%lines are counted starting from the top 

%arrows in the 3rd line
\draw[->](m-3-2.east) -- (m-3-3.west)node[midway,above ]{$0$};
\draw[->](m-3-3.east) -- (m-3-4.west)node[midway,above ]{$d^\rightarrow$};
\draw[->](m-3-4.east) -- (m-3-5.west);

%arrows in the 2nd line
\draw[->](m-4-2.east) -- (m-4-3.west)node[midway,above ]{$0$};
\draw[->](m-4-3.east) -- (m-4-4.west)node[midway,above ]{$d^\rightarrow$};
\draw[->](m-4-4.east) -- (m-4-5.west);

%arrows in the 1st line
\draw[->](m-5-2.east) -- (m-5-3.west)node[midway,above ]{$0$};
\draw[->](m-5-3.east) -- (m-5-4.west)node[midway,above ]{$d^\rightarrow$};
\draw[->](m-5-4.east) -- (m-5-5.west);

%arrows in the 1st line
\draw[->](m-6-2.east) -- (m-6-3.west)node[midway,above ]{$\delta$};
\draw[->](m-6-3.east) -- (m-6-4.west)node[midway,above ]{$\delta$};
\draw[->](m-6-4.east) -- (m-6-5.west)node[midway,above ]{$\delta$};

\draw[thick] (m-1-1.east) -- (m-7-1.east) ;
\draw[thick] (m-7-1.north) -- (m-7-7.north) ;
\end{tikzpicture}
\caption{The first page $E_1$}\label{Page 1}
\end{figure}

\subsection*{First page of the spectral sequence} For the first column (for $q=0$), we obviously have $E_1^{p,0}=H^p_m(G)$. For the second and third columns, as $P$ and $MA$ are respectively the stabilizers of one and two (generic) point(s) on $G/P$, by the Eckmann-Shapiro isomorphism \cite[Theorem 6]{Moore} we have that
$E_1^{p,1}=H^p_m(P)$ and $E_1^{p,2}=H^p_m(MA) \cong H^p_m(A)$. The latter isomorphism holds since $M$ is compact and centralizes $A$. The cohomology of the remaining columns is trivial \cite[Proposition 5.1]{Monod} by the compactness of stablizers of a generic triple. In this way we are only left with the $G$-invariant of the coefficient modules $L((G/P)^q)^G$ in degree $0$, that is, for $E_1^{0,q}$. The first page is depicted in Figure \ref{Page 1}.

\subsection*{The differential $d_1$} It is straightforward to check that the differential 
$$d_1:E^{p,0}=H^p_m(G)\longrightarrow H^p_m(P)=E^{p,1}$$
induced by $d^\rightarrow$ is, as a map $H^p_m(G)\rightarrow H^p_m(P)$, simply $(-1)^{p+1}$ times the map induced by the inclusion $P<G$. What is  much less direct is the fact  that the restriction map is trivial. This is proved either in \cite[Corollary 3.2]{Monod} or \cite[Corollary 3]{Wienhard}.

 It is also clear that 
$$d_1:E^{0,q}=L^0((G/P)^q)^G\longrightarrow L^0((G/P)^{q+1})^G=E^{0,q+1}$$ 
translates into $(-1)$ times the homogeneous differential $\delta$. 

As for
$$d_1:E^{p,1}=H^p_m(P)\longrightarrow H^p_m(A)=E^{p,2},$$
it is shown in the proof of \cite[Proposition 4.1]{Monod}  that it is induced by 
\begin{equation}\label{d from P to A}\begin{array}{rccl}
d^\rightarrow:&C^{p,1}= L^0(G^{p+1})^P&\longrightarrow &L^0(G^{p+1})^A\\
 & f&\longmapsto &\{(g_0,\dots,g_p)\mapsto (-1)^{p+1}[f(w_0^{-1}g_0,\dots,w_0^{-1}g_p)-f(g_0,\dots,g_p)]\}.
\end{array}\end{equation}

The first differential, viewed as a map 
$$d_1:H^p_m(P)\cong H^p_m(A)\longrightarrow H^p_m(A)$$
is $(-1)^{p+1}2$ times the projection with kernel equal to the $w_0$-invariant cohomology classes in $H^*_m(A)$ and image the $w_0$-equivariant cohomology classes in $H^*_m(A)$ (for the obvious action by $\pm 1$ of $w_0$ on the coefficients $\mathbb{R}$). As a consequence the second page $E_2$ can be depicted as in Figure \ref{Page 2}

\begin{figure}[!h]
\centering
\begin{tikzpicture}
  \matrix (m) [matrix of math nodes,
             nodes in empty cells,
             nodes={minimum width=9ex,
                    minimum height=9ex,
                    outer sep=-3pt},
             column sep=2ex, row sep=-1ex,
             text centered,anchor=center]{
         p      &         &          &          & \\
          \cdots & \cdots & \cdots & \cdots   & \cdots \\
          3    & \ H^3_m(G) \  & \ H^3_m(A)^{w_0} \ &\  H^3_m(A)^{w_0} \ & 0 & \ \cdots & \\
          2    &  \ H^2_m(G) \ &\  H^2_m(A)^{w_0} \  & \  H^2_m(A)^{w_0} \ & 0 & \  \cdots &  \\
          1    & \ H^1_m(G) \ & \ H^1_m(A)^{w_0} \ & \  H^1_m(A)^{w_0} \  & 0 & \   \cdots & \\
          0     & \ 0\   &\  H^0_m(G \curvearrowright G/P) \ & \  H^1_m(G \curvearrowright G/P)  \ & \  H^2_m(G \curvearrowright G/P) \  &  \ \cdots & \\
    \quad\strut &   0  &  1  &  2  &  3  &  \cdots & q \strut \\};

%lines are counted starting from the top 

%arrows in the 3rd line
\draw[->](m-3-2) -- (m-4-4);
\draw[->](m-3-3) -- (m-4-5);

%arrows in the 2nd line
\draw[->](m-4-2) -- (m-5-4);
\draw[->](m-4-3) -- (m-5-5);

%arrows in the 1st line
\draw[->](m-5-2) -- (m-6-4);
\draw[->](m-5-3) -- (m-6-5);

\draw[thick] (m-1-1.east) -- (m-7-1.east) ;
\draw[thick] (m-7-1.north) -- (m-7-7.north) ;
\end{tikzpicture}
\caption{The second page $E_2$} \label{Page 2}
\end{figure}

\subsection*{The differential $d_2$} The differential 
$$d_2:E_2^{p,0}=H^{p}_m(G)\longrightarrow H^{p-1}_m(A)^{w_0}=E^{p-1,2}_2$$
vanishes for all $p\geq 1$. This is proved in \cite{Monod} only in the case where $w_0$ acts by $-1$ on $\mathfrak{a}$. We  therefore present the general case here. Let thus $\alpha_0\in C^{p,0}$ be representing a class $[\alpha]\in E_2^{p,0}=H^p_m(G)$. By construction, there exists $\alpha_1\in C^{p-1,1}$ such that $d^\uparrow \alpha_1=d^\rightarrow \alpha_0$ and
$$d_2([\alpha_0])=[d^\rightarrow \alpha_1]\in H^{p-1}_m(A)^{w_0}=E^{p-1,2}_2.$$
In particular, $d^\rightarrow \alpha_1$ represents a cohomology class in $H^{p-1}_m(A)=E^{p-1,2}_1$ which is $w_0$-equivariant. But since $d_1=d^\rightarrow:H^{p-1}_m(A)=E_1^{p-1,1}\rightarrow E_1^{p-1,2}=H^{p-1}_m(A)$ is $(-1)^{p}2$ times the projection on the $w_0$-equivariants of $H^{p-1}_m(A)$, the class 
$d^\rightarrow \alpha_1$ lies in its image and hence vanishes in $E^{p-1,2}_2$.

It follows that the only possibly nonzero differentials $d_2$, are the two differentials
$$d_2:E_2^{1,1}=H^1_m(A)^{w_0}\longrightarrow H^2_m(G \curvearrowright G/P)=E_2^{0,2}$$
and
$$d_2:E_2^{1,2}=H^1_m(A)^{w_0}\longrightarrow H^2_m(G \curvearrowright G/P)=E_2^{0,3}.$$ 

\subsection*{Nonzero differentials $d_p$} For $p\geq 2$, the only possibly nonzero differentials are, trivially, the three starting from the columns $0$, $1$ or $2$ with image on the first row.
% $$d_p:H^{p-1}_c(A;\mathbb{R})^{w_0}\longrightarrow  H^p_m(G \curvearrowright G/P) \quad \mathrm{and} \quad d_p:H^{p-1}_c(A;\mathbb{R})^{w_0}\longrightarrow  H^{p+1}_m(G \curvearrowright G/P).$$ 
 
 \subsection*{Conclusion} It follows that the only way that $E_2$ converges to $0$ is that
 $$d_{p-1}:H^{p-2}_m(A)^{w_0}\longrightarrow  H^{p}_m(G \curvearrowright G/P)$$
 is an injection, 
  $$d_{p}:H^{p-1}_m(A)^{w_0}\longrightarrow  H^{p}_m(G \curvearrowright G/P)/d_{p-1}(H^{p-2}_m(A)^{w_0})$$
 is an injection and
  $$d_{p+1}:H^{p}_m(G)\longrightarrow \big( H^{p}_m(G \curvearrowright G/P)/d_{p-1}(H^{p-2}_m(A)^{w_0})\big) / d_p(H^{p-1}_m(A)^{w_0}) $$
  is an isomorphism. 
  
 A standard diagram chase shows that the inverse of $d_3$ is indeed induced by the evaluation map, finishing the proof of Theorem \ref{Thm Monod}.

 \section{Explicit differentials} \label{Differentials}

The aim of this section is to exhibit explicit maps for the injection and a section of the short exact sequence (\ref{SES}) from Theorem \ref{Thm Monod}. We will thus present in Lemma \ref{Injection} and Proposition \ref{Section} maps at  cocycle level realizing the differentials 
$$d_{p-1}:H^{p-2}_m(A)^{w_0}\longrightarrow  H^{p}_m(G \curvearrowright G/P)$$
and
  $$d_{p}:H^{p-1}_m(A)^{w_0}\longrightarrow  H^{p}_m(G \curvearrowright G/P)/d_{p-1}((H^{p-2}_m(A))^{w_0}),$$
which in fact have images in $NH^{p}_m(G \curvearrowright G/P)$ and in the quotient $NH^{p}_m(G \curvearrowright G/P)/d_{p-1}((H^{p-2}_m(A))^{w_0})$, respectively. 

\subsection{Representing $H^p_m(A)^{w_0}$ in $C^{p,1}$ and $C^{p,2}$}

In Section \ref{Spectral sequence} we have established that
\begin{equation}\label{iso}H^p_m(A)^{w_0} \cong E_2^{p,1} \ \mathrm{and} \ H^p_m(A)^{w_0} \cong E_2^{p,2} .\end{equation}
%For the proof of Theorem \ref{Thm ALT} and 
For the explicit injection and section of the short exact sequence (\ref{SES}) presented below we will need to know that classes in $E_2^{p,1}$ and $E_2^{p,2}$ can be represented by cocycles in $C^{p,1}$ and $C^{p,2}$ with additional symmetries. Denote by $C^{p+1}(A)$ the cocomplex of continuous $A$-invariant cochains $A^{p+1}\rightarrow \mathbb{R}$ endowed with its homogenous boundary operator and let 
$$C^{p+1}(A)^{w_0}=\{\alpha \in C^{p+1}(A)\mid \alpha(w_0 a_0 w_0^{-1},\dots, w_0 a_p w_0^{-1})=\alpha(a_0,\dots,a_p)\}$$
the subspace of $w_0$-invariant cochains. 

The aim of this paragraph is to produce explicit maps
$$C^{p+1}(A)^{w_0} \rightarrow C^{p,1} \ \mathrm{and} \ C^{p+1}(A)^{w_0} \rightarrow C^{p,2}$$
inducing the isomorphisms (\ref{iso}).

We fix $\alpha \in C^{p+1}(A)$. Recall that in virtue of the Iwasawa decomposition, any element $g \in G$ can be written uniquely as $g=ank$, where $a \in A, n \in N, k \in K$. 
Here $N$ is the unipotent radical of the minimal parabolic subgroup and $K$ is a fixed maximal compact subgroup. The Iwasawa decomposition allows us to define a projection of $G$ onto $A$, namely
$$\begin{array}{rccl}
\pi_A:&G &\longrightarrow &A  \\
&g=ank&\longmapsto& a. 
\end{array}$$ 
Using such projection, we further define $\alpha_G=\pi_A^*(\alpha)\in L^0(G^{p+1})^P$ by
$$\alpha_G(g_0,\dots,g_p)=\alpha(\pi_A(g_0),\dots,\pi_A(g_p)).$$
Via the induction isomorphism $\alpha_G\in L^0(G^{p+1})^P\cong C^{p,1}\ni \overline{\alpha_G}$ we obtain the corresponding cocycle $\overline{\alpha_G}\in C^{p,1}$ as
$$\overline{\alpha_G}(g_0,\dots,g_p)(hP)=\alpha_G(h^{-1}g_0,\dots,h^{-1}g_p).$$
Now $\alpha_G$ can also be considered as an element in $L^0(G^{p+1})^{MA} \cong C^{p,2}$ which corresponds to the cocycle $\overline{\overline{\alpha_G}}\in C^{p,2}$ given by
$$\overline{\overline{\alpha_G}}(g_0,\dots,g_p)(hP,hw_0P)=\alpha_G(h^{-1}g_0,\dots,h^{-1}g_p).$$
The $M$-invariance of $\overline{\overline{\alpha_G}}$ is guaranteed by the fact that $M$ centralizes $A$ and $\pi_A$ is trivial on the maximal compact subgroup $K$. 

\begin{prop}
The isomorphisms in (\ref{iso}) are realized at a cochain level by the maps
$$\begin{array}{rcl}
C^{p+1}(A)^{w_0} &\longrightarrow & C^{p,1} \\
\alpha&\longmapsto & \overline{\alpha_G}
\end{array} 
\quad 
\mathrm{and}
\quad 
\begin{array}{rcl}
C^{p+1}(A)^{w_0} &\longrightarrow & C^{p,2} \\
\alpha&\longmapsto &\overline{\overline{\alpha_G}}
\end{array}, $$
respectively. 
\end{prop}

\begin{proof}
We start proving that the maps
$$\begin{array}{rcl}
C^{p+1}(A) &\longrightarrow & C^{p,1} \\
\alpha&\longmapsto & \overline{\alpha_G}
\end{array} 
\quad 
\mathrm{and}
\quad 
\begin{array}{rcl}
C^{p+1}(A) &\longrightarrow & C^{p,2} \\
\alpha&\longmapsto &\overline{\overline{\alpha_G}}
\end{array}, $$
induce the isomorphisms
\begin{equation}\label{iso first page}
H^p_m(A) \cong E_1^{p,1}  \quad \mathrm{and} \quad  \ H^p_m(A) \cong E_1^{p,2}.
\end{equation}
We are going to prove the statement only in the second case, that means for $C^{p,2}$, since for $C^{p,1}$ is analogous. The map $\alpha \mapsto \overline{\overline{\alpha_G}}$ is obtained as the following composition 
$$
\xymatrix{
C^{p+1}(A)  \ar[r]^{\hspace{-15pt} \iota} & L^0(A^{p+1})^A \ar[r]^{\pi_A^*} & L^0(G^{p+1})^{MA} \ar[r]^{\hspace{-20pt} \Phi} & L^0(G^{p+1},L^0(G/MA))^G.
}
$$
The function $\iota$ is the usual inclusion of cochains and it induces an isomorphism in cohomology by Austin and Moore \cite[Theorem A]{AM}. The pullback $\pi_A^*$ induces at a cohomological level the same map induced by the projection $MA \rightarrow A$, thus it is an isomorphism because $M$ is compact and centralizes $A$. The last map is an isomorphism already on cochains and it corresponds to the Eckmann-Shapiro induction, as noticed by Monod \cite[Proposition 5.1]{Monod}. Being the composition of isomorphisms in cohomology, the map $\alpha \mapsto \overline{\overline{\alpha_G}}$ induces an isomorphism as well and the claim follows. 

The explicit computation of the coboundary $d_1$  on the first page (see Section \ref{Spectral sequence}) implies that the isomorphisms \eqref{iso first page} induce the ones of Equation \eqref{iso}, and the statement is proved. 
\end{proof}

\subsection{Homotopies for $(C_K^{*,q},d^\uparrow)$ } \label{contracting homotopies} \label{homotopies}

In order to follow at cochain level the differentials realizing the injection and a section of the short exact sequence (\ref{SES}), we would ideally want contracting homotopies for the cocomplex  $(C^{*,q},d^\uparrow)$, for $q\geq 3$, whose cohomology vanishes by \cite[Proposition 5.1]{Monod}. It is however not so clear how to construct such homotopies even in the two cases we consider in this paper (for $G$ a product of isometries of hyperbolic spaces or $\mathrm{SL}(3,\mathbb{K})$). We will see that it will be enough to construct such maps on the subcocomplex of right-$K$-invariant cochains, which we now define:

 \begin{dfn} We say that a cochain $\alpha\in C^{p,q}$ is \emph{right-$K$-invariant} if
 $$\alpha(g_0,\dots,g_p)(x_1,\dots ,x_q)=\alpha(g_0k_0,\dots,g_pk_p)(x_1,\dots ,x_q)$$
 for almost every $g_0,\dots,g_p\in G$, $x_1,\dots,x_q\in G/P$ and $k_0,\dots,k_p\in K$. We define $C_K^{p,q}\subset C^{p,q}$ as the subset of right-$K$-invariant cochains. 
 \end{dfn}

\begin{rem}
It is worth noticing that the cochains $\overline{\alpha_G}$ and $\overline{\overline{\alpha_G}}$ constructed in the previous section are naturally right-$K$-invariant, since they are defined via induction of the pullback along the projection $\pi_A$ and the latter is trivial on $K$. 
\end{rem}
 
It is clear that, if $\alpha\in C_K^{p,q}$, then both $d^\rightarrow \alpha$ and $d^\uparrow \alpha$ are right-$K$-invariant. In particular $(C_K^{p,q},d^\uparrow)$ forms a cocomplex. It is easy to exhibit contracting homotopies 
$$h^{p,q}:C^{p,q}_K\longrightarrow C^{p-1,q}_K$$
for the cocomplex $(C^{p,q}_K,d^\uparrow)$, i.e. maps satisfying
\begin{equation}\label{homotopy}
h^{p+1,q}d^\uparrow + d^\uparrow h^{p,q}=\mathrm{Id}.
\end{equation} 
This could be done in general with the help of a measurable section of the projection 
$$
\xymatrix{
(G/P)^q\ar[rr] && G\backslash (G/P)^q \ar@/^1pc/[ll]_{\sigma_q}
}
$$
with the additional property that, for $q\geq 3$, the stabilizer of $\sigma(x_1,\dots,x_q)$ is contained in $K$ for almost every $q$-tuple in the image of $\sigma_q$. In this paper, we follow a different strategy, and we will exhibit contracting homotopies exploiting the transitivity of $G$ on tuples of points in $G/P$. This will be done in Section \ref{homo for products} for $G$ a product of isometries of hyperbolic spaces, which acts $3$-transitively on $G/P$, and in Section \ref{SL3R4} for $G=\mathrm{SL}(3,\mathbb{K})$, which acts more than $2$-transitively on $G/P$, meaning that it acts transitively on generic points in $G/P\times G/P\times P^1(\mathbb{K})$. Indeed recall that in this case $G/P$ is the space of flags in $\mathbb{K}^3$ and $ P^1(\mathbb{K})$ parametrizes the points in the flags.

\subsection{Explicit expression for the injection $i$ of the short exact sequence \ref{SES}}

\begin{lem} \label{Injection} Let $\{h^{*,q}\}$ be a family of contracting homotopies for $(C^{*,q}_K,d^\uparrow)$. The injection
$$i:H^{p-2}_m(A)^{w_0}\cong H^{p-2}(C^{*,2},d^\uparrow) \hookrightarrow NH^p_m(G\curvearrowright G/P)$$
induced by $d_{p-1}$ is given explicitly at cochain level as
$$\begin{array}{rcl}
C^{p-1}(A)^{w_0}&\longrightarrow & L^0((G/P)^{p+1})^G\\
  \alpha &\longmapsto & d^\rightarrow h^{1,p} d^\rightarrow h^{2,p-1} \dots d^\rightarrow h^{p-2,3} d^\rightarrow (\overline{\overline{\alpha_G}})(e).
\end{array}$$
\end{lem}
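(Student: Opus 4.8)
The plan is to apply the general recipe for the higher differentials of the spectral sequence (\ref{E}), recalled above, with $k=p-1$ to the cocycle $\overline{\overline{\alpha_G}}$ representing the class that corresponds to $\alpha$ under the isomorphism (\ref{iso}), and to solve the resulting chain of equations $d^\rightarrow\alpha_{i-1}=d^\uparrow\alpha_i$ step by step using the contracting homotopies $h^{*,q}$.

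First I would fix a cocycle $\alpha\in C^{p-2}(A)^{w_0}$ and set $\alpha_0:=\overline{\overline{\alpha_G}}\in C^{p-2,2}$. A direct computation on the $G$-variables shows that $d^\uparrow\overline{\overline{\alpha_G}}=\overline{\overline{(d\alpha)_G}}$, so $\alpha_0$ is a $d^\uparrow$-cocycle representing the class of $\alpha$ in $E_2^{p-2,2}\cong H^{p-2}_m(A)^{w_0}$; moreover $\alpha_0$ is right-$K$-invariant by the remark following the definition of $C_K^{*,q}$. Since $\alpha$ is $w_0$-invariant, its class lies in the kernel of $d_1$, and since each potential target $E_k^{p-1-k,\,2+k}$ for $2\le k\le p-2$ sits in a column $\ge 3$ off the bottom row and is therefore zero on $E_2$, the class survives to page $E_{p-1}$ and $d_{p-1}[\alpha_0]$ is defined.

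Next I would construct the sequence $\alpha_0,\alpha_1,\dots,\alpha_{p-2}$ with $\alpha_i\in C^{p-2-i,\,2+i}$ by the inductive rule
$$\alpha_i:=h^{p-1-i,\,2+i}\big(d^\rightarrow\alpha_{i-1}\big),\qquad 1\le i\le p-2.$$
The key point is that each $\beta_i:=d^\rightarrow\alpha_{i-1}$ is a $d^\uparrow$-cocycle: for $i=1$ this is $d^\uparrow d^\rightarrow\alpha_0=d^\rightarrow d^\uparrow\alpha_0=0$, and for $i\ge 2$ it follows from the previously established relation $d^\uparrow\alpha_{i-1}=d^\rightarrow\alpha_{i-2}$ together with $d^\rightarrow d^\rightarrow=0$ and the commutativity $d^\uparrow d^\rightarrow=d^\rightarrow d^\uparrow$. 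Each $\beta_i$ is right-$K$-invariant and lies in $C^{p-1-i,\,2+i}$ with second index $2+i\ge 3$, so the homotopy $h^{p-1-i,\,2+i}$ of (\ref{homotopy}) applies; evaluating (\ref{homotopy}) on $\beta_i$ and using $d^\uparrow\beta_i=0$ gives exactly $d^\uparrow\alpha_i=\beta_i=d^\rightarrow\alpha_{i-1}$, so the $\alpha_i$ meet the hypotheses of the recipe.

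By the recipe, $d_{p-1}[\alpha_0]$ is then represented by $d^\rightarrow\alpha_{p-2}\in C^{0,p+1}=L^0(G,L^0((G/P)^{p+1}))^G$. Unrolling the recursion yields $d^\rightarrow\alpha_{p-2}=d^\rightarrow h^{1,p}d^\rightarrow h^{2,p-1}\dots d^\rightarrow h^{p-2,3}d^\rightarrow\alpha_0$, and since a $G$-invariant element of $C^{0,p+1}$ is determined by its value at $e\in G$, evaluating at $e$ gives the claimed element of $L^0((G/P)^{p+1})^G$. That this class lands in $NH^p_m(G\curvearrowright G/P)$ and that $d_{p-1}$ induces the injection $i$ is precisely the content of the Conclusion of Section \ref{Spectral sequence}. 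The main obstacle is the bookkeeping: at every step one must confirm that all homotopy indices stay in the range $q\ge 3$ where (\ref{homotopy}) is available, and that right-$K$-invariance is preserved by both $d^\rightarrow$ and $h$, so that the entire computation remains inside the subcomplex $(C_K^{*,q},d^\uparrow)$ on which the contracting homotopies are defined.
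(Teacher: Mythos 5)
Your proposal is correct and follows essentially the same route as the paper's proof: the same inductive zig-zag $\alpha_i := h(d^\rightarrow\alpha_{i-1})$, with the homotopy identity (\ref{homotopy}) plus the commutation $d^\uparrow d^\rightarrow = d^\rightarrow d^\uparrow$ and $d^\rightarrow d^\rightarrow = 0$ yielding $d^\uparrow\alpha_i = d^\rightarrow\alpha_{i-1}$ by induction, all carried out inside the right-$K$-invariant subcomplex (your indices are just shifted by $2$ relative to the paper's $\alpha_2,\dots,\alpha_p$). The only cosmetic difference is your added bookkeeping that the class survives to page $E_{p-1}$, which the paper handles once and for all in the Conclusion of Section \ref{Spectral sequence} rather than inside the proof of the lemma.
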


%$$\varphi_2: \mathcal{C}(A^{p+1})^A\longrightarrow C^{p,2}$$

Note that the evaluation on $e$ is well-defined since an almost everywhere defined $G$-invariant function on $G$ is actually defined everywhere. Observe also that this map is defined on all cocycles, but a $w_0$-equivariant cocycle is in the image of the horizontal differential $d^\rightarrow$ and hence will be mapped to $0$ by $d^\rightarrow$ and this composition. 

\begin{proof} We only need to check that the sequence 
\begin{eqnarray*}
\alpha_2&:=&\overline{\overline{\alpha_G}} \in C^{p,2}_K,\\
\alpha_k&:=&h^{ p-k+1,k}d^\rightarrow (\alpha_{k-1}) \in C^{p-k,k}_K,
\end{eqnarray*}
 for $3 \leq k\leq p$, satisfies
\begin{equation}\label{dalpha=dalpha} d^\rightarrow \alpha_{k-1}= d^\uparrow \alpha_{k}.\end{equation} 
We will prove inductively that if $d^\uparrow d^\rightarrow \alpha_{k-1}=0$, then (\ref{dalpha=dalpha}) holds and $d^\uparrow d^\rightarrow \alpha_{k}=0$. As $\alpha_2$ is a cocycle for $d^\uparrow$, we have $d^\uparrow d^\rightarrow \alpha_{2}= d^\rightarrow d^\uparrow \alpha_{2}=0$. Suppose that $d^\rightarrow d^\uparrow\alpha_{k-1}=0$. We compute
\begin{eqnarray*}
d^\uparrow \alpha_{k}&=& d^\uparrow h^{ p-k+1,k+1}d^\rightarrow (\alpha_{k-1})\\
&=& (\mathrm{Id} - h^{p-k+2  ,k+1}d^\uparrow)d^\rightarrow (\alpha_{k-1})\\
&=& d^\rightarrow (\alpha_{k-1}),
\end{eqnarray*}
where we have used that $h^{p-k+2,k+1}d^\uparrow + d^\uparrow h^{p-k+1,k+1}=\mathrm{Id}$ and the assumption that $d^\rightarrow d^\uparrow (\alpha_{k-1})=0$. Furthermore 
$$
d^\uparrow d^\rightarrow \alpha_{k}=d^\rightarrow (d^\uparrow\alpha_{k})=d^\rightarrow (d^\rightarrow \alpha_{k-1})=0.
$$
It follows that the $d^\uparrow$-cocycle  $d^\rightarrow \alpha_p\in C^{0,p}$ represents the image of $[\alpha]$ under the differential $d_{p-1}$, which is, as an element in $L^0((G/P)^{p+1})^G$, given by evaluating the $G$-invariant cocycle  $d^\rightarrow \alpha_p$ on $e$ (or any other element of $G$). 
\end{proof}

\subsection{Explicit expression for a section $s$ of the short exact sequence (\ref{SES})}
We follow here the same strategy as for the injection in the previous paragraph. We only need an additional argument for the first step of the differential. We define 
$$ \mathcal{H}^{p-1}: C^{p-1}(A)^{w_0} \longrightarrow L^0(G^{p-1})^A\cong C^{p-2,2}$$
by    
$$\mathcal{H}^{p-1}(\overline{\alpha_G})(g_0,\dots,g_{p-2}):=$$
\vspace{-15pt}
$$(-1)^p  \sum_{i=0}^{p-2} (-1)^i \alpha_G(w_0^{-1} \pi_A(g_0),\dots ,w_0^{-1} \pi_A(g_i), w_0^{-1} g_i,\dots,w_0^{-1} g_{p-2}).$$

 \begin{lem}\label{induction} For any $\alpha\in C^{p-1}(A)^{w_0}$,  we have
 $$d^\uparrow \mathcal{H}^{p-1}(\overline{\alpha_G})=d^\rightarrow(\overline{\alpha_G}).$$
 Furthermore, $\mathcal{H}^{p-1}(\overline{\alpha_G})$ is right-$K$-invariant. 
 \end{lem}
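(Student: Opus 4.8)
The plan is to recognise $\mathcal{H}^{p-1}$ as a signed Eilenberg--MacLane prism homotopy operator interpolating between two cochain maps, and then to combine the classical homotopy formula with the $w_0$-invariance of $\alpha$ and the idempotency of $\pi_A$.

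First I would introduce the two ``edge maps'' $\phi,\psi\colon G\to A$ given by $\phi(g):=\pi_A(w_0^{-1}g)$ and $\psi(g):=\pi_A(w_0^{-1}\pi_A(g))$, and write $\phi^\ast\alpha$, $\psi^\ast\alpha$ for the cochains $(g_0,\dots,g_{p-1})\mapsto\alpha(\phi(g_0),\dots,\phi(g_{p-1}))$ and $(g_0,\dots,g_{p-1})\mapsto\alpha(\psi(g_0),\dots,\psi(g_{p-1}))$. Since $\alpha_G=\pi_A^\ast\alpha$, each summand of $\mathcal{H}^{p-1}(\overline{\alpha_G})$ is $\alpha(\psi(g_0),\dots,\psi(g_i),\phi(g_i),\dots,\phi(g_{p-2}))$, so that $\mathcal{H}^{p-1}(\overline{\alpha_G})=(-1)^p h_\alpha$, where $h_\alpha:=\sum_{i=0}^{p-2}(-1)^i\alpha(\psi(g_0),\dots,\psi(g_i),\phi(g_i),\dots,\phi(g_{p-2}))$ is the standard prism operator built from $\phi$ and $\psi$. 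Next I would simplify the two ``constant'' ends of the prism. As $w_0$ may be chosen in $K$ and normalises $A$, the Iwasawa decomposition gives $\psi(g)=w_0^{-1}\pi_A(g)w_0$, whence the $w_0$-invariance of $\alpha$ yields
$$\psi^\ast\alpha(g_0,\dots,g_{p-1})=\alpha(\pi_A(g_0),\dots,\pi_A(g_{p-1}))=\alpha_G(g_0,\dots,g_{p-1}),$$
while $\phi^\ast\alpha(g_0,\dots,g_{p-1})=\alpha_G(w_0^{-1}g_0,\dots,w_0^{-1}g_{p-1})$. Comparing with the explicit formula (\ref{d from P to A}) for $d^\rightarrow$ in degree $p-1$ identifies the right-hand side of the lemma as
$$d^\rightarrow(\overline{\alpha_G})=(-1)^p\bigl(\phi^\ast\alpha-\psi^\ast\alpha\bigr).$$

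The core of the proof is then the cochain homotopy identity
$$d^\uparrow h_\alpha+h_{d\alpha}=\phi^\ast\alpha-\psi^\ast\alpha,$$
which I would establish by expanding the homogeneous differential $d^\uparrow$ on $h_\alpha$ and telescoping: for each deleted index $j$ and each summand $i$, the interior contributions cancel in pairs, the two occurrences of a given $g_m$ (as $\psi(g_m)$ in slot $m$ and as $\phi(g_m)$ in slot $m+1$) collapsing against the neighbouring summand, so that only the two extreme contributions $\phi^\ast\alpha$ and $\psi^\ast\alpha$ survive. Since $\alpha$ represents a class in $H^{p-1}_m(A)^{w_0}$ it is a cocycle, i.e. $d\alpha=0$; equivalently $d^\uparrow\overline{\alpha_G}=\overline{(d\alpha)_G}=0$. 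Hence the correction term $h_{d\alpha}$ vanishes, and multiplying the homotopy identity by $(-1)^p$ gives $d^\uparrow\mathcal{H}^{p-1}(\overline{\alpha_G})=d^\rightarrow(\overline{\alpha_G})$. I expect the bookkeeping of signs to be the main technical obstacle: matching the prefactor $(-1)^p$ and the internal signs $(-1)^i$ against the signs produced by $d^\uparrow$, and checking that the pairwise cancellation is exact. The cleanest way to control this is to verify the simplicial homotopy identities for the degeneracy/face maps underlying $h_\alpha$ rather than to expand everything by hand. I would also flag that the hypothesis $d\alpha=0$ is genuinely needed (the correction $h_{d\alpha}$ does not vanish for non-cocyclic $\alpha$), which is harmless since $\mathcal{H}^{p-1}$ is only ever applied to representatives of classes in $H^{p-1}_m(A)^{w_0}$.

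Finally, the right-$K$-invariance is immediate and independent of the above: each $g_m$ enters $\mathcal{H}^{p-1}(\overline{\alpha_G})$ only through $\pi_A(g_m)$ (in the projected slots) or through $\pi_A(w_0^{-1}g_m)$ (in the unprojected slots), and both expressions are unchanged under $g_m\mapsto g_mk_m$ with $k_m\in K$, because $\pi_A(gk)=\pi_A(g)$ for all $k\in K$. Consequently every summand, and hence $\mathcal{H}^{p-1}(\overline{\alpha_G})$ itself, lies in $C^{p-2,2}_K$.
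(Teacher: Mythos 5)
Your proposal is correct and follows essentially the same route as the paper: the paper's ``standard computation, based on the fact that $\alpha_G$ is a cocycle'' is precisely your prism/telescoping homotopy identity, and its final identification $\alpha_G(w_0^{-1}\pi_A(g_0),\dots,w_0^{-1}\pi_A(g_{p-1}))=\alpha_G(g_0,\dots,g_{p-1})$ is exactly your computation $\psi^\ast\alpha=\alpha_G$ using $w_0\in K$, the fact that $w_0$ normalises $A$, and the $w_0$-invariance of $\alpha$. Your remark that the hypothesis $d\alpha=0$ is genuinely needed is also consistent with the paper, whose proof invokes the cocycle property even though the lemma is stated ``for any $\alpha\in C^{p-1}(A)^{w_0}$''.
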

 
 \begin{proof} The fact that $\mathcal{H}^{p-1}(\overline{\alpha_G})$ is right-$K$-invariant is obvious, since it is defined as an alternating sum of evaluations of $\alpha_G$.

It is a standard computation, based on the fact that $\alpha_G$ is a cocycle, to check that
 \begin{eqnarray*}
 d^\uparrow \mathcal{H}^{p-1}(\overline{\alpha_G})(g_0,\dots,g_{p-1})&=&(-1)^p\left(\alpha_G(w_0^{-1}g_0,\dots,w_0^{-1}g_{p-1})\right.\\
 && \left. -\alpha_G(w_0^{-1} \pi_A(g_0),\dots ,w_0^{-1} \pi_A(g_{p-1}))\right).
 \end{eqnarray*}
Now it remains to see that the latter summand is precisely $\alpha_G(g_0,\dots,g_{p-1})$, implying, by the explicit description of the differential $d^\rightarrow$ given in (\ref{d from P to A}) that $d^\uparrow \mathcal{H}^{p-1}(\overline{\alpha_G})=d^\rightarrow(\overline{\alpha_G})$. To see this, write $a_i=\pi_A(g_i)$, so that we need to check that
 $$\alpha_G(w_0^{-1}a_0,\dots,w_0^{-1} a_{p-1})=\alpha(a_0,\dots,a_{p-1}), $$
 but
 \begin{eqnarray*}
 \alpha_G(w_0^{-1}a_0,\dots,w^{-1}_0 a_{p-1})&=&\alpha(\pi_A(w_0^{-1}a_0),\dots, \pi_A(w_0^{-1}a_{p-1}))\\
 &=&\alpha(\pi_A(w_0^{-1}a_0 w_0),\dots, \pi_A(w_0^{-1}a_{p-1}w_0))\\
 &=&\alpha(w_0^{-1}a_0w_0,\dots,w_0^{-1}a_{p-1}w_0)\\
 &=&\alpha(a_0,\dots,a_{p-1}),
 \end{eqnarray*}
 by the $w_0$-invariance of $\alpha$. 
  \end{proof}

\begin{prop}\label{Section} For any $q\geq 3$, let $h^{p,q}:C^{p,q}_K\rightarrow  C^{p-1,q}_K$ be contracting homotopies for the cocomplexes $(C^{p,q}_K,d^\uparrow)$. A section of the projection in the short exact sequence from (\ref{SES}),
$$ H^{p-1}_m(A)^{w_0} \longrightarrow NH^p_m(G\curvearrowright G/P),$$
is given explicitly at cocycle level by sending a cocycle $\alpha\in C^{p}(A)^{w_0}$ to 
$$d^\rightarrow h^{1,p} d^\rightarrow h^{2,p-1} d^\rightarrow \dots d^\rightarrow h^{p-2,3} d^\rightarrow \mathcal{H}^{p-1} (\overline{\alpha_G})(e)\in L^0((G/P)^{p+1})^G.$$
\end{prop}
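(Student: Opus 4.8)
The plan is to follow verbatim the strategy used in the proof of Lemma \ref{Injection}, the only genuine novelty being the very first step of the chain. Indeed, when computing the section we start one column to the left, in $C^{p-1,1}$, so that the first application of $d^\rightarrow$ lands in column $2$, where no contracting homotopy for $d^\uparrow$ is available (the cohomology of $(C^{*,2},d^\uparrow)$ is $H^*_m(A)\neq 0$). This passage from column $1$ to column $2$ is exactly what the explicit map $\mathcal{H}^{p-1}$ provides, via Lemma \ref{induction}.

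Concretely, I would build the sequence realizing the differential $d_p\colon E_p^{p-1,1}\to E_p^{0,p+1}$ by setting
$$\alpha_0:=\overline{\alpha_G}\in C^{p-1,1}_K,\qquad \alpha_1:=\mathcal{H}^{p-1}(\overline{\alpha_G})\in C^{p-2,2}_K,$$
$$\alpha_k:=h^{p-k,k+1}\,d^\rightarrow\alpha_{k-1}\in C^{p-1-k,k+1}_K\qquad (2\le k\le p-1).$$
Since $\alpha$ is a cocycle, $\alpha_0$ is a $d^\uparrow$-cocycle representing the class attached to $\alpha$ under the isomorphism \eqref{iso}, and by the Remark on $\overline{\alpha_G}$ together with Lemma \ref{induction} the cochains $\alpha_0,\alpha_1$ are right-$K$-invariant; as $d^\rightarrow$ and the $h^{*,*}$ preserve $C^{*,*}_K$, every $\alpha_k$ lies in the right-$K$-invariant subcomplex, so that the homotopies may indeed be applied.

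The heart of the verification is the chain of relations $d^\rightarrow\alpha_{k-1}=d^\uparrow\alpha_k$. The case $k=1$ is precisely Lemma \ref{induction}. For $k\ge 2$ I would argue by induction exactly as in Lemma \ref{Injection}: from $d^\uparrow\alpha_{k-1}=d^\rightarrow\alpha_{k-2}$ one gets $d^\rightarrow d^\uparrow\alpha_{k-1}=0$, hence $d^\uparrow d^\rightarrow\alpha_{k-1}=0$, and then the homotopy identity \eqref{homotopy} gives
$$d^\uparrow\alpha_k=\bigl(\mathrm{Id}-h^{p-k+1,k+1}d^\uparrow\bigr)d^\rightarrow\alpha_{k-1}=d^\rightarrow\alpha_{k-1}.$$
By the recipe for the higher differentials recalled in Section \ref{Spectral sequence}, $d^\rightarrow\alpha_{p-1}\in C^{0,p+1}$ then represents $d_p([\alpha])$; it is both a $d^\uparrow$-cocycle and a $d^\rightarrow$-cocycle, so its value at $e$ is a well-defined $G$-invariant $\delta$-cocycle in $L^0((G/P)^{p+1})^G$, which is the asserted formula.

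Finally, to see that this is a section I would appeal to the structure extracted in the Conclusion of Section \ref{Spectral sequence}: $d_{p-1}$ realizes the inclusion $i$ of \eqref{SES}, while $d_p$ induces an isomorphism $H^{p-1}_m(A)^{w_0}\xrightarrow{\cong}NH^p_m(G\curvearrowright G/P)/d_{p-1}\bigl((\wedge^{p-2}\mathfrak{a}^*)^{w_0}\bigr)$ which is inverse to the projection in \eqref{SES}, the image of $d_p$ lying in $\ker(ev)=NH^p_m(G\curvearrowright G/P)$. Having exhibited a representative of $d_p([\alpha])$, composing our map with the projection of \eqref{SES} returns $[\alpha]$, so the map is a section. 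The main obstacle, and the only point where the argument departs from that of Lemma \ref{Injection}, is the column-$1$-to-column-$2$ step: the failure of $d^\uparrow$-acyclicity in column $2$ forces us to replace a contracting homotopy by the explicit primitive $\mathcal{H}^{p-1}$, whose defining property $d^\uparrow\mathcal{H}^{p-1}(\overline{\alpha_G})=d^\rightarrow\overline{\alpha_G}$ and right-$K$-invariance (Lemma \ref{induction}) rest in turn on the explicit form \eqref{d from P to A} of $d^\rightarrow$ on column $1$ and on the $w_0$-invariance of $\alpha$.
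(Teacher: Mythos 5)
Your proposal is correct and takes essentially the same route as the paper, whose entire proof is the single sentence that the argument is identical to that of Lemma \ref{Injection} except that the first induction step is supplied by Lemma \ref{induction}; your write-up is a faithful, more detailed unwinding of exactly this, with the chain $\alpha_0=\overline{\alpha_G}$, $\alpha_1=\mathcal{H}^{p-1}(\overline{\alpha_G})$, $\alpha_k=h^{p-k,k+1}d^\rightarrow\alpha_{k-1}$ correctly indexed so that $d^\rightarrow\alpha_{p-1}\in C^{0,p+1}$ represents $d_p([\alpha])$. The concluding appeal to the Conclusion of Section \ref{Spectral sequence} (that $d_p$ inverts the projection in (\ref{SES}) onto $H^{p-1}_m(A)^{w_0}$, hence the map is a section) is likewise the justification implicit in the paper.
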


\begin{proof} The proof is identical to the proof of Lemma \ref{Injection} except for the first induction step which is given by Lemma \ref{induction}.
\end{proof}

\section{The spectral sequence for products of isometries of real hyperbolic spaces}\label{SS for prod of real hyp}\label{Section: Prod of hyp}

\subsection{Isometries of $\mathbb{H}^n$}\label{Isom of Hn}\label{Isom of Hn}

As before, we take the upper half space model $\{x\in \mathbb{R}^n \mid x_n>0\}$ for $\mathbb{H}^n$.  In this model the isometry group $G=\mathrm{Isom}^+(\mathbb{H}^n)$ is identified with the conformal transformations of $\mathbb{R}^n \cup \{\infty\}$  which leave the upper half space, and hence the boundary $\partial \mathbb{H}^n=\mathbb{R}^{n-1}\times \{0\} \cup \{\infty\}$, invariant. We take for maximal compact subgroup $K$ the stabilizer of $e_n=(0,\dots,0,1)$. The minimal parabolic subgroup $P=\mathrm{Stab}(\infty)$ is given by the similarities of $\mathbb{R}^{n-1}$ (which naturally extend to te upper half space). More precisely, $P=ANM$, where $A$ consists of the homoteties $a_\lambda:x\mapsto \lambda x$, for $\lambda>0$, $N$ consists of translations $n_v:x\mapsto x+v$ by vectors $v$ in $\mathbb{R}^{n-1}\times \{0\}$ and $M=P\cap K=\mathrm{SO}(n-1)$  are orthogonal transformations (extending to orthogonal transformations of $\mathbb{R}^n$ which restrict to $\mathbb{H}^n$). We will need an explicit description of a representative of the longest element $w_0\in K$, which  can be taken as the composition of the inversion with the unit sphere centered at the origin and the reflexion by the hyperplane orthogonal to $e_1=(1,0,\dots,0)$ and thus takes the form
\begin{equation}\label{w0}\begin{array}{rccl}
w_0:&\mathbb{R}^n&\longrightarrow & \mathbb{R}^n\\
   &x&\longmapsto & \frac{x-2\langle x,e_1\rangle e_1}{\|x\|^2}.
   \end{array}\end{equation}
   
We record the following commutativity rules:
\begin{equation}\label{eq:commutativity:law}
a_\lambda n_v a_{\frac{1}{\lambda}}=n_{\lambda v} , \ \ a_{\lambda} \rho = \rho a_{\lambda} , \ \ \rho n_v \rho^{-1}=n_{\rho v} ,
\end{equation}
for $a_\lambda \in A, n_v \in N$ and $\rho \in M$. Additionally, setting $n:=n_{e_1}$, we observe the relation
\begin{equation}\label{6 terms rel}
w_0^{-1}nw_0^{-1}=n^{-1}w_0^{-1}n^{-1}.\end{equation}

By the Iwasawa decomposition, we have a diffeomorphism between $G$ and $ANK$, which allows us to define the projection 
$$\begin{array}{rccl}
\pi_A:&G &\longrightarrow &A\\
&g=ank&\longmapsto &a. 
\end{array}$$

The following simple lemma will allow us  to compute the $\pi_A$-projection of any element of $G$: 

\begin{lem}\label{lem:projection:last:coordinate} For any $g\in G$, its $\pi_A$-projection is
$$\pi_A(g)=a_{\lambda},
$$
where 
$$
 (0,\infty)\ni \lambda=\textup{the last coordinate of the vector $g(e_n)$} .
$$
\end{lem}

\begin{proof}
Since $K$ stabilizes $e_n$, only $N$ and $A$ act non trivially on that point. In particular, if $g=a_\lambda n_v \rho$, for $\rho\in K$, $v\in \mathbb{R}^{n-1}\times \{0\}$ and $\lambda>0$, then 
$$
g(e_n)=a_\lambda n_v \rho(e_n)=a_\lambda n_v (e_n)=a_\lambda(v+e_n)=\lambda v+\lambda e_n, 
$$
which proves the lemma since the last coordinate of $v$ is $0$. \end{proof}

We apply this lemma to compute a collection of $\pi_A$-projections needed in the proofs of Theorems \ref{Theorem 2 to 3 for prod of Hn} and \ref{Theorem 2 to 4 for prod of Hn} to some specific elements in $G$ which we now define. For $x\in \partial \mathbb{H}^n\setminus \{\infty,0\}$, denote by $\widehat{x}\in \partial \mathbb{H}^n$ the normalized element
$$
\widehat{x}=\frac{x}{\|x\|}.
$$
In case $n=2$, we further assume that $0<x<\infty$. Now choose a (non unique in dimension $n\geq 3$) isometry $\rho_{\widehat{x}} \in M$ sending $e_1$ to $\widehat{x}$. Set 
\begin{align}\label{eq:three:group:elements}
g_{\infty0x}&:=a_{\|x\|}\rho_{\widehat{x}}  , \\ 
g_{\infty e_1x}&:=na_{\|x-e_1\|}\rho_{\widehat{x-e_1}} , \nonumber \\
g_{0 e_1x}&:=w_0n^{-1}a_{\|e_1+w_0(x)\|}\rho_{\widehat{e_1+w_0(x)}} \nonumber  .
\end{align}
These isometries are so defined as to send the three points $\infty,0,e_1$ to the three points listed as indices. For example, $g_{0 e_1x} (\infty)=0    , g_{0 e_1x} (0)=e_1    , g_{0 e_1x}(e_1)=x$.

The next proposition will be needed in the proof of Theorem \ref{Theorem 2 to 3 for prod of Hn}.

\begin{prop}\label{prop: pi A projections} For $x\in \partial \mathbb{H}^n\setminus \{\infty,0\}$ and $g_{\infty0x},g_{\infty e_1x},g_{0 e_1x}$ as above we have:
$$
\pi_A(g_{0 e_1x}^{-1})=a_{\frac{\|x\|}{\|x-e_1\|}}, \ \ \pi_A(w_0^{-1}g_{0e_1x}^{-1})=a_{\frac{\|x-e_1\|}{2\|x\|}} , \ \ \pi_A(w_0^{-1}n^{-1}g_{0e_1x}^{-1})  =a_{\frac{\|x\|\|x-e_1\|}{1+\|x\|^2}},
$$
$$
\pi_A(g_{\infty e_1  x}^{-1})=a_{\frac{1}{\|x-e_1\|}}, \ \ \pi_A(w_0^{-1}g_{\infty e_1x}^{-1})=a_{\frac{\|x-e_1\|}{2}} , \ \ \pi_A(w_0^{-1}n^{-1}g_{\infty e_1x}^{-1})=a_{\frac{\|x-e_1\|}{1+\|x\|^2}} ,
$$
$$
\pi_A(g_{\infty0x}^{-1})=a_{\frac{1}{\|x\|}} , \ \ \pi_A(w_0^{-1}g_{\infty0x}^{-1})=a_{\|x\|}  , \ \ \pi_A(w_0^{-1}n^{-1}g_{\infty0x}^{-1})=a_{\frac{\|x\|}{1+\|x\|^2}}  .
$$
\end{prop}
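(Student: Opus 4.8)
The plan is to reduce all nine computations to one mechanism, Lemma~\ref{lem:projection:last:coordinate}, which gives $\pi_A(h)=a_\lambda$ with $\lambda$ the last coordinate of $h(e_n)$. For each of the three base elements $g\in\{g_{\infty0x},\,g_{\infty e_1x},\,g_{0e_1x}\}$ from \eqref{eq:three:group:elements} I would first compute the single vector $v:=g^{-1}(e_n)$ and then read off the three required projections from $v$ alone. Indeed, the explicit form \eqref{w0} shows that the last coordinate of $w_0(z)$ equals $z_n/\|z\|^2$ for every $z$, while $w_0$ fixes $e_n$ (since $e_n\perp e_1$ and $\|e_n\|=1$), the factor $M=\mathrm{O}(n-1)$ fixes $e_n$ and preserves norms, and $n=n_{e_1}$ translates by a vector with vanishing last coordinate. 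Hence the three entries of each row of the statement are exactly $v_n$, $v_n/\|v\|^2$ and $v_n/\|v-e_1\|^2$ (using $n^{-1}v=v-e_1$, which has the same last coordinate as $v$). The whole proposition thus reduces to recording, for each base element, the three scalars $v_n$, $\|v\|^2$ and $\|v-e_1\|^2$.

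Next I would carry out these evaluations. Inverting the products in \eqref{eq:three:group:elements} and applying the factors to $e_n$ one at a time is elementary: homotheties rescale, translations shift, $w_0$ fixes $e_n$, and the orthogonal factor $\rho_{\widehat{\cdot}}\in M$ fixes $e_n$ while sending $e_1$ to the prescribed unit vector; this avoids any use of the commutation rules \eqref{eq:commutativity:law} and \eqref{6 terms rel}. In every case $v$ comes out as $\tfrac1{\mu}(e_n\pm\rho^{-1}(e_1))$ for the appropriate sign and scale $\mu$, with $e_n\perp\rho^{-1}(e_1)$ and $\|\rho^{-1}(e_1)\|=1$. One point worth isolating is that, although $\rho_{\widehat{\cdot}}$ is not unique when $n\geq3$, the only scalar attached to it that enters the computation is $\langle\rho^{-1}(e_1),e_1\rangle=\langle e_1,\rho(e_1)\rangle$, which is fixed by the defining property of $\rho$; this is exactly what guarantees that all nine projections are well defined independently of the chosen representative.

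The main obstacle, though still routine, will be reconciling the norms coming from the $w_0$-twisted element $g_{0e_1x}$ with the clean expressions in the statement. The key identity is $\|e_1+w_0(x)\|^2=\|x-e_1\|^2/\|x\|^2$, obtained by writing $e_1+w_0(x)=\bigl(x+(\|x\|^2-2\langle x,e_1\rangle)e_1\bigr)/\|x\|^2$, expanding the squared norm of the numerator, and collecting terms in $\langle x,e_1\rangle$ and $\|x\|^2$. Together with the analogous expansions giving $\|v-e_1\|^2=(1+\|x\|^2)/\|x-e_1\|^2$ for both $g_{\infty e_1x}$ and $g_{0e_1x}$ (where one substitutes the cross term $\langle\rho^{-1}(e_1),e_1\rangle$ and watches the dependence on $\langle x,e_1\rangle$ cancel), these identities turn the raw quantities $v_n$, $v_n/\|v\|^2$, $v_n/\|v-e_1\|^2$ into the nine listed values. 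I would organize the verification row by row, starting with $g_{\infty0x}$ (where $v=\tfrac1{\|x\|}e_n$ makes everything immediate), then $g_{\infty e_1x}$, and finally $g_{0e_1x}$.
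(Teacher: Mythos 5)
Your proposal is correct, and it takes a genuinely more unified route than the paper. The paper proves only the first (hardest) row: it massages $g_{0e_1x}^{-1}$ into explicit Iwasawa form $ank$ using the commutation rules \eqref{eq:commutativity:law}, reads off the $A$-part directly for the first projection, and only then invokes Lemma \ref{lem:projection:last:coordinate} (together with $w_0^{-1}aw_0=a^{-1}$ and the formula $\pi_A(w_0^{-1}n_v)=a_{1/(1+\|v\|^2)}$) for the two twisted projections; the remaining six projections are left to the reader ``by the same method.'' You instead run everything through Lemma \ref{lem:projection:last:coordinate} alone: computing the single vector $v=g^{-1}(e_n)$ for each of the three base elements, the whole row is read off as $v_n$, $v_n/\|v\|^2$, $v_n/\|v-e_1\|^2$, since $n^{-1}$ translates by $-e_1$ (which has zero last coordinate) and the last coordinate of $w_0(z)$ is $z_n/\|z\|^2$. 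This buys a visibly uniform treatment of all nine projections from three short vector computations, eliminates the group-element algebra entirely, and makes the independence from the choice of $\rho_{\widehat{\cdot}}$ transparent (only $\langle\rho^{-1}(e_1),e_1\rangle$ ever enters, as you note). I verified your two normalization identities $\|e_1+w_0(x)\|^2=\|x-e_1\|^2/\|x\|^2$ and $\|v-e_1\|^2=(1+\|x\|^2)/\|x-e_1\|^2$, and they do produce exactly the nine stated values. Two small points to make explicit when writing this up: first, your reduction uses the last coordinate of $w_0^{-1}(v)$, so you should record that $w_0^{-1}=w_0$ (the inversion and the reflection in \eqref{w0} commute and are each involutions), or else compute $w_0^{-1}$ directly; second, your claim that ``in every case $v$ comes out as $\tfrac1{\mu}(e_n\pm\rho^{-1}(e_1))$'' fails literally for $g_{\infty 0x}$, where $v=\tfrac{1}{\|x\|}e_n$ with no $\rho^{-1}(e_1)$ term --- harmless, as you treat that row separately anyway, but the statement should be qualified.
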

Note in particular that these projections do not depend on the choice of $\rho_{\widehat{x}} \in M$.

\begin{proof} We only prove the first line of equations, i.e those involving $g_{0 e_1x}$, which are the most tricky. The motivated reader can check the remaining easier equations using the same method. From the definition of $g_{0 e_1x}$ we obtain 
\begin{align*}
g^{ - 1}_{0 e_1x}=&\rho^{-1}_{\widehat{e_1+w_0(x)}}a_{\frac{1}{\|e_1+w_0(x)\|}}nw_0^{-1}\\
=&a_{\frac{1}{\|e_1+w_0(x)\|}}\rho^{-1}_{\widehat{e_1+w_0(x)}}nw_0^{-1}\\
=&a_{\frac{1}{\|e_1+w_0(x)\|}}n_{\rho^{-1}_{\widehat{e_1+w_0(x)}}(e_1)}\rho^{-1}_{\widehat{e_1+w_0(x)}}w_0^{-1},
\end{align*}
where we used twice the commutativity rules given in (\ref{eq:commutativity:law}). Since this expression is of the form $ank$ we have that 
$$
\pi_A(g_{0 e_1x}^{-1})=a_{\frac{1}{\|e_1+w_0(x)\|}}  .
$$
The square of the norm of the element $e_1+w_0(x)$ is easily computed:
\begin{align*}
\|e_1+w_0(x)\|^2=&\|e_1\|^2+\|w_0(x)\|^2+2\langle e_1,w_0(x) \rangle=\\
=&1+\frac{1}{\|x\|^2}-2\frac{\langle e_1,x \rangle}{\|x\|^2}=\\
=&\frac{\|x-e_1\|^2}{\|x\|^2} .
\end{align*}
As a consequence we get the first expression
$$
\pi_A(g_{0e_1x}^{-1})=a_{\frac{\|x\|}{\|x-e_1\|}}
$$
as expected. Starting from the above expression for $g_{0e_1x}^{-1}$ and using the fact that $w_0^{-1}aw_0=a^{-1}$ for every $a\in A$, we get that
$$
\pi_A(w_0^{-1}g_{0e_1x}^{-1})=a_{\frac{\|x-e_1\|}{\|x\|}}\pi_A(w_0^{-1}n_{\rho^{-1}_{\widehat{e_1+w_0(x)}}(e_1)}).
$$
By Lemma \ref{lem:projection:last:coordinate} we know that
$$
\pi_A(w_0^{-1}n_v)=a_{\frac{1}{1+\|v\|^2}}  ,
$$
for any $v\in \mathbb{R}^{n-1}\times \{0\}$. In this particular context the previous formula allows us to write
$$
\pi_A(w_0^{-1}g_{0e_1x}^{-1})=a_{\frac{\|x-e_1\|}{\|x\|}}a_{\frac{1}{2}}=a_{\frac{\|x-e_1\|}{2\|x\|}} ,
$$
which is the second desired expression. We now compute the third term
\begin{align*}
 \pi_A(w_0^{-1}n^{-1}g_{0e_1x}^{-1})=&\pi_A(w_0^{-1}n^{-1}a_{\frac{1}{\|e_1+w_0(x)\|}}n_{\rho^{-1}_{\widehat{e_1+w_0(x)}}(e_1)})\\
 =&a_{\|e_1+w_0(x)\|}\pi_A(w_0^{-1}n_{-\|e_1+w_0(x)\|e_1+\rho^{-1}_{\widehat{e_1+w_0(x)}}(e_1)}).
\end{align*}
where we used again the relations from \eqref{eq:commutativity:law} to pull out the dilation term. To conclude the computation, we need to consider the square of the norm of the vector appearing in the translation term. We have that
\begin{align*}
\left\|\|e_1+w_0(x)\|e_1-\rho^{-1}_{\widehat{e_1+w_0(x)}}(e_1)\right\|^2=&\|e_1+w_0(x)\|^2+1-2\|e_1+w_0(x)\|\frac{\langle e_1+w_0(x),e_1 \rangle}{\|e_1+w_0(x)\|}\\
=&1+\frac{1}{\|x\|^2}+2\langle e_1,w_0 \rangle +1-2-2\langle e_1,w_0(x) \rangle\\
=&\frac{1}{\|x\|^2}  .
\end{align*}
Therefore, the last projection is given by
$$
 \pi_A(w_0^{-1}n^{-1}g_{0e_1x}^{-1})=a_{\frac{\|x-e_1\|}{\|x\|}}a_{\frac{1}{1+\frac{1}{\|x\|^2}}}=a_{\frac{\|x\|\|x-e_1\|}{1+\|x\|^2}} .
$$

\end{proof}

For the proof of Theorem \ref{Theorem 2 to 4 for prod of Hn} we will need the following three propositions:

\begin{prop} \label{piA for mu}Take $x\in \partial \mathbb{H}^n\setminus \{\infty,0\}$ and $g_{\infty0x},g_{\infty e_1x},g_{0 e_1x}$ as above. For any $y\in \partial \mathbb{H}^n\setminus \{\infty,0,x\}$ set 
$$\mu(x,y):= g^{-1}_{\infty e_1 x}(y).$$
We have
$$\begin{array}{cc}
\pi_A(w_0^{-1}g_{0e_1\mu(x,y)}^{-1})=a_{\frac{\|y-x\|}{2\|y-e_1\|}},&\pi_A(w_0^{-1}g_{{0}e_1\mu(x,y)}^{-1}g_{{\infty}  {e_1} x}^{-1})=a_{\frac{\|y-x\|\|x-e_1\|}{\|y-e_1\|(\|x\|^2+1)}}, \\
\pi_A(w_0^{-1}g_{\infty e_1 \mu(x,y)}^{-1})=a_{\frac{\|y-x\|}{2\|x-e_1\|}},& \pi_A(w_0^{-1}g_{{\infty} e_1\mu(x,y)}^{-1}g_{{\infty}  {e_1} x}^{-1})=a_{\frac{\|y-x\|}{\|x\|^2+1}} ,\\
\pi_A(w_0^{-1}g_{\infty 0 \mu(x,y)}^{-1})=a_{\frac{\|y-e_1\|}{\|x-e_1\|}},&\pi_A(w_0^{-1}g_{{\infty}{0}\mu(x,y)}^{-1}g_{{\infty} {e_1} x}^{-1})=a_{\frac{\|y-e_1\|}{2}}.
\end{array}$$
\end{prop}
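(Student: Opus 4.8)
The plan is to reduce every one of the six formulas to Lemma \ref{lem:projection:last:coordinate}, which identifies $\pi_A(g)$ with $a_\lambda$ where $\lambda$ is the last coordinate (height) of $g(e_n)$, together with the projections already established in Proposition \ref{prop: pi A projections}. The first step is to record two norm identities for $\mu(x,y)=g_{\infty e_1 x}^{-1}(y)$. Since $g_{\infty e_1 x}$ acts on $\partial\mathbb{H}^n$ as the similarity $z\mapsto \|x-e_1\|\,\rho_{\widehat{x-e_1}}(z)+e_1$, its inverse is a similarity of ratio $\|x-e_1\|^{-1}$ which sends $e_1\mapsto 0$ and $x\mapsto e_1$. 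Comparing $\mu(x,y)$ with $g_{\infty e_1 x}^{-1}(e_1)=0$ and with $g_{\infty e_1 x}^{-1}(x)=e_1$ then yields
$$\|\mu(x,y)\|=\frac{\|y-e_1\|}{\|x-e_1\|},\qquad \|\mu(x,y)-e_1\|=\frac{\|y-x\|}{\|x-e_1\|}.$$

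For the three identities in the left column I would simply substitute $z=\mu(x,y)$ into the formulas for $\pi_A(w_0^{-1}g_{0e_1z}^{-1})$, $\pi_A(w_0^{-1}g_{\infty e_1 z}^{-1})$ and $\pi_A(w_0^{-1}g_{\infty 0 z}^{-1})$ furnished by Proposition \ref{prop: pi A projections}, and simplify using the two displayed identities. For instance the first becomes $a_{\|\mu-e_1\|/(2\|\mu\|)}=a_{\|y-x\|/(2\|y-e_1\|)}$, and the other two are equally immediate. This already shows, incidentally, why these projections do not depend on the choice of $\rho\in M$.

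The three identities in the right column are more delicate, as they involve composites ending in $g_{\infty e_1 x}^{-1}$. Here I would apply Lemma \ref{lem:projection:last:coordinate} directly and track the image of $e_n$ through the successive maps $g_{\infty e_1 x}^{-1}$, then $g_{0e_1\mu}^{-1}$, $g_{\infty e_1\mu}^{-1}$ or $g_{\infty 0\mu}^{-1}$, and finally $w_0^{-1}=w_0$ (the map \eqref{w0} being an involution). Two observations keep this manageable: every $\rho\in M$ fixes $e_n$ and acts orthogonally on $\mathbb{R}^{n-1}\times\{0\}$, so at each stage the image of $e_n$ has the form $\lambda e_n+W$ with $W\in\mathbb{R}^{n-1}\times\{0\}$, the rotations affecting neither $\lambda$ nor $\|W\|$; and from \eqref{w0} one reads off that $w_0(\lambda e_n+W)$ has height $\lambda/(\lambda^2+\|W\|^2)$. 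The apparent dependence on the non-canonical $\rho_{\widehat v}\in M$ disappears because only the determined vector $\rho_{\widehat v}(e_1)=\widehat v$ enters the inner products, which then collapse via $\|x\pm e_1\|^2=\|x\|^2\pm 2\langle x,e_1\rangle+1$ and $\|x-e_1\|^2+\|x+e_1\|^2=2(\|x\|^2+1)$.

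The main obstacle is the identity involving $g_{0e_1\mu}$, whose definition already contains a factor $w_0$; the composite $w_0^{-1}g_{0e_1\mu}^{-1}g_{\infty e_1 x}^{-1}$ therefore carries \emph{two} inversions, so $e_n$ must be pushed through a longer chain (a translation by $e_1$, a dilation by $\|e_1+w_0\mu\|^{-1}$, and two applications of $w_0$) while repeatedly checking that the $\rho$-dependent terms cancel. This is where the bookkeeping concentrates; using the identity $\|e_1+w_0(z)\|=\|z-e_1\|/\|z\|$ computed in the proof of Proposition \ref{prop: pi A projections} together with the two norm identities for $\mu$, the accumulated height simplifies to $\|y-x\|\|x-e_1\|/(\|y-e_1\|(\|x\|^2+1))$, as claimed. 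The remaining two right-column identities follow by the same, shorter, single-inversion computation.
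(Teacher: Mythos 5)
Your proposal is correct and follows essentially the same route as the paper: the left-column projections are obtained by substituting the norm identities $\|\mu(x,y)\| = \|y-e_1\|/\|x-e_1\|$ and $\|\mu(x,y)-e_1\| = \|y-x\|/\|x-e_1\|$ into Proposition \ref{prop: pi A projections}, and the right-column composites are handled via Lemma \ref{lem:projection:last:coordinate} by tracking the height of the image of $e_n$, using the same identity $\|e_1+w_0(z)\|=\|z-e_1\|/\|z\|$ at the crucial step (the paper merely pulls the dilation factors out front with the commutation rules \eqref{eq:commutativity:law} before evaluating on $e_n$, whereas you push $e_n$ through them directly). Your derivation of the two norm identities by viewing $g_{\infty e_1 x}^{-1}$ as a similarity of ratio $\|x-e_1\|^{-1}$ is a slightly cleaner packaging of the paper's explicit computation, but the mathematical content is identical.
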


\begin{proof} For the three left projections, we will use the middle equations of Proposition \ref{prop: pi A projections} applied to $\mu(x,y)$. We thus need to understand how  we can write the point $\mu(x,y)$. Recall that $\widehat{y}=y/\|y\|$ is the normalization of $y$ and the maps $g_{\infty 0x},g_{\infty e_1x},g_{0e_1x}$ are as in \eqref{eq:three:group:elements}. We have
\begin{align*}
\mu(x,y)=g_{\infty e_1 x}^{-1}(y)=&a_{\frac{1}{\|x-e_1\|}}\rho^{-1}_{\widehat{x-e_1}}n^{-1}y=\\
=&a_{\frac{\|y-e_1\|}{\|x-e_1\|}}\rho^{-1}_{\widehat{x-e_1}}(\widehat{y-e_1}).
\end{align*}
Let us further compute the square of the norm of $\mu(x,y)-e_1$:
\begin{align*}
\|\mu(x,y)-e_1\|^2=&\left\|\frac{\|y-e_1\|}{\|x-e_1\|}\rho^{-1}_{\widehat{x-e_1}}(\widehat{y-e_1})-e_1\right\|^2=\\
=&\frac{\|y-e_1\|^2}{\|x-e_1\|^2}+1-2\frac{\|y-e_1\|}{\|x-e_1\|}\frac{\langle y-e_1,x-e_1 \rangle}{\|x-e_1\|\|y-e_1\|}=\\
=&\frac{\|y-x\|^2}{\|x-e_1\|^2} . 
\end{align*}
Proposition \ref{prop: pi A projections} now gives: 
\begin{align*}
\pi_A(w_0^{-1}g_{0e_1\mu(x,y)}^{-1})=&a_{\frac{\|\mu(x,y)-e_1\|}{\|\mu(x,y)\|}}a_{\frac{1}{2}}=a_{\frac{\|y-x\|}{2\|y-e_1\|}} \\
\pi_A(w_0^{-1}g_{\infty e_1 \mu(x,y)}^{-1})=&a_{\frac{\|y-x\|}{2\|x-e_1\|}} \\ 
\pi_A(w_0^{-1}g_{\infty 0 \mu(x,y)}^{-1})=&a_{\frac{\|y-e_1\|}{\|x-e_1\|}}  .
\end{align*}
For the projection of the right element of the first line, we have that 
\begin{align*}
w_0^{-1}g_{0 e_1 \mu(x,y)}^{-1}g_{\infty e_1 x}^{-1}=&w_0^{-1}a_{\frac{1}{\|e_1+w_0(\mu(x,y))\|}}\rho^{-1}_{\widehat{e_1+w_0(\mu(x,y))}}nw_0^{-1}a_{\frac{1}{\|x-e_1\|}}\rho^{-1}_{\widehat{x-e_1}}n^{-1}=\\
=&a_{\frac{\|e_1+w_0(\mu(x,y))\|}{\|x-e_1\|}}w_0^{-1}\rho^{-1}_{\widehat{e_1+w_0(\mu(x,y))}}n_{\frac{e_1}{\|x-e_1\|}}w_0^{-1}\rho^{-1}_{\widehat{x-e_1}}n^{-1} , 
\end{align*}
where we used Equation \eqref{eq:commutativity:law} to move from the first line to the second one. It follows that
$$\pi_A(w_0^{-1}g_{0 e_1 \mu(x,y)}^{-1}g_{\infty e_1 x}^{-1})= a_{\frac{\|e_1+w_0(\mu(x,y))\|}{\|x-e_1\|}}\pi_A(w_0^{-1}\rho^{-1}_{\widehat{e_1+w_0(\mu(x,y))}}n_{\frac{e_1}{\|x-e_1\|}}w_0^{-1}\rho^{-1}_{\widehat{x-e_1}}n^{-1}).$$
To compute the projection of the latter element, we need to consider its evaluation on $e_n$ and compute its last coordinate according to Lemma \ref{lem:projection:last:coordinate}:
\begin{align*}
&\left(w_0^{-1}\rho^{-1}_{\widehat{e_1+w_0(\mu(x,y))}}n_{\frac{e_1}{\|x-e_1\|}}w_0^{-1}\rho^{-1}_{\widehat{x-e_1}}n^{-1}\right)(e_n)=\\
=&\left[w_0^{-1}\rho^{-1}_{\widehat{e_1+w_0(\mu(x,y))}}\left(\frac{1}{2}e_n-\frac{1}{2}\rho^{-1}_{\widehat{x-e_1}}(e_1)+\frac{\langle x,e_1 \rangle}{\|x-e_1\|}\right)\right]  .
\end{align*}
Since $K$ stabilizes the point $e_n$, the last coordinate will be given by $1/2$ divided by the square of the norm of the vector inside the round brackets. If we compute the square of this norm we obtain
\begin{align*}
\left\|\frac{1}{2}e_n-\frac{1}{2}\rho_{x-e_1}^{-1}(e_1)+\frac{\langle x,e_1 \rangle}{\|x-e_1\|} \right\|^2=&\frac{1}{2}+\frac{\langle x,e_1 \rangle^2}{\|x-e_1\|^2}-\frac{\langle x,e_1 \rangle}{\|x-e_1\|}\frac{\langle e_1,x-e_1 \rangle}{\|x-e_1\|}\\
=&\frac{1}{2}+\frac{\langle x,e_1 \rangle}{\|x-e_1\|^2}=\frac{\|x\|^2+1}{2\|x-e_1\|^2}.
\end{align*}
Finally we can conclude that 
$$\pi_A(w_0^{-1}g_{0 e_1 \mu(x,y)}^{-1}g_{\infty e_1 x}^{-1})= a_{\frac{\|e_1+w_0(\mu(x,y))\|}{\|x-e_1\|}}a_{\frac{\|x-e_1\|}{\|x\|^2+1}}=a_{\frac{\|y-x\|\|x-e_1\|}{\|y-e_1\|(\|x\|^2+1)}} .$$

Since the computations of the two remaining projections are entirely similar, we omit the details.\end{proof}

We will need to compute 12 further projections which we simply report now without proof. The details of the computations follow the same line as the proof of Proposition \ref{piA for mu}.

\begin{prop}\label{piA for lambda} Take $x\in \partial \mathbb{H}^n\setminus \{\infty,0\}$ and $g_{\infty0x},g_{\infty e_1x},g_{0 e_1x}$ as above. For any $y\in \partial \mathbb{H}^n\setminus \{\infty,0,x\}$ set 
$$\lambda(x,y):= g^{-1}_{\infty 0 x}(y).$$
We have
$$\begin{array}{cc}
\pi_A(w_0^{-1}g_{0e_1\lambda(x,y)}^{-1})=a_{\frac{\|y-x\|}{2\|y\|}},& \pi_A(w_0^{-1}g_{\infty e_1\lambda(x,y)}^{-1})=a_{\frac{\|y-x\|}{2\|x\|}}, \\
\pi_A(w_0^{-1}g_{\infty 0 \lambda(x,y)}^{-1})=a_{\frac{\|y\|}{\|x\|}},& \pi_A(w_0^{-1}g_{0e_1\lambda(x,y)}^{-1}g_{\infty 0 x}^{-1})=a_{\frac{\|x\|\|y-x\|}{\|y\|(1+\|x\|^2)}} , \\
\pi_A(w_0^{-1}g_{\infty e_1 \lambda(x,y)}^{-1}g_{\infty 0 x}^{-1})=a_{\frac{\|y-x\|}{\|x\|^2+1}},
&\pi_A(w_0^{-1}g_{\infty 0 \lambda(x,y)}^{-1}g_{\infty0x}^{-1})=a_{\|y\|}.
\end{array}
$$
\end{prop}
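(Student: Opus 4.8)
The plan is to mirror the proof of Proposition~\ref{piA for mu}, separating the six projections into the three left-hand ones, each of the form $\pi_A(w_0^{-1}g_{\bullet\bullet\lambda(x,y)}^{-1})$, and the three right-hand ones, which carry the additional factor $g_{\infty 0 x}^{-1}$. For the left-hand column I would simply feed the point $\lambda(x,y)$ into the middle-column formulas of Proposition~\ref{prop: pi A projections}, namely $\pi_A(w_0^{-1}g_{0e_1 z}^{-1})=a_{\|z-e_1\|/(2\|z\|)}$, $\pi_A(w_0^{-1}g_{\infty e_1 z}^{-1})=a_{\|z-e_1\|/2}$ and $\pi_A(w_0^{-1}g_{\infty 0 z}^{-1})=a_{\|z\|}$, evaluated at $z=\lambda(x,y)$. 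Thus the whole left column reduces to the two Euclidean quantities $\|\lambda(x,y)\|$ and $\|\lambda(x,y)-e_1\|$.

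These norms are immediate. From \eqref{eq:three:group:elements} we have $g_{\infty 0 x}=a_{\|x\|}\rho_{\widehat{x}}$, so that $g_{\infty 0 x}^{-1}=\rho_{\widehat{x}}^{-1}a_{1/\|x\|}$ and
\[
\lambda(x,y)=g_{\infty 0 x}^{-1}(y)=\tfrac{1}{\|x\|}\rho_{\widehat{x}}^{-1}(y).
\]
Since $\rho_{\widehat{x}}\in M=\mathrm{O}(n-1)$ is a Euclidean isometry, $\|\lambda(x,y)\|=\|y\|/\|x\|$, and using $\langle\rho_{\widehat{x}}^{-1}(y),e_1\rangle=\langle y,\rho_{\widehat{x}}(e_1)\rangle=\langle y,\widehat{x}\rangle=\langle x,y\rangle/\|x\|$ one finds
\[
\|\lambda(x,y)-e_1\|^2=\frac{\|y\|^2}{\|x\|^2}-\frac{2\langle x,y\rangle}{\|x\|^2}+1=\frac{\|y-x\|^2}{\|x\|^2}.
\]
Substituting $\|\lambda(x,y)\|=\|y\|/\|x\|$ and $\|\lambda(x,y)-e_1\|=\|y-x\|/\|x\|$ into the three formulas above produces exactly the three entries of the left column.

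For the right-hand column I would follow the scheme of Proposition~\ref{piA for mu} literally: expand each of $g_{\bullet\bullet\lambda(x,y)}^{-1}$ and $g_{\infty 0 x}^{-1}$ into its $A$-, $N$-, $M$- and $w_0$-factors by \eqref{eq:three:group:elements}, and then use the commutativity rules \eqref{eq:commutativity:law} together with $w_0^{-1}aw_0=a^{-1}$ to push every dilation to the far left. The accumulated leading dilation is one factor of the answer, and the remaining word, namely $w_0^{-1}$ followed by a product of $M$- and $N$-factors, has its $\pi_A$-projection computed through Lemma~\ref{lem:projection:last:coordinate}, by evaluating on $e_n$ (where the $M$-factors are harmless) and reading off the last coordinate, which appears as a fixed constant over the squared norm of an explicit vector. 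For the entry $\pi_A(w_0^{-1}g_{\infty 0\lambda(x,y)}^{-1}g_{\infty 0 x}^{-1})$ there is in fact a shortcut bypassing this computation: by \eqref{eq:commutativity:law}, $g_{\infty 0 x}\,g_{\infty 0\lambda(x,y)}=a_{\|x\|\,\|\lambda(x,y)\|}\,\rho_{\widehat{x}}\rho_{\widehat{\lambda(x,y)}}$, where $\|x\|\,\|\lambda(x,y)\|=\|y\|$ and $\rho_{\widehat{x}}\rho_{\widehat{\lambda(x,y)}}(e_1)=\widehat{y}$, so this product is itself an admissible choice of $g_{\infty 0 y}$; since the projections of Proposition~\ref{prop: pi A projections} do not depend on the choice of the $M$-factor, this yields $\pi_A(w_0^{-1}g_{\infty 0\lambda(x,y)}^{-1}g_{\infty 0 x}^{-1})=\pi_A(w_0^{-1}g_{\infty 0 y}^{-1})=a_{\|y\|}$ with no further work.

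The main obstacle is confined to the two remaining right-hand projections, those built from $g_{0e_1\lambda(x,y)}$ and $g_{\infty e_1\lambda(x,y)}$, where the shortcut is unavailable because the corresponding composite moves the base triple to a configuration not of the three standard types. There, exactly as in the computation of $\pi_A(w_0^{-1}g_{0e_1\mu(x,y)}^{-1}g_{\infty e_1 x}^{-1})$ carried out above, I would have to expand the squared norm of the leftover translation vector — an expression assembled from $e_1$, $w_0(\lambda(x,y))$ and several $\rho$-images — and verify that it collapses to the advertised rational function of $\|x\|$, $\|y\|$ and $\|y-x\|$. The one structural point that makes this feasible is that the $\rho_{\widehat{\,\cdot\,}}\in M$ are orthogonal and the end result is independent of their choice, so all $\rho$-dependent cross terms cancel; modulo this bookkeeping the computation is entirely routine and parallels Proposition~\ref{piA for mu}.
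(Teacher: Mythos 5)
Your proposal is correct and follows essentially the route the paper intends: the paper in fact states this proposition without proof, remarking only that the computations follow the same lines as Proposition~\ref{piA for mu}, and that is precisely what you execute — computing $\|\lambda(x,y)\|=\|y\|/\|x\|$ and $\|\lambda(x,y)-e_1\|=\|y-x\|/\|x\|$ and feeding them into Proposition~\ref{prop: pi A projections} for the left column, then using \eqref{eq:commutativity:law}, $w_0^{-1}aw_0=a^{-1}$ and Lemma~\ref{lem:projection:last:coordinate} for the right column. Your one genuine departure, the observation that $g_{\infty 0x}\,g_{\infty 0\lambda(x,y)}$ is itself an admissible choice of $g_{\infty 0y}$ (it fixes $\infty$ and $0$, sends $e_1$ to $y$, and the projections are independent of the $M$-factor), is correct and neatly bypasses the expansion for the entry $a_{\|y\|}$; the two entries you leave as routine do collapse as you predict — e.g.\ for $w_0^{-1}g_{0e_1\lambda(x,y)}^{-1}g_{\infty 0x}^{-1}$ the leading dilation is $a_{c/\|x\|}$ with $c=\|e_1+w_0(\lambda(x,y))\|=\|y-x\|/\|y\|$, and the leftover word evaluated on $e_n$ gives the $\rho$-independent factor $a_{\|x\|^2/(1+\|x\|^2)}$, yielding $a_{\|x\|\|y-x\|/(\|y\|(1+\|x\|^2))}$ as claimed.
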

\begin{prop}\label{piA for nu} Take $x\in \partial \mathbb{H}^n\setminus \{\infty,0\}$ and $g_{\infty0x},g_{\infty e_1x},g_{0 e_1x}$ as above. For any $y\in \partial \mathbb{H}^n\setminus \{\infty,0,x\}$ set 
$$\nu(x,y):= g^{-1}_{0 e_1 x}(y).$$
We have
$$\begin{array}{cc}
$$
\pi_A(w_0^{-1}g_{0e_1\nu(x,y)}^{-1})=a_{\frac{\|y-x\|}{2\|x\|\|y-e_1\|}},&\pi_A(w_0^{-1}g_{\infty e_1 \nu(x,y)}^{-1})=a_{\frac{\|y-x\|}{2\|y\|\|x-e_1\|}}, \\
\pi_A(w_0^{-1}g_{\infty0\nu(x,y)}^{-1})=a_{\frac{\|x\|\|y-e_1\|}{\|y\|\|x-e_1\|}} ,
&
\pi_A(w_0^{-1}g_{0e_1\nu(x,y)}^{-1}g_{0e_1 x}^{-1})=a_{\frac{\|y-x\|\|x-e_1\|}{\|y-e_1\|(1+\|x\|^2)}}, \\  \pi_A(w_0^{-1}g_{\infty e_1\nu(x,y)}^{-1}g_{0e_1 x}^{-1})=a_{\frac{\|y-x\|\|x\|}{\|y\|(1+\|x\|^2)}} ,
&
\pi_A(w_0^{-1}g_{\infty 0 \nu(x,y)}^{-1}g_{0e_1 x}^{-1})=a_{\frac{\|y-e_1\|}{2\|y\|}}.\\
\end{array}$$
\end{prop}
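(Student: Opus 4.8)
The plan is to follow the proof of Proposition \ref{piA for mu} line by line, separating the six identities into the three \emph{left} projections, which involve only $\nu(x,y)$, and the three \emph{right} ones, which additionally carry the factor $g_{0e_1x}^{-1}$.

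For the left projections, I would first put $\nu(x,y)=g_{0e_1x}^{-1}(y)$ in closed form. Using the factorization $g_{0e_1x}^{-1}=\rho^{-1}_{\widehat{e_1+w_0(x)}}\,a_{1/\|e_1+w_0(x)\|}\,n\,w_0^{-1}$ from the proof of Proposition \ref{prop: pi A projections}, the fact that $w_0$ is an involution (hence $w_0^{-1}(y)=w_0(y)$), and $n=n_{e_1}$, I obtain
$$
\nu(x,y)=\tfrac{1}{\|e_1+w_0(x)\|}\,\rho^{-1}_{\widehat{e_1+w_0(x)}}\big(w_0(y)+e_1\big).
$$
Since $\rho\in M$ is orthogonal and $\|e_1+w_0(z)\|=\|z-e_1\|/\|z\|$ (as computed in Proposition \ref{prop: pi A projections}), this gives at once $\|\nu(x,y)\|=\|x\|\|y-e_1\|/(\|x-e_1\|\|y\|)$. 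The only genuine computation is $\|\nu(x,y)-e_1\|$: expanding $\|\nu-e_1\|^2=\|\nu\|^2+1-2\langle\nu,e_1\rangle$ and transferring $\rho^{-1}$ onto $e_1$ by orthogonality reduces the cross term to $\langle w_0(y)+e_1,\,e_1+w_0(x)\rangle/\|e_1+w_0(x)\|^2$, which I evaluate with the conformality identity $\langle w_0(y),w_0(x)\rangle=\langle x,y\rangle/(\|x\|^2\|y\|^2)$; after simplification everything collapses to $\|\nu(x,y)-e_1\|=\|y-x\|/(\|x-e_1\|\|y\|)$. Substituting these two norms into the ``middle column'' identities $\pi_A(w_0^{-1}g_{0e_1z}^{-1})=a_{\|z-e_1\|/(2\|z\|)}$, $\pi_A(w_0^{-1}g_{\infty e_1z}^{-1})=a_{\|z-e_1\|/2}$ and $\pi_A(w_0^{-1}g_{\infty0z}^{-1})=a_{\|z\|}$ of Proposition \ref{prop: pi A projections}, with $z=\nu(x,y)$, yields the first three claimed formulas.

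For the three right projections I would compute the products directly, exactly as in the final part of the proof of Proposition \ref{piA for mu}. Writing $w_0^{-1}g_{\bullet\bullet\nu(x,y)}^{-1}g_{0e_1x}^{-1}$ out from \eqref{eq:three:group:elements} and applying the commutativity rules \eqref{eq:commutativity:law} repeatedly, I push every dilation to the far left, each passage through $w_0^{-1}$ inverting it via $w_0^{-1}a w_0=a^{-1}$ while each passage through $n$ only rescales the translation vector; this exposes an explicit $A$-factor built from the norms already computed, leaving a residual product of $w_0^{-1}$'s, $\rho$'s and $n$'s. The $\pi_A$-projection of the residual is then read off from Lemma \ref{lem:projection:last:coordinate}, by evaluating it on $e_n$ (so that the trailing $K$-part drops out) and recording the last coordinate, a unit-sphere inversion contributing the reciprocal of a squared norm.

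The main obstacle is bookkeeping rather than any conceptual difficulty, and it is marginally heavier than in Proposition \ref{piA for mu} because $g_{0e_1x}^{-1}$ itself contains a factor $w_0^{-1}$, so the right-hand products carry several nested occurrences of $w_0$. The two delicate points are the evaluation of $\|\nu(x,y)-e_1\|$, where the orthogonal map $\rho_{\widehat{e_1+w_0(x)}}$ and the non-isometric $w_0$ must be controlled through the conformality identity, and the last-coordinate computations for the product terms, where one has to track the inner products produced by the successive occurrences of $w_0^{-1}$ so that the $\rho$-factors cancel and the output is manifestly independent of the choice of $\rho_{\widehat{\,\cdot\,}}\in M$. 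As with Propositions \ref{piA for mu} and \ref{piA for lambda}, once these norms are secured each identity follows by substitution, so I would carry out one representative right projection in full and omit the remaining entirely analogous cases.
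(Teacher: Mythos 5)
Your proposal is correct and follows exactly the route the paper intends: the paper states Proposition \ref{piA for nu} without proof, remarking only that the computations ``follow the same line as the proof of Proposition \ref{piA for mu}'', and that is precisely what you carry out. Your intermediate formulas check out --- $\nu(x,y)=\frac{1}{\|e_1+w_0(x)\|}\rho^{-1}_{\widehat{e_1+w_0(x)}}(w_0(y)+e_1)$, $\|\nu(x,y)\|=\frac{\|x\|\|y-e_1\|}{\|y\|\|x-e_1\|}$, $\|\nu(x,y)-e_1\|=\frac{\|y-x\|}{\|y\|\|x-e_1\|}$ --- and substituting them into the middle-column identities of Proposition \ref{prop: pi A projections}, respectively running the $e_n$-evaluation of Lemma \ref{lem:projection:last:coordinate} on the product terms (where the extra $w_0^{-1}$ inside $g_{0e_1x}^{-1}$ is handled as you describe), reproduces all six stated projections.
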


\subsection{Contracting homotopies and differentials}\label{homo for products} Let now $G=G_1\times \dots \times G_k$, where $G_i=\mathrm{Isom}^+(\mathbb{H}^{n_i})$ for $n_i\geq 2$. A representative for the longest element $w_0\in G$ can be taken to be on each factor the element $w_0$ defined in (\ref{w0}) in the case of one factor. We hope that the confusing notation
$$w_0=(w_0,\dots,w_0)\in K$$
will not confuse the reader. Likewise, we define $n\in G$ as the translation by $e_1$ on each factor, so that here also we write
$$n=(n,\dots,n)\in P.$$
We are more careful with elements in $G/P$ and write
\begin{align*}
\underline{    \infty }=&(\infty,\dots,\infty),\\
\underline{    0 }=&(0,\dots,0),\\
\underline{   1  }=&(e_1,\dots,e_1),
\end{align*}
for our favorite three elements in the boundary $G_1/P_1\times \dots \times G_k/P_k=G/P$.

Contracting homotopies for the cocomplexes $(C^{*,q}_K,d^\uparrow)$, for $q\geq 3$, can obviously be defined as follows: 
$$\begin{array}{rccl}
h^{p,q}:&C^{p,q}_K&\longrightarrow & C^{p-1,q}_K\\
%& f&\longmapsto &\{ (g_0,\dots,g_{p-1})\mapsto f(e,g_0,\dots,g_{p-1})\}.
\end{array}
$$ 
with 
$$h^{p,q}(f)(g_0,\dots, g_{p-1})(\underline{\infty}, \underline{0}, \underline{1}, x_4,\dots, x_q):= f(e,g_0,\dots, g_{p-1})(\underline{\infty}, \underline{0}, \underline{1}, x_4,\dots, x_q).$$
Note that this defines a measurable function on a dense subset of $G^p$: Indeed, although $f$ is only defined on a subset of full measure of $G^{p+1}$, the evaluation on $(e,g_0,\dots,g_{p-1})$ makes sense on a subset of full measure of $G^p$ by the $G$-invariance of $f$. Likewise, the evaluation on $(\underline{\infty}, \underline{0}, \underline{1}, x_4,\dots, x_q)$ is allowed for almost all $(x_4,\dots,x_q)$ by $3$-transitivity. Finally observe that the resulting cochain is $G$-invariant by construction and $3$-transitivity. 

It is straightforward to check that 
$$h^{p+1,q}\circ d^\uparrow + d^\uparrow \circ h^{p,q}=\mathrm{Id}.$$

Before we proceed, we can again exploit $3$-transitivity to give the following explicit form for the differential $d^\rightarrow:C^{p,2}\rightarrow C^{p,3}$:

\begin{lem}\label{d 2 to 3} The differential
$$d^\rightarrow:C^{p,2}=L^0(G^{p+1})^A \longrightarrow L^0(G^{p+1})=C^{p,3}$$
is given by
$$d^\rightarrow(\beta)(g_0,\dots,g_p)=(-1)^{p+1}[\beta(nw_0^{-1}g_0,\dots,nw_0^{-1}g_p)-\beta(n^{-1}g_0,\dots,n^{-1}g_p)+\beta(g_0,\dots,g_p)].$$
\end{lem}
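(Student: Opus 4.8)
The plan is to unwind the two identifications $C^{p,2}\cong L^0(G^{p+1})^A$ and $C^{p,3}\cong L^0(G^{p+1})$ recalled in Section \ref{Differentials}, and then to compute $d^\rightarrow$ directly as $(-1)^{p+1}$ times the homogeneous coboundary in the $G/P$-variables, evaluated on the standard triple $(\underline{\infty},\underline{0},\underline{1})$ of boundary points.

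First I would record the explicit form of the two identifications. Writing $g_\bullet=(g_0,\dots,g_p)$, a cochain $f\in C^{p,2}$ is determined by its value on the generic pair $(\underline{\infty},\underline{0})=(eP,w_0P)$ via $\beta(g_\bullet)=f(g_\bullet)(\underline{\infty},\underline{0})$, and by $G$-invariance this inverts to $f(g_\bullet)(hP,hw_0P)=\beta(h^{-1}g_\bullet)$ for every $h\in G$ (well-defined since two such presentations differ by an element of $P\cap w_0Pw_0^{-1}=MA$, under which $\beta$ is invariant). Similarly, by $3$-transitivity a cochain in $C^{p,3}$ is determined by its value on the generic triple $(\underline{\infty},\underline{0},\underline{1})$, which I write as $(eP,w_0P,nw_0P)$ using $w_0(\infty)=0$ and $nw_0(\infty)=n(0)=e_1$ on each factor; this realizes the identification with $L^0(G^{p+1})$.

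Since $d^\rightarrow$ equals $(-1)^{p+1}$ times the homogeneous coboundary acting on the $G/P$-arguments, evaluating $d^\rightarrow f$ on the base triple gives
$$
(-1)^{p+1}\bigl[f(g_\bullet)(\underline{0},\underline{1})-f(g_\bullet)(\underline{\infty},\underline{1})+f(g_\bullet)(\underline{\infty},\underline{0})\bigr].
$$
The core of the argument is to rewrite each of the three pairs occurring here in the form $(hP,hw_0P)$, so that the corresponding term becomes $\beta(h^{-1}g_\bullet)$. For $(\underline{\infty},\underline{0})$ one takes $h=e$; for $(\underline{\infty},\underline{1})=(eP,nw_0P)$ one takes $h=n$, which works because $n\in P$ forces $nP=eP$; and for $(\underline{0},\underline{1})=(w_0P,nw_0P)$ one takes $h=w_0n^{-1}$. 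Substituting these into the inversion formula produces $\beta(g_\bullet)$, $\beta(n^{-1}g_\bullet)$ and, using $(w_0n^{-1})^{-1}=nw_0^{-1}$, $\beta(nw_0^{-1}g_\bullet)$ respectively, which assembled with the signs above is exactly the asserted expression.

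The one step requiring care — and the main obstacle — is verifying the last matching, namely that $h=w_0n^{-1}$ satisfies $hw_0P=nw_0P$. This is precisely where the six-term relation \eqref{6 terms rel} enters: since $w_0$ is the composition of the two commuting involutions (the inversion in the unit sphere and the reflection orthogonal to $e_1$) it satisfies $w_0^{-1}=w_0$, so \eqref{6 terms rel} may be rewritten as $w_0nw_0=n^{-1}w_0n^{-1}$, and inverting both sides yields $w_0n^{-1}w_0=nw_0n$. Hence $hw_0P=w_0n^{-1}w_0P=nw_0nP=nw_0P$, using $n\in P$ at the last step. The remaining coset identities $nP=eP$ and $w_0n^{-1}P=w_0P$ are immediate from $n,n^{-1}\in N\subset P$, while the commutativity rules \eqref{eq:commutativity:law} are only needed to keep track of representatives; everything past the identities above is routine bookkeeping.
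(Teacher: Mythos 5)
Your proof is correct and takes essentially the same route as the paper's: unwind the induction identifications, evaluate the $(-1)^{p+1}$-weighted homogeneous coboundary on the standard triple $(\underline{\infty},\underline{0},\underline{1})$, and rewrite the three pairs as translates of the base pair $(\underline{\infty},\underline{0})$ by $e$, $n$ and $w_0n^{-1}$, exactly as in the paper. The only (cosmetic) difference is that where you verify the key coset identity $w_0n^{-1}w_0P=nw_0P$ via the relation \eqref{6 terms rel} and the involutivity of $w_0$, the paper simply computes the boundary images directly, checking $w_0n^{-1}(\underline{\infty})=\underline{0}$ and $w_0n^{-1}(\underline{0})=\underline{1}$.
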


\begin{proof} A cochain $\beta\in L^0(G^{p+1})^A$ corresponds to $  \overline{\beta}\in C^{p,2}=L^0(G^{p+1}, L((G/P)^2))^G$ in the following way:
$$\overline{\beta}(g_0,\dots,g_p)(h^{-1}\underline{\infty},h^{-1} \underline{0}):=\beta(hg_0,\dots,hg_p),$$
Likewise the cochain $d^\rightarrow(\beta)\in L^0(G^{p+1})$ is given by
$$d^\rightarrow(\beta)(g_0,\dots,g_p)=d^\rightarrow \overline{\beta}(g_0,\dots,g_p)(\underline{\infty}, \underline{0},\underline{1}),$$
where the evaluation on the triple $(\underline{\infty}, \underline{0},\underline{1})$ makes sense by the $G$-invariance of $d^\rightarrow \overline{\beta}$. By definition of $d^\rightarrow$, we have
\begin{align*}d^\rightarrow \overline{\beta}(g_0,\dots,g_p)(\underline{\infty}, \underline{0},\underline{1})=& (-1)^{p+1}[\overline{\beta}(g_0,\dots,g_p)(\underline{0},\underline{1})-\overline{\beta}(g_0,\dots,g_p)(\underline{\infty},\underline{1})+\overline{\beta}(g_0,\dots,g_p)(\underline{\infty}, \underline{0})]\\
=&(-1)^{p+1}[\overline{\beta}(g_0,\dots,g_p)(w_0n^{-1}\underline{\infty}, w_0 n^{-1}\underline{0})-\overline{\beta}(g_0,\dots,g_p)(n\underline{\infty}, n\underline{0})\\
&+\overline{\beta}(g_0,\dots,g_p)(\underline{\infty}, \underline{0})]\\
=&(-1)^{p+1}[{\beta}(nw_0^{-1}g_0,\dots,nw_0^{-1}g_p)-{\beta}(n^{-1}g_0,\dots,n^{-1}g_p)\\
&+{\beta}(g_0,\dots,g_p)],
\end{align*}
which proves the lemma. 
\end{proof}
%\begin{lem}\label{d 2 to 3}
%$$d^\rightarrow: L^0(G^{k+1})^A=C^{k,2}\longrightarrow C^{k,3}=L^0(G^{k+1})$$
%$$d^\rightarrow\beta(g_0,\dots,g_k)=\beta(nw_0g_0,\dots,nw_0g_k)-\beta(n^{-1}g_0,\dots,n^{-1}g_k)+\beta(g_0,\dots,g_k)$$
%\end{lem}

\subsection{Proof of Theorem \ref{Theorem 2 to 3 for prod of Hn}}\label{proof of theorem 2}

Let $\alpha_\mathfrak{a} \in \wedge^2\mathfrak{a}^\ast$ be an alternating form, $\widetilde{\alpha}:A^2\rightarrow \mathbb{R}$ be the corresponding inhomogenous $2$-cocycle 
$$\widetilde{\alpha}(a_1,a_2):= \alpha_\mathfrak{a}(\log a_1,\log a_2)$$
and $\alpha:A^3\rightarrow \mathbb{R}$ be the homogenous $A$-invariant cocycle 
$$\alpha(a_0,a_1,a_2)=\widetilde{\alpha}(a_0^{-1}a_1,a_1^{-1}a_2).$$
%Observe that
%WHATEVER PROPERTIES I NEED FOR $\alpha$

The cocycle
$$\alpha_G:G^3\longrightarrow \mathbb{R}$$
defined by
$$\alpha_G(g_0,g_1,g_2)=\alpha(\pi_A(g_0),\pi_A(g_1),\pi_A(g_2)),$$
where $\pi_A:G\rightarrow A$ is as above the projection given by the Iwasawa decomposition, 
is clearly $P$-invariant, so is an element in $L^0(G^3)^P=C^{2,1}$. Furthermore, its cohomology class in $H^2(C^{2,1},d^\uparrow)\cong H^2_m(P)\cong H^2_m(A)\cong (\wedge^2 \mathfrak{a})^*$ clearly corresponds to $\alpha_\mathfrak{a}$. By Proposition \ref{Section} we need to follow $\overline{\alpha_G}$ through the maps
\begin{equation}\label{eq:diagram:chasing}
\xymatrix{
\overline{\alpha_G} \in C^{2,1} \ar[r] & d^\rightarrow \overline{\alpha_G}=d^\uparrow \beta \in C^{2,2} & &\\
& \mathcal{H}^2(\overline{\alpha_G})=:\beta \in C^{1,2} \ar[u] \ar[r] & d^\rightarrow \beta=d^\uparrow \omega\in C^{1,3} &\\
& & h^{1,3}(d^\rightarrow \beta)=: \omega \in C^{0,3} \ar[u] \ar[r] &d^\rightarrow \omega=:\Omega_3 \in C^{0,4} 
}
\end{equation}
and evaluate $\Omega_3=d^\rightarrow \omega$ on $e$. 

\subsubsection*{Computation of $\beta$} By the definition of $\mathcal{H}^2$, we have 
\begin{eqnarray*}\label{beta}\beta(g_0,g_1)&=&\mathcal{H}^2(\overline{\alpha_G})(g_0,g_1)\\
&=&-\alpha_G(w_0^{-1}\pi_A(g_0),w_0^{-1}g_0,w_0^{-1}g_1)+ \alpha_G(w_0^{-1}\pi_A(g_0),w_0^{-1}\pi_A(g_1),w_0^{-1}g_1).
\end{eqnarray*}

\subsubsection*{Computation of $\omega$} By the description of $d^\rightarrow$ from Lemma \ref{d 2 to 3} and the definition of $h^{1,3}$ we obtain $\omega\in C^{0,3}=L^0(G,\mathbb{R})$ as 
\begin{eqnarray*}
\omega(g)&=&h^{1,3}(d^\rightarrow \beta )=d^\rightarrow \beta(e,g)\\
	&=&\beta(nw_0^{-1},nw_0^{-1}g)-\beta(n^{-1},n^{-1}g_0)+\beta(e,g).
	\end{eqnarray*}
Using the expression for $\beta$ above, we can rewrite it as
\begin{eqnarray}\label{eq:computation:omega}
\omega(g)&=&-\alpha_G(w_0^{-1}\pi_A(nw_0^{-1}),w_0^{-1}nw_0^{-1},w_0^{-1}nw_0^{-1}g)\\
&&+ \alpha_G(w_0^{-1}\pi_A(nw_0^{-1}),w_0^{-1}\pi_A(nw_0^{-1}g),w_0^{-1}n w_0^{-1}g)\nonumber\\
&&+\alpha_G(w_0^{-1}\pi_A(n^{-1}),w_0^{-1}n^{-1},w_0^{-1}n^{-1}g)\nonumber\\
&&-\alpha_G(w_0^{-1}\pi_A(n^{-1}),w_0^{-1}\pi_A(n^{-1}g),w_0^{-1}n^{-1}g )\nonumber\\
&&-\alpha_G(w_0^{-1}\pi_A(e),  w_0^{-1}    ,w_0^{-1}g) \nonumber  \\
&&+ \alpha_G(w_0^{-1}\pi_A(e),  w_0^{-1}\pi_A(g)   ,w_0^{-1}g). \nonumber
	\end{eqnarray}
Since $\pi_A(nw_0^{-1})=\pi_A(n^{-1})=\pi_A(w_0^{-1})=e$ the first coordinate of these $6$ evaluations of $\alpha_G$ is the identity $e$. Additionally, since $w_0^{-1}nw_0^{-1}=n^{-1}w_0^{-1}n^{-1}$, we have, for $g'=e $ or $g$ (or any element in $G$) that
\begin{equation}\label{pi of nwn}\pi_A(w_0^{-1}nw_0^{-1} g')=\pi_A(n^{-1}w_0^{-1}n^{-1}g')=\pi_A(w_0^{-1}n^{-1}g'),\end{equation}
where for the last equality we have used that $\pi_A(n'g')=\pi_A(g')$ for any $n'\in N$ and $g'\in G$. It follows that the first and third summands in Equation \eqref{eq:computation:omega} have precisely the same coordinates and hence  cancel. Moreover, the fifth summand vanishes since the evaluation of $\alpha_G$ is zero whenever two of the coordinates are equal (and here the first and second coordinates are $e$). We are thus left with the $2$nd, $4$th and $6$th summands, which, using (\ref{pi of nwn}) and the $N$-(left) invariance of $\pi_A$, we rewrite as 
\begin{eqnarray*}
\omega(g)&=&\alpha_G(e,w_0^{-1}\pi_A(w_0^{-1}g),w_0^{-1}n^{-1}g)\\
&&-\alpha_G(e,w_0^{-1}\pi_A(g),w_0^{-1}n^{-1}g)\\
&&+\alpha_G(e,w_0^{-1}\pi_A(g),w_0^{-1}g).
\end{eqnarray*}
Now recall on the one hand that $\pi_A(w_0^{-1}a)=a^{-1}$ and on the other hand $\alpha(e,a_1,a_2)=-\alpha(e,a_1,(a_2)^{-1})$ and hence 
$$\alpha_G(e,g_1,g_2)=-\alpha_G(e,g_1,\pi_A(g_2)^{-1}),$$
to conclude that
\begin{eqnarray*}
\omega(g)&=&-\alpha_G(e,\pi_A(w_0^{-1}g)^{-1},\pi_A(w_0^{-1}n^{-1}g)^{-1})\\
&&+\alpha_G(e,w_0^{-1}\pi_A(g),\pi_A(w_0^{-1}n^{-1}g)^{-1})\\
&&-\alpha_G(e,w_0^{-1}\pi_A(g),\pi_A(w_0^{-1}g)^{-1})\\
&=&d^\uparrow \alpha_G(e,w_0^{-1}\pi_A(g),\pi_A(w_0^{-1}g)^{-1},\pi_A(w_0^{-1}n^{-1}g)^{-1})\\
&&-\alpha_G(w_0^{-1}\pi_A(g),\pi_A(w_0^{-1}g)^{-1},\pi_A(w_0^{-1}n^{-1}g)^{-1})\\
&=&-\alpha_G(g,w_0^{-1}g,w_0^{-1}n^{-1}g) \,
\end{eqnarray*}
which corresponds to the value of $\omega(g)$ on our favorite triple of points. Notice that we exploited the $w_0$-invariance of $\alpha_G$ to obtain the last equation. In this way we finally get
\begin{equation}\label{omega}\omega(g) (\underline{\infty},\underline{0},\underline{e_1})=-\alpha_G(g,w_0^{-1}g,w_0^{-1}n^{-1}g) .\end{equation}

\subsubsection*{Computation of $\Omega_3=d^\rightarrow \omega(e)$} This final step will finally give us the representative $\Omega_3$ of the image of $\alpha_\mathfrak{a}$ under the section $s$. By definition of $d^\rightarrow$ we have
\begin{align*}
\Omega_3({\underline{\infty}},\underline{0},\underline{e_1},\underline{x})&=d^\rightarrow \omega(e)({\underline{\infty}},\underline{0},\underline{e_1},\underline{x})\\
&=-\omega(e)(\underline{0},\underline{e_1},\underline{x})+\omega(e)(\underline{\infty},\underline{e_1},\underline{x})-\omega(e)(\underline{\infty},\underline{0},\underline{x})+\omega(e)(\underline{\infty},\underline{0},\underline{e_1})\ .
\end{align*}
The negative sign is again due to the weight used to define $d^\rightarrow$. By our computation of $\omega$ in (\ref{omega}), we immediately see that the fourth summand vanishes. In order to compute the remaining three summands, since we only know the value of $\omega(g)$ when evaluated on $(\underline{\infty},\underline{0},\underline{e_1})$, we need to use transitivity and the invariance of $\omega$ to replace each summand by an appropriate evaluation on this particular triple. More precisely, given any triple $(\underline{x},\underline{y},\underline{z})\in (G/P)^3$,  we can choose $g_{\underline{x}\underline{y}\underline{z}}\in G$ to be an element such that 
$$g_{\underline{x}\underline{y}\underline{z}}.\underline{\infty}=\underline{x}, \ g_{\underline{x}\underline{y}\underline{z}}.\underline{0}=\underline{y} \ \mathrm{and} \ g_{\underline{x}\underline{y}\underline{z}}.\underline{e_1}=\underline{z}.$$ 
In particular, for any such choice we obtain
\begin{eqnarray*}
\Omega_3({\underline{\infty}},\underline{0},\underline{e_1},\underline{x})&=&-\omega(g_{\underline{0}\underline{e_1}\underline{x}}^{-1})+\omega(g_{\underline{\infty}\underline{0}\underline{x}}^{-1})-\omega(g_{\underline{\infty}\underline{0}\underline{x}}^{-1})= \\
&=&\alpha_G(g_{\underline{0}\underline{e_1}\underline{x}}^{-1},w_0^{-1}g_{\underline{0}\underline{e_1}\underline{x}}^{-1},w_0^{-1}n^{-1}g_{\underline{0}\underline{e_1}\underline{x}}^{-1}) \\
&&-\alpha_G(g_{\underline{\infty}\underline{0}\underline{x}}^{-1},w_0^{-1}g_{\underline{\infty}\underline{0}\underline{x}}^{-1},w_0^{-1}n^{-1}g_{\underline{\infty}\underline{0}\underline{x}}^{-1}) \\
&&+\alpha_G(g_{\underline{\infty}\underline{0}\underline{x}}^{-1},w_0^{-1}g_{\underline{\infty}\underline{0}\underline{x}}^{-1},w_0^{-1}n^{-1}g_{\underline{\infty}\underline{0}\underline{x}}^{-1}) .
\end{eqnarray*}

We have already considered choices of such isometries $g_{\underline{0}\underline{e_1}\underline{x}}, g_{\underline{\infty}\underline{0}\underline{x}} ,g_{\underline{\infty}\underline{0}\underline{x}}$ in Section \ref{Isom of Hn} (\ref{eq:three:group:elements}) on each factor and further computed in Proposition \ref{prop: pi A projections}, still on every factor, all the corresponding $\pi_A$-projection of all the group elements appearing in this last expression of $\Omega_3$, that is for each of $g_{\underline{0}\underline{e_1}\underline{x}}^{-1}, g_{\underline{\infty}\underline{0}\underline{x}}^{-1} ,g_{\underline{\infty}\underline{0}\underline{x}}^{-1}$ and their left multiplication by $w_0^{-1}$ and $w_0^{-1}n^{-1}$. We can thus conclude that
\begin{eqnarray*}
\Omega_3(\underline{\infty},\underline{0},\underline{e_1},\underline{x})&=&{\alpha}\left(\left(a_{\frac{1}{\|x_i\|}}\right)_{i=1}^k,\left(a_{\|x_i\|}\right)_{i=1}^k,\left(a_{\frac{\|x_i\|}{1+\|x_i\|^2}}\right)_{i=1}^k\right)\\
&&-{\alpha}\left(\left(a_{\frac{1}{\|x_i-e_1\|}}\right)_{i=1}^k,\left(a_{\frac{\|x_i-e_1\|}{2}}\right)_{i=1}^k,\left(a_{\frac{\|x_i-e_1\|}{1+\|x_i\|^2}}\right)_{i=1}^k\right)\\
&&+{\alpha}\left(\left(a_{\frac{\|x_i\|}{\|x_i-e_1\|}}\right)_{i=1}^k,\left(a_{\frac{\|x_i-e_1\|}{2\|x_i\|}}\right)_{i=1}^k,\left(a_{\frac{\|x_i\|\|x_i-e_1\|}{1+\|x_i\|^2}}\right)_{i=1}^k\right) .
\end{eqnarray*}
We now replace the homogeneous cocycle ${\alpha}$ by its inhomogeneous variant to obtain 
\begin{eqnarray*}
\Omega_3(  \underline{\infty},\underline{0},\underline{e_1},\underline{x}  )&=&\widetilde{\alpha}\left( \left( a_{\|x_i\|^2}\right)_{i=1}^k, \left(a_{\frac{1}{1+\|x_i\|^2}}\right)_{i=1}^k\right)\\
&&-\widetilde{\alpha}\left(\left(a_{\frac{\|x_i-e_1\|^2}{2}}\right)_{i=1}^k,\left(a_{\frac{2}{1+\|x_i\|^2}}\right)_{i=1}^k\right)\\
&&+\widetilde{\alpha}\left(\left(a_{\frac{\|x_i-e_1\|^2}{2\|x_i\|^2}}\right)_{i=1}^k,\left(a_{\frac{2\|x_i\|^2}{1+\|x_i\|^2}}\right)_{i=1}^k\right) \\
&=&4\widetilde{\alpha}\left(\left(a_{\|x_i\|}\right)_{i=1}^k,\left(a_{\|x_i-e_1\|}\right)_{i=1}^k\right)  ,
\end{eqnarray*}
where for the last equality we have just used repeatedly that 
$$\begin{array}{lll}
\widetilde{\alpha}(ab,c)=\widetilde{\alpha}(a,c)+\widetilde{\alpha}(b,c), &\widetilde{\alpha}(a,bc)=\widetilde{\alpha}(a,b)+\widetilde{\alpha}(a,c),&\\
\widetilde{\alpha}(a^m,b^n)=nm\widetilde{\alpha}(a,b), &\widetilde{\alpha}(a,b)=-\widetilde{\alpha}(b,a) ,&\widetilde{\alpha}(a,a)=0,
\end{array}$$
for any $a,b,c\in A$ and $n,m\in \mathbb{R}$.

Finally observe that 
$$
b_i({\infty},0,e_1,x_i)=\|x_i\| \ . 
$$
and 
$$
{b}_i(0,e_1,x,{\infty})=\|x_i\|/\|x_i-e_1\|,
$$
so that we can rewrite $\Omega$ as 
$$\Omega_3(  \underline{\infty},\underline{0},\underline{e_1},\underline{x}  )=4\widetilde{\alpha}\left(\left(a_{b_i({\infty},0,e_1,x_i)}\right)_{i=1}^k,\left(a_{{b}_i(0,e_1,x,{\infty})}\right)_{i=1}^k\right) ,$$
which by $3$-transitivity, the $G$-invariance of $\Omega(e)$ and the $\mathrm{Isom}(\mathbb{H}^{n_i})$-invariance of the crossratios $b_i$ finishes the proof of the theorem.

\subsection{Proof of injectivity of the comparison map (Theorem \ref{injectivity for products}) for products of isometry groups of real hyperbolic space}\label{proof of thm injectivitiy}

\begin{lem}\label{lemma injectivity comparison actions} Let $G=\prod_{i=1}^k \mathrm{Isom}^+(\mathbb{H}^{n_i})$, where $n_i \geq 2$ for $1 \leq i \leq k$. Then Conjecture \ref{Conj injectivity boundary} is true in degree $3$, namely the comparison map
$$c_G:H^3_{m,b}(G\curvearrowright G/P)\longrightarrow H^3_m(G\curvearrowright G/P)$$
is injective.
\end{lem}

\begin{proof} If $f\in L^{\infty}((G/P)^4)$ is a $G$-invariant cocycle representing a cohomology class in $H^3_{m,b}(G\curvearrowright G/P)$  lying in the kernel of this comparison map, then it is the coboundary $f=\delta h$ of some not necessarily bounded $G$-invariant $h\in L^0((G/P)^3)$. Since the action of $G$ on triples of distinct points has a finite number of orbits (for $G=\mathrm{Isom}^+(\mathbb{H}^n)$ there is $1$ orbit when $n\geq 3$ and $2$ orbits when $n=2$), so any invariant cochain in degree $2$, and hence $h$, is bounded in either cases. 
\end{proof}

\begin{prop} \label{Omega3 unbounded hyperbolic} For any $\alpha\neq 0$, the cocycle 
$$s(\alpha_\mathfrak{a}):(x_0,\dots,x_3)\mapsto \alpha_{\mathfrak{a}}(\log(b(x_0,x_1,x_2,x_3)),\log(b(x_1,x_2,x_3,x_0))),$$
where $s$ is the section exhibited in Theorem \ref{Theorem 2 to 3 for prod of Hn} is unbounded.
\end{prop}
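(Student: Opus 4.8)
The plan is to exploit the product structure of the cross ratios. Since $b_i(x_0,\dots,x_3)$ depends only on the $i$-th coordinates of the points $x_0,\dots,x_3\in\prod_{j}\partial\mathbb{H}^{n_j}$, the two vectors
$$u=(\log b_i(x_0,x_1,x_2,x_3))_{i=1}^k,\qquad v=(\log b_i(x_1,x_2,x_3,x_0))_{i=1}^k\in\mathfrak{a}\cong\mathbb{R}^k$$
can be prescribed factor by factor independently. Writing $c_{ij}:=\alpha_\mathfrak{a}(e_i,e_j)$ in the standard basis $e_1,\dots,e_k$ of $\mathfrak{a}$, we have
$$s(\alpha_\mathfrak{a})(x_0,\dots,x_3)=\alpha_\mathfrak{a}(u,v)=\sum_{i<j}c_{ij}(u_iv_j-u_jv_i),$$
and since $\alpha_\mathfrak{a}\neq 0$ we may fix a pair $p<q$ with $c_{pq}\neq 0$. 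The goal is then to push the configuration to infinity only in the factors $p$ and $q$, so that the single term $c_{pq}(u_pv_q-u_qv_p)$ dominates everything else.

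To carry this out I would first record the pairs $(u_i,v_i)$ attainable in a single factor $\mathrm{Isom}(\mathbb{H}^{n_i})$. Using the isometry invariance of $b_i$ to normalise $x_0=\infty$, $x_1=0$, $x_2=e_1$ and letting $x_3=x$ vary, one finds $u_i=\log\|x\|$ and $v_i=\log(\|x\|/\|x-e_1\|)$; specialising to $x=te_1$ with $t>0$ (admissible for every $n_i\ge 2$) gives $u_i=\log t$ and $v_i=\log(t/|t-1|)$. I would then take, for an integer $m\ge 2$: in the factor $p$ the value $t=m$, so that $u_p=\log m\to\infty$ and $v_p=\log\frac{m}{m-1}\to 0$; in the factor $q$ the value $t=1+\frac1m$, so that $u_q=\log\frac{m+1}{m}\to 0$ and $v_q=\log(m+1)\to\infty$; and in every remaining factor one fixed configuration of four distinct points, keeping $u_i,v_i$ constant.

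With these choices the leading term is $c_{pq}(u_pv_q-u_qv_p)\sim c_{pq}(\log m)^2$, since $u_pv_q=\log m\cdot\log(m+1)$ while $u_qv_p\to 0$. Every other summand of $\sum_{i<j}c_{ij}(u_iv_j-u_jv_i)$ is either constant (both indices outside $\{p,q\}$) or a fixed constant times one of $u_p,v_p,u_q,v_q$, hence $O(\log m)$. Therefore $s(\alpha_\mathfrak{a})$ takes values $\sim c_{pq}(\log m)^2\to\pm\infty$ along this sequence. To upgrade this from the (measure-zero) sequence to genuine unboundedness in $L^0$, I would invoke the continuity of the cross ratios on tuples of pairwise distinct boundary points: each configuration above has distinct points in each factor, so $s(\alpha_\mathfrak{a})$ is large on a whole neighbourhood of it, and such neighbourhoods have positive measure; hence $s(\alpha_\mathfrak{a})$ is not essentially bounded.

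The delicate step is the second paragraph. One must identify the attainable region of $(u_i,v_i)$ in a single factor and then arrange the sequence so that the bilinear quantity $c_{pq}(u_pv_q-u_qv_p)$ outgrows the parasitic cross terms coupling the factors $p,q$ to the fixed ones. In a $2$-dimensional factor the pairs $(u_i,v_i)$ cannot be placed exactly on the coordinate axes, so the clean choice $v_p=u_q=0$ is unavailable and one is forced into the approximate blow-up above, together with the comparison of the $(\log m)^2$ main term against the $O(\log m)$ remainder. This quantitative comparison, rather than any individual computation, is where the care lies.
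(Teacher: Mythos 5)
Your proof is correct, and it diverges from the paper's precisely at the one step that requires care. Both arguments begin identically: restrict to normalized tuples $(\underline{\infty},\underline{0},\underline{e_1},x)$, where by the computations of Section \ref{proof of theorem 2} the cocycle becomes $\sum_{i<j}c_{ij}(u_iv_j-u_jv_i)$ with $u_i=\log\|x_i\|$, $v_i=\log\left(\|x_i\|/\|x_i-e_1\|\right)$, expand $\alpha_\mathfrak{a}$ in the coordinate basis, and single out a nonvanishing coefficient $c_{pq}\neq 0$ (the paper's $t_{12}\neq 0$). The difference lies in how the parasitic cross terms are beaten. The paper varies only the first factor: it writes the value as $\log\|x_1\|\,C_1-\log\|e_1-x_1\|\,C_2+C_3$, with $C_1,C_2,C_3$ depending only on $x_2,\dots,x_k$, arranges by a sign bookkeeping ($\|x_j\|>1$ if $t_{1j}>0$, $\|x_j\|<1$ if $t_{1j}<0$) that the aggregated coefficient $C_2=\sum_{j}t_{1j}\log\|x_j\|$ is nonzero, and lets $x_1\to e_1$, producing divergence linear in $-\log\|e_1-x_1\|$. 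You instead blow up two factors simultaneously at different rates ($t=m$ in factor $p$, $t=1+\frac1m$ in factor $q$), so the main term grows like $(\log m)^2$ while every other term is $O(\log m)$; no sign arrangement is needed because scale separation does all the work. Both are valid: the paper's argument is marginally shorter and only degenerates one coordinate, while yours avoids any case analysis on the signs of the coefficients and, moreover, makes explicit the passage from divergence along a sequence of configurations to essential unboundedness in $L^0$ (continuity of the cross ratios on the open set of generic tuples, hence positive measure of the sets where the cocycle is large) --- a point the paper leaves implicit. One cosmetic remark: your aside that in a $2$-dimensional factor the pair $(u_i,v_i)$ cannot be placed exactly on the coordinate axes is slightly imprecise (e.g.\ $t=\tfrac12$ gives $v_i=0$); what your argument actually needs, and correctly uses, is that one cannot make $v_p=0$ while $u_p\to\infty$ in dimension $2$, whereas for $n_i\geq 3$ the clean choice is available by taking $x$ on the bisector hyperplane $\|x\|=\|x-e_1\|$.
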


Before proving the unboundedness, we observe that this is sufficient to prove Conjecture \ref{Conj Gap} for $G$. Indeed any cohomology class in  $NH^3_m(G\curvearrowright G/P)$ is represented, by Theorem \ref{Theorem 2 to 3 for prod of Hn}, up to a coboundary, by a cocycle as in the proposition. By Lemma \ref{lemma injectivity comparison actions} any coboundary in degree $3$ is bounded, and the sum of a bounded and unbounded function is clearly unbounded. We thus have:

\begin{cor} \label{cor bdd deg 3} 
Conjecture \ref{Conj Gap} is true in degree $3$ for $G=\prod_{i=1}^k \mathrm{Isom}^+(\mathbb{H}^{n_i})$, where $n_i \geq 2$ for $1 \leq i \leq k$.
\end{cor}

Observe that Lemma \ref{lemma injectivity comparison actions} and Corollary \ref{cor bdd deg 3} prove Theorem  \ref{injectivity for products} for $G$. 

\begin{proof}[Proof of Proposition  \ref{Omega3 unbounded hyperbolic}] The group $(\wedge^2(\mathfrak{a}))^*$ has as basis the $2$ by $2$ determinants on the projections on pairs of factors in $\mathfrak{a}=\oplus_{i=1}^k \mathfrak{a}_i$, which viewed as inhomogeneous cocycles on $A$ take the form
$$
\alpha_{ij}:A \times A \rightarrow \bbR \ , \ \ \alpha_{ij}(a,a'):=\det \left( \begin{array}{cc} \log|a_i| & \log|a'_i| \\ \log|a_j| & \log|a'_j| \\ \end{array} \right) , 
$$
for $1\leq i<j\leq k$, $a=(a_1,\dots,a_k), a=(a'_1,\dots,a'_k)\in A$. We denote by $\Omega_{ij}:(G/P)^4\rightarrow \mathbb{R}$ the image of $\alpha_{ij}$ under the section $s$ of Theorem \ref{Theorem 2 to 3 for prod of Hn}. In particular, 
$$\Omega_{ij}(\underline{\infty},\underline{0},\underline{e_1},x)=\det \left( \begin{array}{cc} \log\|x_i\| & \log\|e_1-x_i\| \\ \log\|x_j\| & \log\|e_1-x_j\| \\ \end{array} \right) , $$
for any $x=(x_1,\dots,x_k)\in G/P=\Pi G_i/P_i$. Any class $\Omega$ in the image of the section $s$ of Theorem \ref{Theorem 2 to 3 for prod of Hn}  is a linear combination $\Omega=\sum_{i<j}t_{ij}\Omega_{ij}$. We assume that $\Omega\neq 0$ so that at least one of the coefficients $t_{ij}\neq 0$. By symmetry we can suppose that $t_{12}\neq 0$. We claim that there exists a subset of positive measure of $G_2/P_2\times \dots \times G_k/P_k$ satisfying 
$$\sum_{j=2}^k t_{1j} \log \|x_j\| >0.$$
Indeed, just choose $x_j\in G_j/P_j=\mathbb{R}^{n_j-1}$ such that $t_{12} \log \|x_2\| > 0$ for $j=2$, and $t_{1j} \log \|x_j\| \geq 0$ for $3\leq j\leq k$. This is simply achieved by choosing $x_j$ such that $$\begin{array}{ll}
\| x_j\| >1&\mathrm{ if \ }t_{1j}>0,\\
\mathrm{arbitrary}&\mathrm{if \ } t_{1j}=0,\\
\| x_j\| <1&\mathrm{ if \ }t_{1j}<0.
\end{array} $$
Now for any such $(x_2,\dots,x_k)$, consider $x=(x_1,\dots,x_k)\in G/P$ with $x_1\in G_1/P_1=\mathbb{R}^{n_1-1}$. We have
\begin{align*}
\Omega(\underline{\infty},\underline{0},\underline{e_1},x)=&\sum_{j=2}^k t_{1j} \det \left( \begin{array}{cc} \log\|x_1\| & \log\|e_1-x_1\| \\ \log\|x_j\| & \log\|e_1-x_j\| \\ \end{array} \right)\\
& + \mathrm{a \ constant \ depending \ on \ } x_2,\dots,x_k\\
=& \log\|x_1\| \underbrace{(\sum_{j=2}^k t_{1j} \log \| e_1-x_j\|)}_{C_1} -\log\|e_1-x_1\| \underbrace{(\sum_{j=2}^k t_{1j} \log \| x_j\|)}_{C_2}+C_3\\
=&\log\|x_1\| C_1-\log\|e_1-x_1\| C_2+C_3,
\end{align*}
where $C_1,C_2,C_3$ depend solely on $x_2,\dots,x_k$ which have been chosen so that $C_2\neq 0$. Letting now $x_1$ tend to $e_1$ shows that $\Omega$ is unbounded.
\end{proof}

\subsection{Proof of Theorem \ref{Theorem 2 to 4 for prod of Hn}}\label{proof of thm 3}

As before, let $\alpha_\mathfrak{a} \in \wedge^2\mathfrak{a}^\ast$ be an alternating form, $\widetilde{\alpha}:A^2\rightarrow \mathbb{R}$ be the corresponding inhomogenous $2$-cocycle 
$$\widetilde{\alpha}(a_1,a_2):= \alpha_\mathfrak{a}(\log a_1,\log a_2)$$
and $\alpha:A^3\rightarrow \mathbb{R}$ be the homogenous $A$-invariant cocycle 
$$\alpha(a_0,a_1,a_2)=\widetilde{\alpha}(a_0^{-1}a_1,a_1^{-1}a_2).$$
%Observe that
%WHATEVER PROPERTIES I NEED FOR $\alpha$
The cocycle
$$\alpha_G:G^3\longrightarrow \mathbb{R}$$
defined by
$$\alpha_G(g_0,g_1,g_2)=\alpha(\pi_A(g_0),\pi_A(g_1),\pi_A(g_2))$$
again corresponds to $\alpha_\mathfrak{a}$ in $H^2(C^{2,1},d^\uparrow)\cong H^2_m(P)\cong H^2_m(A)\cong (\wedge^2 \mathfrak{a})^*$. According to Lemma \ref{Injection} , we need to follow $\overline{\overline{\alpha_G}}$ through the maps
\begin{equation}\label{eq:diagram:chasing}
\xymatrix{
\overline{\overline{\alpha_G}} \in C^{2,2} \ar[r] & d^\rightarrow \overline{\overline{\alpha_G}}=d^\uparrow \beta \in C^{2,3} & &\\
& h^{2,3}(d^\rightarrow\alpha_G)=:\beta \in C^{1,3} \ar[u] \ar[r] & d^\rightarrow \beta=d^\uparrow \omega \in C^{1,4} &\\
& & h^{1,4}(d^\rightarrow \beta)=:\omega \in C^{0,4} \ar[u] \ar[r] & d^\rightarrow \omega =:\Omega_4\in C^{0,5} ,
}
\end{equation}
and evaluate $\Omega_4$ on $e$. As before we are going to subdivide the diagram chase into several steps. 

\subsection*{Computation of $\beta$} By Lemma \ref{d 2 to 3} we know that $d^\rightarrow \alpha_G$ is as an element of $L^0(G^3)=C^{2,3}$ given by 
\begin{align*}
d^\rightarrow \alpha_G(g_0,g_1,g_2)=&-\alpha_G(nw_0^{-1}g_0,nw_0^{-1}g_1,nw_0^{-1}g_2)+\alpha_G(n^{-1}g_0,n^{-1}g_1,n^{-1}g_2)\\
&-\alpha_G(g_0,g_1,g_2)\\
=&-\alpha_G(w_0^{-1}g_0,w_0^{-1}g_1,w_0^{-1}g_2) ,
\end{align*}
where we used the $P$-invariance of $\alpha_G$. Recall that the signs in the above formula are due to the weight used to define $d^\rightarrow$. 

Observe that the homotopy $h^{p+1,3}:C^{p,3}\rightarrow C^{p-1,3}$ is as a map from $L^0(G^{p+1})=C^{p,3}\rightarrow C^{p-1,3}=L^0(G^p)$ simply given by 
$$h^{p+1,3}(\gamma)(g_0,\dots,g_{p-1})=\gamma(e,g_0,\dots,g_{p-1}),$$
for $\gamma\in G^{p+1}$. Applying this to $\gamma=d^\rightarrow \alpha$ leads to
\begin{align*}
\beta(g_0,g_1)=&h^{2,3}(d^\rightarrow \alpha_G)(g_0,g_1)\\
=&(d^\rightarrow \alpha_G)(e,g_0,g_1)\\
=&-\alpha_G(e,w_0^{-1}g_0,w_0^{-1}g_1) .
\end{align*}

%We remark again that the evaluation of the measurable function $d^\rightarrow \alpha_G$ on the triples of the form $(e,g_0,g_1)$  makes sense by the $G$-invariance of $d^\rightarrow \alpha_G$ viewed as an element in $C^{2,3}$.

%is only formal. Such notation actually referes to the $G$-invariance of the cocycle $\alpha_G$ viewed as an element of $C^{2,3}$. To write correctly $\beta$ one should write
%$$
%\beta(g_0,g_1)(g.\infty,g.0,g.e_1)=\alpha_G(w_0^{-1},w_0^{-1}g^{-1}g_0,w_0^{-1}g^{-1}g_1) \ .
%$$

\subsubsection*{Computation of $\omega$} Now we want to compute the differential $d^\rightarrow \beta\in C^{1,4}$. By $3$-transitivity any $4$-tuple of points in $G/P$ is in the orbit of  $\underline{\infty},\underline{0},\underline{e_1},x$, for some $x\in G/P$. Since $d^\rightarrow \beta$ is $G$-invariant it will thus be sufficient to evaluate it on such $4$-tuples. As above, for any triple of distinct points $x,y,z \in G/P$, we let $g_{xyz}$ be an element in $G$ such that $g_{xyz}.\underline{\infty}=x,g_{xyz}.\underline{0}=y,g_{xyz}.\underline{e_1}=z$. We can thus express $d^\rightarrow \beta$ as
\begin{align*}
d^\rightarrow \beta(g_0,g_1)(\underline{\infty},\underline{0},\underline{e_1},x)=&\beta(g_0,g_1)(\underline{0},\underline{e_1},x)-\beta(g_0,g_1)(\underline{\infty},\underline{e_1},x)\\
&+\beta(g_0,g_1)(\underline{\infty},\underline{0},x)+\beta(g_0,g_1)(\underline{\infty},\underline{0},\underline{e_1})\\
=&\beta(g_{\underline{0}\underline{e_1}x}^{-1}g_0,g_{\underline{0}\underline{e_1}x}^{-1}g_1)-\beta(g_{\underline{\infty} \underline{e_1}x}^{-1}g_0,g_{\underline{\infty} \underline{e_1}x}^{-1}g_1)\\
&+\beta(g_{\underline{\infty}\underline{0}x}^{-1}g_0,g_{\underline{\infty}\underline{0}x}^{-1}g_1)+\beta(g_0,g_1)  .
\end{align*}

Finally we obtain
\begin{align*}
\omega(g)(\underline{\infty},\underline{0},\underline{e_1},x):=&h^{1,4}(d^\rightarrow \beta)(g)(\underline{\infty},\underline{0},\underline{e_1},x)\\
=&d^\rightarrow \beta(e,g)(\underline{\infty},\underline{0},\underline{e_1},x)\\
=&\beta(g_{\underline{0}\underline{e_1}x}^{-1},g_{\underline{0}\underline{e_1}x}^{-1}g)-\beta(g_{\underline{\infty} \underline{e_1}x}^{-1},g_{\underline{\infty}\underline{ e_1}x}^{-1}g)+\beta(g_{\underline{\infty}\underline{0}x}^{-1},g_{\underline{\infty}\underline{0}x}^{-1}g)  ,
\end{align*}
where we have omitted the last term $\beta(e,g)$ since it vanishes. %All \marginpar{Uniformize}the remarks we made about the evaluation of the measurable function on subsets of zero measure holds in this context: we adopt only a formal way of writing which refers to the $G$-invariance of the function. 

\subsubsection*{Computation of $d^\rightarrow \omega$} We are finally ready to compute our desired cocycle $\Omega_4=d^\rightarrow \omega(e)$. Before starting, recall that an oriented $5$-tuple of distinct points is in the $G$-orbit of $\underline{\infty},\underline{0},\underline{e_1},x,y$, for some $x\neq y\in G/P\setminus\{\underline{\infty},\underline{0},\underline{e_1}\}$. For such $x,y$, we set
$$
\lambda(x,y):=g_{\underline{\infty}\underline{0}x}^{-1}.y , \ \ \mu(x,y):=g_{\underline{\infty} \underline{e_1}x}^{-1}.y , \ \ \nu(x,y):=g_{\underline{0}\underline{e_1} x}^{-1}. y ,
$$
where $g_{\underline{\infty}\underline{0}x}$, $g_{\underline{\infty} \underline{e_1}x}$ and $g_{\underline{0}\underline{e_1} x}$ are chosen as above. We have
\begin{align*}
\Omega_4(\underline{\infty},\underline{0},\underline{e_1},x,y)=&d^\rightarrow \omega(e)(\underline{\infty},\underline{0},\underline{e_1},x,y)\\
=&-\omega(e)(\underline{0},\underline{e_1},x,y)+\omega(e)(\underline{\infty},\underline{e_1},x,y)-\omega(e)(\underline{\infty},\underline{0},x,y)\\
&+\omega(e)(\underline{\infty},\underline{0},\underline{e_1},y)-\omega(e)(\underline{\infty},\underline{0},\underline{e_1},x)\\
=&-\omega(g_{\underline{0}\underline{e_1}x}^{-1})(\underline{\infty},\underline{0},\underline{e_1},\nu(x,y))+\omega(g_{\underline{\infty} \underline{e_1}x}^{-1})(\underline{\infty},\underline{0},\underline{e_1},\mu(x,y))\\
&-\omega(g_{\underline{\infty}\underline{0}x}^{-1})(\underline{\infty},\underline{0},\underline{e_1},\lambda(x,y)) ,
\end{align*}
where we used $G$-invariance of $\omega$ for the last equality and the fact that both the fourth and the fifth summands vanish. We will now compute the three remaining summands separately. 

\subsubsection*{Computation of $\omega(g_{\underline{0}\underline{e_1}x}^{-1})(\underline{\infty},\underline{0},\underline{e_1},\nu(x,y))$}  

We have that 
\begin{align*}
&\omega(g_{\underline{0}\underline{e_1}x}^{-1})(\underline{\infty},\underline{0},\underline{e_1},\nu(x,y))=\\
=&\beta(g_{0e_1\nu(x,y)}^{-1},g_{0e_1\nu(x,y)}^{-1}g_{ 0e_1 x}^{-1})-\beta(g_{\infty e_1\nu(x,y)}^{-1},g_{\infty e_1\nu(x,y)}^{-1}g_{0e_1 x}^{-1})
+\beta(g_{\infty 0\nu(x,y)}^{-1},g_{\infty 0\nu(x,y)}^{-1}g_{0 e_1x}^{-1})\\
=&-\alpha_G(e,w_0^{-1}g_{0e_1\nu(x,y)}^{-1},w_0^{-1}g_{0e_1\nu(x,y)}^{-1}g_{0e_1 x}^{-1})+\alpha_G(e,w_0^{-1}g_{\infty e_1\nu(x,y)}^{-1},w_0^{-1}g_{\infty e_1\nu(x,y)}^{-1}g_{0e_1 x}^{-1})\\
&-\alpha_G(e,w_0^{-1}g_{\infty 0\nu(x,y)}^{-1},w_0^{-1}g_{\infty 0\nu(x,y)}^{-1}g_{0e_1 x}^{-1}) .
\end{align*}

The six different projections on $A$ have been exhibited in Proposition \ref{piA for nu} coordinatewise so that we simply obtain  
\begin{align*}
\omega(g^{-1}_{0e_1 x})(\infty,0,e_1,\nu(x,y))=&-\widetilde{\alpha}\left( \left(  a_{\frac{\|y_i-x_i\|}{2\|x_i\|\|y_i-e_1\|}}\right)_{i=1}^k
,\left( a_{\frac{2\|x_i\|\|x_i-e_1\|}{\|y_i-x_i\|(1+\|x_i\|^2)}}\right)_{i=1}^k
\right)\\
&+\widetilde{\alpha}\left(\left( a_{\frac{\|y_i-x_i\|}{2\|y_i\|\|x_i-e_1\|}}\right)_{i=1}^k
,\left( a_{\frac{2\|x_i\|\|x_i-e_1\|}{\|y_i-x_i\|(1+\|x_i\|^2)}}\right)_{i=1}^k
\right)\\
&-\widetilde{\alpha}\left(\left( a_{\frac{\|x_i\|\|y_i-e_1\|}{\|y_i\|\|x_i-e_1\|}}\right)_{i=1}^k
,\left( a_{\frac{\|x_i-e_1\|}{2\|x_i\|}}\right)_{i=1}^k
\right)\\
=&-\widetilde{\alpha}\left(\left( a_{\frac{\|x_i\|\|y_i-e_1\|}{\|y_i\|\|x_i-e_1\|}}\right)_{i=1}^k,\left( a_{\frac{\|x_i\|^2+1}{4\|x_i\|^2}}\right)_{i=1}^k \right).
\end{align*}

\subsubsection*{Computation of $\omega(g_{\underline{\infty} \underline{e_1}x}^{-1})(\underline{\infty},\underline{0},\underline{e_1},\mu(x,y)))$} We have that
\begin{align*}
\omega(g_{\underline{\infty} \underline{e_1}x}^{-1})(\underline{\infty},\underline{0},\underline{e_1},\mu(x,y))
=&\beta(g_{\underline{0} \underline{e_1}\mu(x,y)}^{-1},g_{\underline{0} \underline{e_1}\mu(x,y)}^{-1}g_{\underline{\infty}  \underline{e_1} x}^{-1})-\beta(g_{\underline{\infty}  \underline{e_1}\mu(x,y)}^{-1},g_{\underline{\infty}  \underline{e_1}\mu(x,y)}^{-1}g_{\infty  \underline{e_1} x}^{-1}) \\
&+\beta(g_{\underline{\infty} \underline{0}\mu(x,y)}^{-1},g_{\underline{\infty} \underline{0}\mu(x,y)}^{-1}g_{\underline{\infty}  \underline{e_1} x}^{-1})\\
=&-\alpha_G(e,w_0^{-1}g_{\underline{0} \underline{e_1}\mu(x,y)}^{-1},w_0^{-1}g_{\underline{0}e_1\mu(x,y)}^{-1}g_{\underline{\infty}  \underline{e_1} x}^{-1})\\
&+\alpha_G(e,w_0^{-1}g_{\underline{\infty}  \underline{e_1}\mu(x,y)}^{-1},w_0^{-1}g_{\underline{\infty} e_1\mu(x,y)}^{-1}g_{\underline{\infty}  \underline{e_1} x}^{-1})\\
&-\alpha_G(e,w_0^{-1}g_{\underline{\infty} \underline{0}\mu(x,y)}^{-1},w_0^{-1}g_{\underline{\infty}\underline{0}\mu(x,y)}^{-1}g_{\underline{\infty} \underline{e_1} x}^{-1}).
\end{align*}
We have already computed these six different projections on $A$ coordinatewise in Proposition \ref{piA for mu} so that we can just conclude that 
\begin{align*}
&\omega(g_{\underline{\infty} \underline{e_1}x}^{-1})(\underline{\infty},\underline{0},\underline{e_1},\mu(x,y))\\
=&-\widetilde{\alpha}\left(\left( a_{\frac{\|y_i-x_i\|}{2\|y_i-e_1\|}}\right)_{i=1}^k,\left( a_{\frac{2\|x_i-e_1\|}{\|x_i\|^2+1}}\right)_{i=1}^k\right)+\widetilde{\alpha}\left(\left( a_{\frac{\|y_i-x_i\|}{2\|x_i-e_1\|}}\right)_{i=1}^k,\left( a_{\frac{2\|x_i-e_1\|}{\|x_i\|^2+1}}\right)_{i=1}^k\right)\\
&-\widetilde{\alpha}\left(\left( a_{\frac{\|y_i-e_1\|}{\|x_i-e_1\|}}\right)_{i=1}^k,\left( a_{\frac{\|x_i-e_1\|}{2}}\right)_{i=1}^k\right) \\
=&-\widetilde{\alpha}\left(\left( a_{\frac{\|y_i-e_1\|}{\|x_i-e_1\|}}\right)_{i=1}^k,\left( a_{\frac{\|x_i\|^2+1}{4}}\right)_{i=1}^k\right) .
\end{align*}

\subsubsection*{Computation of $\omega(g_{\underline{\infty}\underline{0}x}^{-1})(\underline{\infty},\underline{0},\underline{e_1},\lambda(x,y))$}
We have that
\begin{align*}
&\omega(g_{\underline{\infty}\underline{0}x}^{-1})(\underline{\infty},\underline{0},\underline{e_1},\lambda(x,y))\\
=&\beta(g_{0e_1\lambda(x,y)}^{-1},g_{0e_1\lambda(x,y)}^{-1}g_{\infty 0 x}^{-1})-\beta(g_{\infty e_1\lambda(x,y)}^{-1},g_{\infty e_1\lambda(x,y)}^{-1}g_{\infty 0 x}^{-1})
+\beta(g_{\infty 0\lambda(x,y)}^{-1},g_{\infty 0\lambda(x,y)}^{-1}g_{\infty 0 x}^{-1})\\
=&-\alpha_G(e,w_0^{-1}g_{0e_1\lambda(x,y)}^{-1},w_0^{-1}g_{0e_1\lambda(x,y)}^{-1}g_{\infty0 x}^{-1})+\alpha_G(e,w_0^{-1}g_{\infty e_1\lambda(x,y)}^{-1},w_0^{-1}g_{\infty e_1\lambda(x,y)}^{-1}g_{\infty 0 x}^{-1})\\
&-\alpha_G(e,w_0^{-1}g_{\infty 0\lambda(x,y)}^{-1},w_0^{-1}g_{\infty 0\lambda(x,y)}^{-1}g_{\infty 0 x}^{-1})  .
\end{align*}
We have already recorded these six different projections on $A$ coordinatewise in Proposition \ref{piA for lambda} so that we can just conclude that

\begin{align*}
\omega(g_{\underline{\infty}\underline{0}x}^{-1})(\underline{\infty},\underline{0},\underline{e_1},\lambda(x,y))=&-\widetilde{\alpha}\left( \left( a_{\frac{\|y_i-x_i\|}{2\|y_i\|}}\right)_{i=1}^k
,\left( a_{\frac{2\|x_i\|}{1+\|x_i\|^2}}\right)_{i=1}^k
\right)\\
&+\widetilde{\alpha}\left( \left( a_{\frac{\|y_i-x_i\|}{2\|x_i\|}}\right)_{i=1}^k
, \left( a_{\frac{2\|x_i\|}{\|x_i\|^2+1}} \right)_{i=1}^k
\right)\\
&-\widetilde{\alpha}\left(\left( a_{\frac{\|y_i\|}{\|x_i\|}}\right)_{i=1}^k
,\left(  a_{\|x_i\|}\right)_{i=1}^k
\right)\\
=&-\widetilde{\alpha}\left(\left( a_{\frac{\|y_i\|}{\|x_i\|}}\right)_{i=1}^k
,\left( a_{\frac{\|x_i\|^2+1}{2}}\right)_{i=1}^k
\right)  .
\end{align*}

\subsection*{Conclusion.}

We can finally compute $\Omega_4$ putting everything together:
\begin{align*}
\Omega_4(\underline{\infty},\underline{0},\underline{e_1},x,y)=& 
\widetilde{\alpha}\left(\left( a_{\frac{\|x_i\|\|y_i-e_1\|}{\|y_i\|\|x_i-e_1\|}}\right)_{i=1}^k,\left( a_{\frac{\|x_i\|^2+1}{4\|x_i\|^2}}\right)_{i=1}^k \right) \\
&-\widetilde{\alpha}\left(\left( a_{\frac{\|x_i\|\|y_i-e_1\|}{\|y_i\|\|x_i-e_1\|}}\right)_{i=1}^k,\left( a_{\frac{\|x_i\|^2+1}{4\|x_i\|^2}}\right)_{i=1}^k \right)\\
&+\widetilde{\alpha}\left(\left( a_{\frac{\|y_i\|}{\|x_i\|}}\right)_{i=1}^k,\left( a_{\frac{\|x_i\|^2+1}{2}}\right)_{i=1}^k\right)\\
=&\widetilde{\alpha}\left(\left( a_{\|x_i\|}\right)_{i=1}^k
,\left(a_{\frac{\|x_i\|\|y_i-e_1\|}{\|y_i\|\|x_i-e_1\|}}\right)_{i=1}^k
\right)\\
&+\widetilde{\alpha} \left( \left( a_\frac{\|y_i\|}{\|x_i\|}\right)_{i=1}^k, (a_2)_{i=1}^k \right) .
\end{align*}
Now the second summand of this last expression is the coboundary of the function $\beta:(G/P)^4\rightarrow \mathbb{R}$ defined by
$$\beta(\underline{\infty},\underline{0},\underline{e_1},x)=\widetilde{\alpha}\left( \left( a_{\|x_i\|}\right)_{i=1}^k, (a_2)_{i=1}^k \right)$$
and the first summand is exactly the expression claimed in Theorem \ref{Theorem 2 to 4 for prod of Hn}.

\section{The spectral sequence for $\mathrm{SL}(3,\mathbb{K})$ for $\mathbb{K}=\mathbb{R},\mathbb{C}$}\label{SL3R}

\subsection{Configurations of flags in general position}\label{sec general position}\label{SL3R1}

Let $\mathbb{K}$ be either the real or the complex field. For $G=\mathrm{SL}(3,\mathbb{K})$, we take as minimal parabolic subgroup $P$ the subgroup of upper triangular matrices with entries in $\mathbb{K}$. The quotient $G/P$ is then naturally identified with the space of complete flags $\mathcal{FL}(3,\mathbb{K})$. Recall that a \emph{complete flag} $F\in \mathcal{FL}(3,\mathbb{K})$ in $\mathbb{K}^3$ is a sequence of nested subspaces
$$
F: \ (0)=F^0 \subset F^1 \subset F^2 \subset F^3=\mathbb{K}^3 ,
$$
where each linear subspace $F^i$ has dimension $\dim_{\mathbb{K}} F^i=i$. In this case, the flag is completely determined by the $1$ and $2$ dimensional subspaces. For this reason, we can alternatively denote a complete flag $F$ by a pair $(p,\ell)$, where $p \in P^2(\mathbb{K})$ is a point and $\ell \subset P^2(\mathbb{K})$ is a line passing through $p$. The subgroup $P$ is the stabilizer of the  \emph{canonical flag} $F_{can}$:
$$
F_{can}: \ (0) \subset \langle e_1\rangle  \subset \langle e_1,e_2\rangle  \subset \mathbb{K}^3 ,
$$
where $\{e_1,e_2,e_3\}$ denotes the canonical basis of $\mathbb{K}^3$. 

The natural action of $\mathrm{SL}(3,\mathbb{K})$ on $\mathcal{FL}(3,\mathbb{K})$ induces a diagonal action on the product $\mathcal{FL}(3,\mathbb{K})^{d+1}$. The space of \emph{configurations} of $(d+1)$-tuples of flags is the quotient 
$$
\mathcal{C}_{d+1}(\mathcal{FL}(3,\mathbb{K})):=\mathcal{FL}(3,\mathbb{K})^{d+1}/\mathrm{SL}(3,\mathbb{K}) .
$$
Given a $(d+1)$-tuple $(F_0,\cdots,F_d)$ of flags in $\mathcal{FL}(3,\mathbb{K})$, we denote its configuration class by $[F_0,\cdots,F_d]$. 

The cocycles we are going to define will never be defined everywhere but only on the measurable dense subset of flags in general position. A $(d+1)$-tuple of flags $F_0,\dots,F_d$ is in general position if, roughly speaking, the dimension of any possible intersection or subspace generated by the spaces from the tuple matches the expected dimension. In dimension $3$, we can simply (and equivalently) define a $(d+1)$-tuple of flags $F_0.\dots,F_d$, with $F_i=(p_i,\ell_i)$, to be in \emph{general position}  if 
\begin{itemize}
\item  $p_{i_1},p_{i_2},p_{i_3}$ are not aligned whenever $|\{i_1,i_2,i_3\}|=3$, 
\item  $\ell_{i_1},\ell_{i_2},\ell_{i_3}$ do not intersect in a unique point whenever $|\{i_1,i_2,i_3\}|=3$, 
\item $p_i\notin \ell_j$ whenever $i\neq j$. 
\end{itemize}
This is precisely the notion of very generic configurations from \cite{FW17}. Since the condition of general position is invariant along the $\mathrm{SL}(3,\mathbb{K})$-orbits, it makes sense to speak about configurations of tuples of flags in general position. We denote this space by $\mathcal{C}^{gen}_{\ast}(\mathcal{FL}(3,\mathbb{K}))$. 

We follow Falbel and Wang to introduce coordinates on the spaces of triples and $4$-tuples of flags in general position and refer to \cite{FW17}  for more details. In \cite{FW17} the definitions are given for tuples of complex flags, but it should be clear that the same definitions work for tuples of real flags as well. 

\begin{dfn} \label{dfn triple ratio}
Let $F_0,F_1,F_2 \in \mathcal{FL}(3,\mathbb{K})$ be a triple of flags in general position, where $F_i=(p_i,\ell_i)$ for $i=0,1,2$. We define the \emph{triple ratio} associated to them as 
$$
\tau(F_0,F_1,F_2):=-[\ell_0 \cap \ell_1, \ell_1 \cap \ell_2,p_1,\ell_1 \cap p_0 \cdot p_2]_{\ell_1} \in \mathbb{K}^\ast \setminus \{-1\} , 
$$
where $\ell_i \cap \ell_j$ is the intersection point between the lines and $p_0 \cdot p_2$ is the line passing through $p_0$ and $p_2$. The notation $[\cdot , \cdot  ,  \cdot , \cdot]_{\ell_1}$ refers to the usual cross ratio computed on the line $\ell_1$. (See Figure \ref{Figure triple ratio}.)
\end{dfn}

In view of  \cite[Lemma 3.5]{FW17}, this is equivalent to the original definition given by Falbel and Wang. The triple ratio $\tau$ of $(F_0,F_1,F_2)$ changes equivariantly with respect the action of a permutation $\sigma \in S_3$: more precisely, the triple ratio of $(F_{\sigma(0)},F_{\sigma(1)},F_{\sigma(2)})$ is given by $\tau^{\varepsilon(\sigma)}$, where $\varepsilon(\sigma)$ is the sign of the permutation $\sigma$. 

The triple ratio remains constant along $G$-orbits, hence it descends to a well-defined numerical invariant of configuration classes. By \cite[Proposition 3.1]{FW17} it determines an identification between the space of configurations in general position $\mathcal{C}^{gen}_3(\mathcal{FL}(3,\mathbb{K}))$ and  $\mathbb{K}^\ast \setminus \{-1\}$. Indeed, since $\mathrm{SL}(3,\mathbb{K})$ acts transitively on the triples given by two generic flags and one generic point in $P^2(\mathbb{K})$, any triple of flags lies in the same orbit as a triple $(F_0,F_1,F_2)$, where 
\begin{equation}\label{transitivity}
F_{can}=F_0=\langle e_1 \rangle  \subset \langle e_1,e_2 \rangle \subset \mathbb{K}^3 ,
\end{equation}
$$
F_1=\langle e_3 \rangle \subset \langle e_3,e_2 \rangle \subset \mathbb{K}^3 ,
$$
$$(F_2)^1=\langle e_1+e_2+e_3 \rangle .$$
Now for any $\tau \in \mathbb{K}^\ast \setminus \{-1\}$ there exists a unique way to complete $\langle e_1+e_2+e_3 \rangle$ to a flag $F_2$ such that $\tau(F_0,F_1,F_2)=\tau$. More precisely, the flag $F_2$ is given by 
\begin{equation}\label{Standard triple flags}
F_2=\langle e_1+e_2+e_3 \rangle \subset \langle e_1+e_2+e_3,(\tau+1)e_1 +\tau e_2 \rangle \subset \mathbb{K}^3 .
\end{equation}

%We now move to the description of cross ratios for a $4$-tuple of flags in general position.

\begin{dfn}\label{dfn cross ratios tuple}
Let $F_0,F_1,F_2,F_3 \in \mathcal{FL}(3,\mathbb{K})$ be a $4$-tuple of flags in general position, where $F_i=(p_i,\ell_i)$ for $i=0,\cdots,3$. Let $(i,j,s,t)$ be an even permutation of $(0,1,2,3)$. The $(i,j)$-\emph{cross ratio} is defined as
$$
z_{ij}:=[\ell_i,p_i \cdot p_j,p_i \cdot p_s,p_i \cdot p_t]_{p_i} \ ,
$$
where $p_i \cdot p_j$ refers to the line passing through $p_i$ and $p_j$. The cross ratio $[\cdot,\cdot,\cdot,\cdot]_{p_i}$ is computed on the line parametrizing all the lines passing through $p_i$. (See Figure \ref{Figure cross ratio}.)
\end{dfn}

\begin{figure}[htb]
    \begin{minipage}[t]{.45\textwidth}
        \centering
        \includegraphics[width=1.1\textwidth]{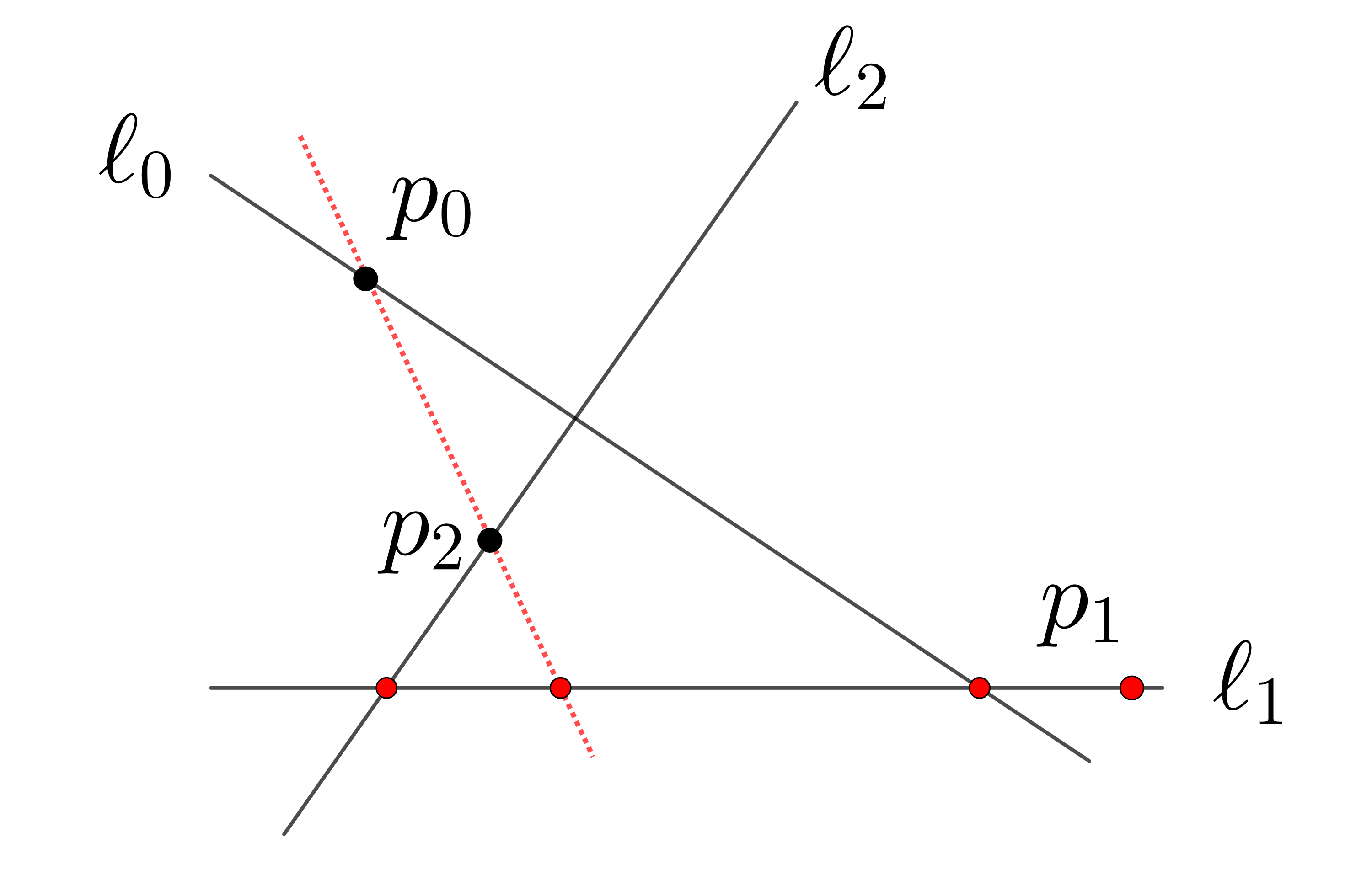}
        \caption{Triple ratio of a triple of flags}\label{Figure triple ratio}
    \end{minipage}
    \hfill
    \begin{minipage}[t]{.45\textwidth}
        \centering
        \includegraphics[width=1.1\textwidth]{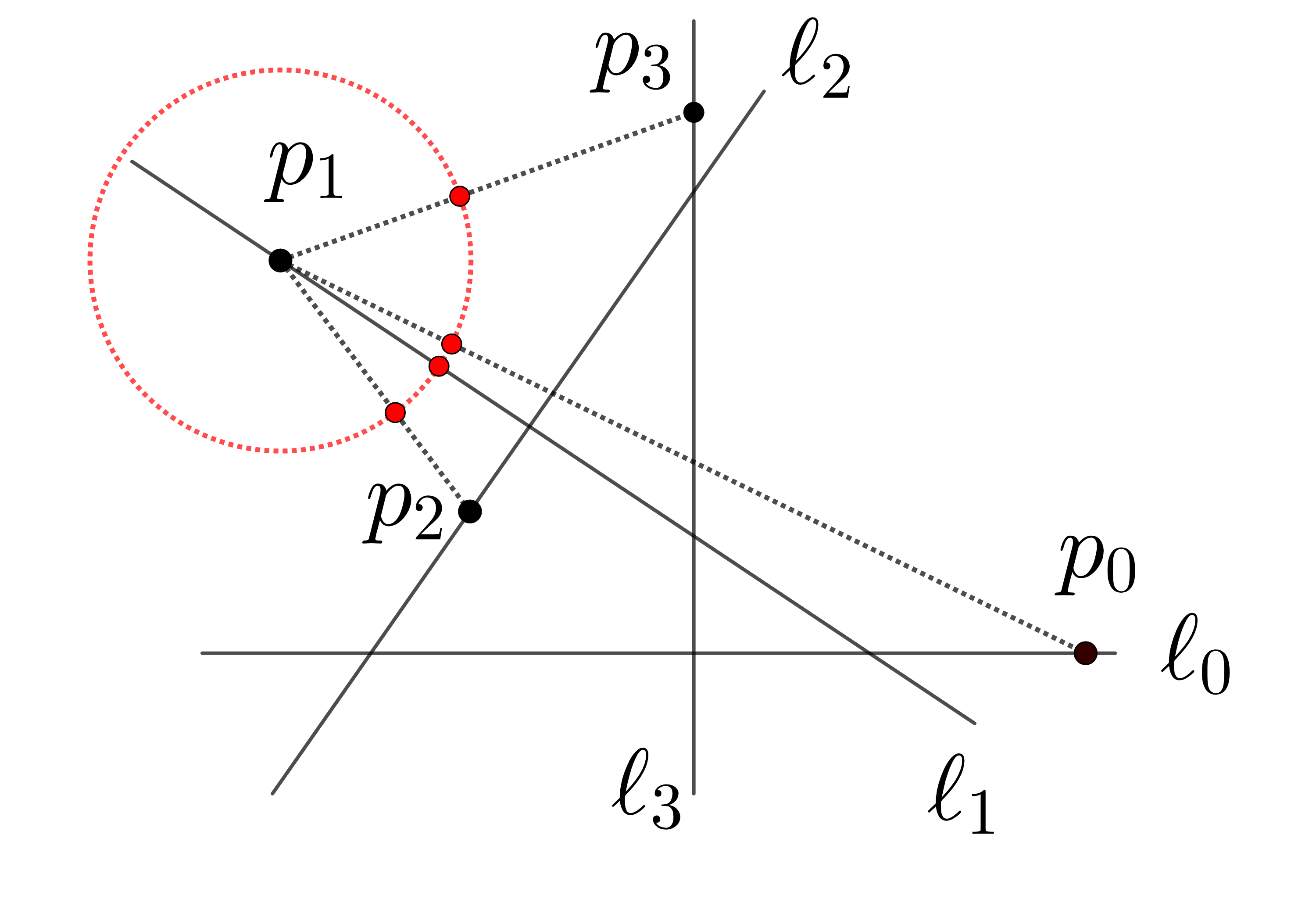}
        \caption{Cross ratio around the point $p_1$}\label{Figure cross ratio}
    \end{minipage}  
\end{figure}

The family of cross ratios given by Definition \ref{dfn cross ratios tuple} is constant along the $G$-orbits, hence it descends naturally to configuration classes. By \cite[Proposition 3.1]{FW17} the choice of cross ratios $(z_{01},z_{10},z_{23},z_{32})$ determines an isomorphism between the space of configurations in general position $\mathcal{C}^{gen}_4(\mathcal{FL}(3,\mathbb{K}))$ and $(\mathbb{K}^\ast \setminus \{1 \})^4$. Indeed, any $4$-tuple of generic flags lies in the same orbit as $(F_0,F_1,F_2,F_3)$, where $F_0,F_1,(F_2)^1$ are as in (\ref{transitivity}). Now for any $a,b,c,d \in \mathbb{K}^\ast \setminus \{ 1 \}$ there exist a unique way to complete $(F_2)^1$ to a flag and a unique flag $F_3$ such that the list of cross ratios $(z_{01},z_{10},z_{23},z_{32})$ is given by $(a,b,c,d)$. Indeed, it is sufficient to take
\begin{equation}\label{Standard four flags}
F_0=\langle e_1 \rangle  \subset \langle e_1,e_2 \rangle \subset \mathbb{K}^3  ,
\end{equation}
$$
F_1=\langle e_3 \rangle \subset \langle e_3,e_2 \rangle \subset \mathbb{K}^3  ,
$$
$$
F_2=\langle e_1+e_2+e_3 \rangle \subset \langle e_1+e_2+e_3,(\tau_{012}+1)e_1 +\tau_{012} e_2 \rangle \subset \mathbb{K}^3 ,
$$
$$
F_3=\langle ab \ e_1+a \ e_2+e_3 \rangle \subset \langle ab \ e_1+ a \ e_2+e_3,(\tau_{013}+1)b \ e_1+\tau_{013}e_2 \rangle \subset \mathbb{K}^3 .
$$
Here $\tau_{ijk}$ is the triple ratio of the configurations $[F_i,F_j,F_k]$. Notice that we can express such triple ratios in terms of $a,b,c,d$. By Falbel and Wang \cite[Equation 3.4.2]{FW17} we have
\begin{equation}\label{eq triple in terms cross}
\tau_{123}=\frac{d(c-1)}{b(d-1)}  , \ \ \tau_{023}=\frac{a(c-1)}{c(d-1)}  , 
\end{equation}
$$
\tau_{013}=\frac{b(a-1)}{d(b-1)}  , \ \ \tau_{012}=\frac{c(a-1)}{a(b-1)} .
$$

\subsection{The invariant cohomology of the maximal torus} \label{SL3R2}

In virtue of Theorem \ref{Thm Monod} we need first to compute the invariant cohomology of the (dual) Lie algebra of a maximal abelian torus $A$ in $G$. We can choose $A$ as the set of diagonal matrices with positive real entries, namely
$$
A=\{ \ \textup{diag}(\lambda,\mu,\nu) \ | \ \lambda,\mu,\nu>0, \ \lambda \mu \nu=1 \ \}  ,
$$
with associated Lie algebra given by the space of diagonal traceless matrices with real entries, that is
$$
\mathfrak{a}:= \{ \ \textup{diag}(x,y,z) \ | \ x+y+z=0 \ \}  .
$$

\begin{lem}\label{lem invariant SL3}
If $w_0$ represents the longest element in the Weyl group of $A$, the $w_0$-invariant cohomology of the dual algebra $\mathfrak{a}^\ast$ is given by
$$
\dim (\wedge^\ell \mathfrak{a}^\ast)^{w_0}=
\begin{cases*}
1, \ \ \ & \textup{for $\ell=1$,} \\
0, \ \ \ & \textup{for $\ell \geq 2$.}  
\end{cases*}
$$
\end{lem}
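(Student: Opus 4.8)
The plan is to reduce the statement to a direct eigenvalue computation for an involution on the two-dimensional space $\mathfrak{a}$ and on its exterior powers. First I would fix a representative $w_0$ of the longest element of the Weyl group. Since $A=\{\mathrm{diag}(\lambda,\mu,\nu)\}$ has Weyl group the symmetric group $S_3$ acting by permuting the diagonal entries, its longest element is the order-reversing permutation $(13)$, whose adjoint action on $\mathfrak{a}$ is
\[
\mathrm{Ad}(w_0)\colon \mathrm{diag}(x,y,z)\longmapsto \mathrm{diag}(z,y,x).
\]

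Next I would diagonalize this involution on the plane $\mathfrak{a}=\{x+y+z=0\}$. One checks immediately that $\mathrm{Ad}(w_0)$ fixes $\mathrm{diag}(1,-2,1)$ (eigenvalue $+1$) and negates $\mathrm{diag}(1,0,-1)$ (eigenvalue $-1$), so that $\mathrm{Ad}(w_0)$ is a reflection with both eigenspaces one-dimensional. Dualizing, the induced $w_0$-action on $\mathfrak{a}^\ast$ is the contragredient of an involution with real eigenvalues $\pm 1$, hence it again has eigenvalues $+1$ and $-1$, each of multiplicity one; fix dual eigenvectors $\xi_+$ and $\xi_-$ spanning these eigenspaces.

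Finally I would read off the invariants from the exterior algebra. For $\ell=1$ we have $(\mathfrak{a}^\ast)^{w_0}=\langle\xi_+\rangle$, so the dimension is $1$. For $\ell=2$ the one-dimensional space $\wedge^2\mathfrak{a}^\ast$ is spanned by $\xi_+\wedge\xi_-$, on which $w_0$ acts by $\det(\mathrm{Ad}(w_0)^\ast)=(+1)(-1)=-1$; hence there are no nonzero invariants. For $\ell\geq 3$ the space $\wedge^\ell\mathfrak{a}^\ast$ vanishes since $\dim\mathfrak{a}^\ast=2$. This yields exactly the stated dimensions.

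There is no serious obstacle here: the only point requiring care is the correct identification of $w_0$ together with the verification that its adjoint action on $\mathfrak{a}$ is the above reflection rather than $-\mathrm{id}$. The latter is what occurs for the hyperbolic factors treated earlier, but it fails in type $A_2$ precisely because $-\mathrm{id}$ does not belong to the Weyl group of $\mathrm{SL}(3)$; this is the source of the reflection structure and hence of the vanishing of the invariants in all degrees $\ell\neq 1$. Once the reflection is established, the exterior-power bookkeeping is entirely routine.
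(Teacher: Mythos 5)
Your proposal is correct and follows essentially the same route as the paper: both identify the adjoint action of $w_0$ on $\mathfrak{a}$ (equivalently $\mathfrak{a}^\ast$) as a reflection with one-dimensional $\pm 1$ eigenspaces, deduce $\dim(\mathfrak{a}^\ast)^{w_0}=1$, and kill $\wedge^2\mathfrak{a}^\ast$ because $w_0$ acts there by the determinant $-1$ (the paper even records your explicit eigenvector computation, with the basis $\{e_1^\ast-e_3^\ast,\,e_2^\ast\}$, in a remark following its proof). Your closing observation that the reflection structure stems from $-\mathrm{id}$ not lying in the Weyl group of type $A_2$ matches the paper's framing in its introduction.
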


\begin{proof}
Since $\mathfrak{a}$ has dimension equal to $2$, it is sufficient to check the cases when $\ell$ is equal to either $1$ or $2$. 

We start by noticing that the Weyl group $W$ of $G$ is isomorphic to
$$
W \cong \mathrm{Sym}(3) \cong D_6 \ .
$$
The longest element $w_0$ acts as a reflection on $\mathfrak{a}^\ast$. As a consequence it admits a one dimensional eigenspace $E(1)$ with eigenvalue $1$ and a one dimensional eigenspace $E(-1)$ with eigenvalue $-1$. Thus the dimension of the $w_0$-invariant subspace of $\mathfrak{a}^\ast$ is one and the statement holds for $\ell=1$.

When $\ell=2$, it is immediate to see that there is no invariant elements in $\wedge^2\mathfrak{a}^\ast$ because any non trivial vector must be equivariant with respect to the sign of $w_0$. Thus the $w_0$-invariant cohomology vanishes in degree $2$ and the statement follows. 
\end{proof}

\begin{rem}
The proof of the previous lemma can be made more concrete via the choice of a basis for the space $\mathfrak{a}^\ast$. We denote by $e_i^\ast$ the linear functional defined on $\mathfrak{a}$ whose value is the $i$-th diagonal component of the matrix. A basis for $\mathfrak{a}^\ast$ is given by $\mathcal{B}=\{e_1^\ast - e_3^\ast, e_2^\ast \}$. 

The adjoint action of $w_0$ (whose expression is explicitely given below) on the basis $\mathcal{B}$ is given by 
$$
\textup{Ad}(w_0)(e_2^\ast)=e_2^\ast  , \ \ \textup{Ad}(w_0)(e_1^\ast - e_3^\ast)=-(e_1^\ast-e_3^\ast)  .
$$
We immediately see that the only vector fixed by $w_0$ is $e_2^\ast$. As a consequence 
$$
(\mathfrak{a}^\ast)^{w_0} \cong \langle e_2^\ast \rangle  .
$$

In a similar way, when $\ell=2$, an invariant element must be the exterior product of two elements of $\mathfrak{a}^\ast$ that are either both invariant or both equivariant. As a consequence there is no invariant vector with respect to the action of $w_0$ in degree $2$.
\end{rem}

\subsection{The $\pi_A$-projection for $\mathrm{SL}(3,\mathbb{K})$}\label{SL3R3} As we did for products of isometries of real hyperbolic spaces, here we need to give an explicit characterization of the projection map $\pi_A$. As before, we choose $A$ to be the maximal abelian subgroup of diagonal matrices with positive real entries and $P<G$ is the subgroup of upper triangular matrices with entries in $\mathbb{K}$. Furthermore we fix $K=\mathrm{SO}(3,\R)$, if $\mathbb{K}=\mathbb{R}$, or $K=\mathrm{SU}(3)$, if $\mathbb{K}=\mathbb{C}$. The unipotent radical of $P$ is the subgroup $N$ of triangular matrices having ones on the diagonal. 

Again by the Iwasawa decomposition, we can write any element $g \in G$ in a unique way as a product $ank$, where $a \in A, n \in N, k \in K$. Thanks to such decomposition we are allowed to define
$$
\pi_A:G \rightarrow A, \ \ g=ank \mapsto a .
$$

\begin{lem}\label{lem iwasawa sl3}
For any $g \in G$ consider the matrix $s:=gg^*$, where either $g^*=g^t$ if $\mathbb{K}=\mathbb{R}$ or $g^*=\overline{g}^t$ if $\mathbb{K}=\mathbb{C}$. If $(s_{ij})_{i,j=1,2,3}$ are the entries of $s$, the $A$-projection of $g$ is given by
$$
\pi_A(g)=\mathrm{diag}(a_{11},a_{22},a_{33}) ,
$$
where
\begin{align*}
a_{33}&=\sqrt{s_{33}}  , \\
a_{22}&=\sqrt{\frac{s_{22}s_{33}-|s_{23}|^2}{s_{33}}} , \\
a_{11}&=\frac{1}{a_{22}a_{33}}.
\end{align*}
\end{lem}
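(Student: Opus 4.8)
The plan is to reduce the statement to a Cholesky-type factorization of the positive definite matrix $s=gg^*$. First I would write $g=ank$ according to the Iwasawa decomposition, with $a\in A$, $n\in N$ and $k\in K$. The key observation is that $a$ is diagonal with positive real entries, so $a^*=a$, while $K$ consists of orthogonal (resp.\ unitary) matrices when $\mathbb{K}=\mathbb{R}$ (resp.\ $\mathbb{K}=\mathbb{C}$), hence $kk^*=\mathrm{Id}$. Therefore
$$ s=gg^*=ank(ank)^*=an(kk^*)n^*a^*=(an)(an)^*. $$
Setting $b:=an$, the matrix $b$ is upper triangular, being the product of the diagonal matrix $a$ and the upper unitriangular matrix $n$, and its diagonal entries are precisely $a_{11},a_{22},a_{33}$.

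Second, I would exploit that $s=bb^*$ is a factorization of $s$ by an upper triangular matrix with positive diagonal, i.e.\ (the adjoint of) a Cholesky factorization; by uniqueness of such a factorization the diagonal of $b$ is determined by $s$ and can be read off by comparing entries starting from the bottom-right corner. Writing $b=(b_{ij})$ with $b_{ii}=a_{ii}$ and $b_{ij}=0$ for $i>j$, and using $s_{ij}=\sum_k b_{ik}\overline{b_{jk}}$, the relevant identities are $s_{33}=|b_{33}|^2=a_{33}^2$, then $s_{23}=b_{23}\overline{b_{33}}=b_{23}a_{33}$, and finally $s_{22}=|b_{22}|^2+|b_{23}|^2=a_{22}^2+|s_{23}|^2/s_{33}$. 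Solving these in turn yields
$$ a_{33}=\sqrt{s_{33}},\qquad a_{22}=\sqrt{\frac{s_{22}s_{33}-|s_{23}|^2}{s_{33}}}, $$
as claimed.

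Third, for the remaining entry $a_{11}$ I would simply invoke that $g\in\mathrm{SL}(3,\mathbb{K})$, so that $a\in A$ has determinant one: from $a_{11}a_{22}a_{33}=\det a=1$ we conclude $a_{11}=(a_{22}a_{33})^{-1}$.

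I do not expect any genuine obstacle here, as the argument is essentially a uniqueness statement for Cholesky decomposition combined with bookkeeping. The only points requiring care are tracking the complex conjugates in the case $\mathbb{K}=\mathbb{C}$, checking that the quantities under the square roots are positive (which follows from the positive definiteness of $s$, so that $a_{22}$ and $a_{33}$ are indeed well-defined positive reals), and keeping in mind which ordering $g=ank$ of the Iwasawa decomposition is in force when computing $gg^*$.
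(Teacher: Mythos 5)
Your proposal is correct and follows essentially the same route as the paper: both use the Iwasawa decomposition $g=ank$ together with orthogonality/unitarity of $k$ to obtain the Cholesky-type factorization $s=gg^*=(an)(an)^*$, and then read off the diagonal of $an$ from the entries of $s$. The only difference is that the paper dismisses the entrywise computation with ``one can verify algebraically,'' whereas you carry it out explicitly (bottom-right corner first) and also note the determinant-one condition giving $a_{11}=(a_{22}a_{33})^{-1}$, which is a welcome completion of the argument rather than a deviation from it.
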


\begin{proof}
Let $X$ be the Riemannian symmetric space associated to $G$. For the convenience of the reader we remind that $X$ is the set of positive definite either symmetric matrices, when $\mathbb{K}=\mathbb{R}$, or Hermitian matrices, when $\mathbb{K}=\mathbb{C}$, with determinant equal to one. 

The natural projection map is given by 
$$
p:G \rightarrow X \ , \ \ p(g)=gg^*=s .
$$
Let $g=ank$ be the Iwasawa decomposition of the element $g$. Recall that $n$ is an upper triangular unipotent matrix with entries in $\mathbb{K}$ and $a$ is diagonal with positive real entries. We have that 
\begin{equation}\label{eq cholesky}
s=gg^*=(ank)(ank)^*=ankk^*n^*a^*=(an)(n^*a^*)  ,
\end{equation}
where we exploited the fact that $k$ is either orthogonal or unitary. One can verify algebraically that Equation \eqref{eq cholesky} is equivalent to the stament of the lemma. This concludes the proof. 
\end{proof}

\subsection{Contracting homotopies and differentials for $\mathrm{SL}(3,\mathbb{K})$} \label{SL3R4}

We start by fixing as a representative of the longest element $w_0$ in the Weyl group the following matrix
$$
w_0=\left(
\begin{array}{ccr}
0 & 0 & -1 \\
0 & 1 & 0 \\
1 & 0 & 0 
\end{array}
\right).
$$

Recall that given a configuration $[F_0,F_1,F_2]$ with triple ratio $\tau$, it is always possible to write a representative of such class as in Equation \eqref{Standard triple flags}. We fix such a choice of representative and we call it \emph{standard normalization}. A direct computation shows that a unipotent matrix fixing $F_0$ (the canonical flag) and sending $F_1$ to $F_2$ is given by 
$$
n_\tau:=\left(
\begin{array}{ccc}
1 & \frac{\tau+1}{\tau} & 1 \\
0 & 1 & 1\\
0 & 0 & 1\\
\end{array}
\right).
$$
In a similar way, a matrix sending the pair $(F_0,F_1)$ to the pair $(F_1,F_2)$ is given by 
$$
g_\tau:=\left(
\begin{array}{ccc}
0 & 0 & 1 \\
0 & 1 & 1\\
1 & \tau+1 & 1\\
\end{array}
\right).
$$

\begin{rem} 
Fixing the standard normalization is equivalent to say that a canonical representative for a configuration $[F_0,F_1,F_2]$ with triple ratio $\tau$ is given by $(F_{can},w_0F_{can},n_\tau w_0 F_{can})$, where $F_{can}$ is the canonical flag and $w_0$ represents the longest element in the Weyl group. 
\end{rem}

As for the case of products, here contracting homotopies for the cocomplexes $(C^{*,q}_K,d^\uparrow)$, for $q\geq 3$, can easily be defined as follows: 
$$\begin{array}{rccl}
h^{p,q}:&C^{p,q}_K&\longrightarrow & C^{p-1,q}_K\\
%& f&\longmapsto &\{ (g_0,\dots,g_{p-1})\mapsto f(e,g_0,\dots,g_{p-1})\}.
\end{array}
$$ 
with 
\begin{align*}
h^{p,q}(f)(g_0,\dots, g_{p-1})(&F_{can}, w_0F_{can}, n_\tau w_0 F_{can}, F_4,\dots, F_q):= \\
             f(e,g_0,\dots, g_{p-1})(&F_{can}, w_0F_{can}, n_\tau w_0 F_{can}, F_4,\dots, F_q).
\end{align*}

The previous definition gives back a measurable function on a dense subset of $G^p$: Indeed, although $f$ is only defined on a subset of full measure of $G^{p+1}$, the evaluation on $(e,g_0,\dots,g_{p-1})$ makes sense on a subset of full measure of $G^p$ by the $G$-invariance of $f$. Likewise, the evaluation on $(F_{can}, w_0F_{can}, n_\tau w_0 F_{can}, F_4,\dots, F_q)$ is allowed for almost all $(F_4,\dots,F_q)$ by transitivity and by our choice of normalization. The resulting cochain is clearly $G$-invariant by construction and it holds that
$$
h^{p+1,q} \circ d^\uparrow + d^\uparrow \circ h^{p,q}=\id  .
$$

Before computing explicitly the differential $d^\rightarrow:C^{p,2} \rightarrow C^{p,3}$, we want to point out that the lack a transitivity of the $\mathrm{SL}(3,\mathbb{K})$-action on triples of flags has important consequences on the realization of the space $C^{p,3}$. In fact, since the action is no more transitive on triples (as it were for products of isometries of real hyperbolic spaces), the space of orbits does not boil down to a point, but it is actually not trivial. In fact, we know that it is parametrized by the triple ratio. As a consequence 
$$
C^{p,3}=L^0(G^{p+1},L^0((G/P)^3))^G \cong L^0(G^{p+1} \times \mathbb{K}^{\ast\ast}) ,
$$
where the $\mathbb{K}^{\ast\ast}$-factor, for $\mathbb{K}^{\ast\ast}=\mathbb{K}^\ast \setminus \{-1\}$, is the contribution coming from the triple ratio. 

\begin{lem}\label{lem differential sl3}
The differential 
$$
d^\rightarrow: L^0(G^{p+1})^A \cong C^{p,2} \rightarrow L^0(G^{p+1} \times \mathbb{K}^{\ast\ast} ) \cong C^{p,3}  
$$
is given by 
$$
d^\rightarrow (\beta)(g_0,\ldots,g_p)(\tau)=(-1)^{p+1}[\beta(g_\tau^{-1}g_0,\ldots,g_\tau^{-1}g_p) -\beta(n_\tau^{-1}g_0,\ldots,n_\tau^{-1}g_p)+\beta(g_0,\ldots,g_p)].
$$
\end{lem}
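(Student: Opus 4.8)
The plan is to follow verbatim the strategy of the proof of Lemma \ref{d 2 to 3}, replacing the $3$-transitivity used there by the transitivity of $\mathrm{SL}(3,\mathbb{K})$ on configurations consisting of two generic flags together with one generic point; this is precisely what lets us parametrize the orbit space $G\backslash(G/P)^3$ by the triple ratio $\tau\in\mathbb{K}^{\ast\ast}$. First I would recall the cochain-level identification $L^0(G^{p+1})^A\cong C^{p,2}$, under which $\beta$ corresponds to $\overline{\overline{\beta}}$ determined by
$$\overline{\overline{\beta}}(g_0,\dots,g_p)(hP,hw_0P)=\beta(h^{-1}g_0,\dots,h^{-1}g_p),$$
together with the identification $C^{p,3}\cong L^0(G^{p+1}\times\mathbb{K}^{\ast\ast})$, in which the $\mathbb{K}^{\ast\ast}$-coordinate records the triple ratio of the three flags. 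The key point is that the standard normalization furnishes a measurable choice of representative $(F_{can},w_0F_{can},n_\tau w_0F_{can})$ for the orbit with triple ratio $\tau$, so that, by $G$-invariance of the cochain $d^\rightarrow\overline{\overline{\beta}}$, the value of $d^\rightarrow(\beta)$ at $(g_0,\dots,g_p)$ and $\tau$ is obtained as
$$d^\rightarrow(\beta)(g_0,\dots,g_p)(\tau)=d^\rightarrow\overline{\overline{\beta}}(g_0,\dots,g_p)(F_{can},w_0F_{can},n_\tau w_0F_{can}).$$

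Next I would expand the right-hand side using that $d^\rightarrow$ is $(-1)^{p+1}$ times the homogeneous differential on $(G/P)^3$, which produces three terms, carrying signs $+,-,+$, obtained by deleting in turn the first, second and third flag. Suppressing the common argument $(g_0,\dots,g_p)$, this yields
$$(-1)^{p+1}\bigl[\overline{\overline{\beta}}(w_0F_{can},n_\tau w_0F_{can})-\overline{\overline{\beta}}(F_{can},n_\tau w_0F_{can})+\overline{\overline{\beta}}(F_{can},w_0F_{can})\bigr].$$
It then remains to rewrite each of the three pairs of flags in the normal form $(hF_{can},hw_0F_{can})$ and to apply the defining formula for $\overline{\overline{\beta}}$.

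The three required group elements are exactly the ones singled out before the statement. For the pair $(F_{can},w_0F_{can})$ one takes $h=e$, giving $\beta(g_0,\dots,g_p)$. For $(F_{can},n_\tau w_0F_{can})$ one takes $h=n_\tau$: by construction $n_\tau$ fixes the canonical flag $F_{can}$ and carries $w_0F_{can}$ to $n_\tau w_0F_{can}$, so $n_\tau(F_{can},w_0F_{can})=(F_{can},n_\tau w_0F_{can})$ and the term equals $\beta(n_\tau^{-1}g_0,\dots,n_\tau^{-1}g_p)$. For $(w_0F_{can},n_\tau w_0F_{can})$ one takes $h=g_\tau$, which by the remark preceding the lemma sends the pair $(F_{can},w_0F_{can})$ to $(w_0F_{can},n_\tau w_0F_{can})$, giving $\beta(g_\tau^{-1}g_0,\dots,g_\tau^{-1}g_p)$. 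Substituting these three expressions and keeping the signs $+,-,+$ produces exactly the claimed formula.

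The computations are routine; the only genuine points requiring care are, first, that the evaluation of the $G$-invariant cochain $d^\rightarrow\overline{\overline{\beta}}$ on the standard representative is well defined almost everywhere — this is where the measurable normalization $\tau\mapsto n_\tau$ and the near-transitivity of the action enter, exactly as in the construction of the homotopies $h^{p,q}$ above — and second, the bookkeeping verifying that $n_\tau$ and $g_\tau$ genuinely realize the two pair-moves claimed, which is immediate from the explicit matrices and the description of the standard normalization but is the place where a stray inverse or sign could slip in. I expect no substantive obstacle beyond this bookkeeping.
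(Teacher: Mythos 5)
Your proposal is correct and follows essentially the same route as the paper's proof: both evaluate $d^\rightarrow\overline{\overline{\beta}}$ on the standard representative $(F_{can},w_0F_{can},n_\tau w_0F_{can})$ via $G$-invariance, expand the homogeneous differential into the three terms with signs $+,-,+$, and then use $h=g_\tau$, $h=n_\tau$, $h=e$ respectively to put each pair into the normal form $(hF_{can},hw_0F_{can})$ before applying the induction isomorphism. The only difference is the (equivalent) convention you use for that isomorphism, $\overline{\overline{\beta}}(g_0,\dots,g_p)(hP,hw_0P)=\beta(h^{-1}g_0,\dots,h^{-1}g_p)$ versus the paper's $\overline{\beta}(g_0,\ldots,g_p)(h^{-1}F_{can},h^{-1}w_0F_{can})=\beta(hg_0,\ldots,hg_p)$, which changes nothing in substance.
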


\begin{proof}
The proof is analogous to the one of Lemma \ref{d 2 to 3}. Any cochain $\beta \in L^0(G^{p+1})^A$ determines a cochain $\overline{\beta} \in C^{p,2}=L^0(G^{p+1},L((G/P)^2))^G$ by setting 
$$
\overline{\beta}(g_0,\ldots,g_p)(h^{-1}F_{can},h^{-1}w_0F_{can}):=\beta(hg_0,\ldots,h_gp) .
$$
Fixing our choice of the standard normalization, we can see that the differential $d^\rightarrow (\beta) \in L^0(G^{p+1} \times \mathbb{K}^{\ast\ast})$ is given by 
$$
d^\rightarrow(\beta)(g_0,\ldots,g_p)(\tau)=d^\rightarrow \overline{\beta}(g_0,\ldots,g_p)(F_{can},w_0F_{can},n_\tau w_0 F_{can})  ,
$$
where the latter evaluation makes sense because of the $G$-invariance of $d^\rightarrow \overline{\beta}$.

By the definition of $d^\rightarrow$ we have that 
\begin{align*}
d^\rightarrow \overline{\beta}(g_0,\ldots,g_p)(F_{can},w_0F_{can},n_\tau w_0 F_{can})=&(-1)^{p+1}[\overline{\beta}(g_0,\ldots,g_p)(w_0 F_{can}, n_\tau w_0 F_{can}) \\
 &-\overline{\beta}(g_0,\ldots,g_p)(F_{can},n_\tau w_0 F_{can})\\
 &+\overline{\beta}(g_0,\ldots,g_p)(F_{can},w_0 F_{can})] . 
\end{align*} 

By the way we defined $g_\tau$ we know that
$$
(w_0 F_{can}, n_\tau w_0 F_{can})=(g_\tau F_{can}, g_\tau w_0 F_{can}) ,
$$
and thanks to the fact that $n_\tau F_{can}=F_{can}$, we can rewrite 
\begin{align*}
d^\rightarrow \overline{\beta}(g_0,\ldots,g_p)(F_{can},w_0F_{can},n_\tau w_0 F_{can})=&(-1)^{p+1}[\overline{\beta}(g_0,\ldots,g_p)(g_\tau F_{can}, g_\tau w_0 F_{can}) \\
 &-\overline{\beta}(g_0,\ldots,g_p)(n_\tau F_{can},n_\tau w_0 F_{can})\\
 &+\overline{\beta}(g_0,\ldots,g_p)(F_{can},w_0 F_{can})]  \\
=&(-1)^{p+1}[\beta(g_\tau^{-1}g_0,\ldots,g_\tau^{-1}g_p)-\beta(n_\tau^{-1}g_0,\ldots,n_\tau^{-1}g_p)\\
&+\beta(g_0,\ldots,g_p)],
\end{align*} 
and the statement is proved. 
\end{proof}

\subsection{Proof of Theorem \ref{Thm 1 to 2 for sl3}}\label{sec proof 1 2 sl3}\label{SL3R5}
Let $\alpha_\mathfrak{a} \in \mathfrak{a}^\ast$ be a $w_0$-invariant linear functional. We consider $\widetilde{\alpha}:A \rightarrow \R$ the inhomogeneous cocycle defined by
$$
\widetilde{\alpha}(a)=\alpha_{\mathfrak{a}}(\log a) ,
$$
and let $\alpha:A^2 \rightarrow \R$ be its homogenized variant
$$
\alpha(a_0,a_1):=\widetilde{\alpha}(a_0^{-1}a_1)  .
$$ 
We can consider the $G$-invariant extension of $\alpha$ to $G$ by precomposing with the projection $\pi_A$ of Lemma \ref{lem iwasawa sl3}, that is
$$
\alpha_G:G^2 \rightarrow \R \ , \ \alpha(g_0,g_1)=\alpha_G(\pi_A(g_0),\pi_A(g_1)) .
$$
It is clear that $\alpha_G$ is $P$-invariant, thus it lies in $C^{1,1}$, and its cohomology class in $H^2(C^{1,1},d^\uparrow) \cong H^1_m(P) \cong H^1_m(A) \cong \mathfrak{a}^\ast$ corresponds to $\alpha_{\mathfrak{a}}$. 

By Proposition \ref{Section} we have to follow $\overline{\alpha_G}$ through the maps

\begin{equation}\label{eq:diagram:chasing:sl3}
\xymatrix{
\overline{\alpha_G} \in C^{1,1} \ar[r] & d^\rightarrow \overline{\alpha_G}=d^\uparrow \beta \in C^{1,2} & \\
& \mathcal{H}^1(\overline{\alpha_G})=:\beta \in C^{0,2} \ar[u] \ar[r] & d^\rightarrow \beta=:\omega\in C^{0,3}. \\}
\end{equation}

\subsubsection*{Computation of $\beta$} By the definition of $\mathcal{H}^1$, we have
$$
\beta(g)=\mathcal{H}^1(\overline{\alpha_G})(g)=\alpha_G(w_0^{-1}\pi_A(g),w_0^{-1}g) .
$$

\subsubsection*{Computation of $\omega$} In this case the computation has only two steps. Thus $\omega(e)$ will give us back the desired cocycle. In virtue of Equation \eqref{Standard triple flags} we can fix $F_{can},w_0F_{can},n_\tau w_0F_{can}$ as triple of flags with triple ratio $\tau$. 
On such configuration we will evaluate $\omega(e)$. By Lemma \ref{lem differential sl3} we have that
\begin{align*}
\omega(e)(F_{can},w_0F_{can},n_\tau w_0 F_{can})&=d^\rightarrow \beta(e)(F_{can},w_0F_{can},n_\tau w_0 F_{can})  \\
									 &=-\beta(g_\tau^{-1})+\beta(n_\tau^{-1})-\beta(e) \\
					             	                &=-\alpha_G(w_0^{-1}\pi_A(g_\tau^{-1}),w_0^{-1}g_\tau^{-1})\\
								        &+\alpha_G(w_0^{-1}\pi_A(n_\tau^{-1}),w_0^{-1}n_\tau^{-1})  ,
\end{align*}
where we exploited the fact that $\beta(e)$ vanishes. 

By Lemma \ref{lem invariant SL3} we know that $(\mathfrak{a}^\ast)^{w_0}$ is one dimensional and generated by the functional $e_2^\ast$. This implies that we can write $\alpha_{\mathfrak{a}}$ as a multiple of $e_2^\ast$, say $\lambda e^\ast_2$. In an analogous way, the cocycle $\widetilde{\alpha}$ is a multiple of the logarithm of the second diagonal coordinate. 

Using Lemma \ref{lem iwasawa sl3} one can verify that the second diagonal coordinate of the $A$-projections are given by
\begin{align*}
(\pi_A(w_0^{-1}\pi_A(g_\tau^{-1})))_{22}&=1  , \ \ (\pi_A(w_0^{-1}g_\tau^{-1}))_{22}=\frac{\sqrt{3}}{\sqrt{2(|\tau|^2+\mathrm{Re}(\tau)+1)}}, \\
(\pi_A(w_0^{-1}\pi_A(n_\tau^{-1})))_{22}&=1   , \ \ (\pi_A(w_0^{-1}n_\tau^{-1}))_{22}=\frac{\sqrt{3}|\tau|}{\sqrt{2(|\tau|^2+\mathrm{Re}(\tau)+1)}}.
\end{align*}
As a consequence we get that
$$
\alpha_G(w_0^{-1}\pi_A(g_\tau^{-1}),w_0^{-1}g_\tau^{-1})=\lambda \log\left( \frac{\sqrt{3}}{\sqrt{2(|\tau|^2+\mathrm{Re}(\tau)+1)}} \right) , 
$$
and similarly for the other term we have
$$
\alpha_G(w_0^{-1}\pi_A(n_\tau^{-1}),w_0^{-1}n_\tau^{-1})=\lambda \log \left( \frac{\sqrt{3}|\tau|}{\sqrt{2(|\tau|^2+\mathrm{Re}(\tau)+1)}} \right) ,
$$

Summing everything up we obtain
$$
\omega(e)(F_{can},w_0F_{can},n_\tau w_0 F_{can})=\lambda \log |\tau|  ,
$$
and this concludes the proof. 

\subsection{Proof of Theorem \ref{Thm 1 to 3 for sl3}}\label{sec proof thm 1 3 sl3}\label{SL3R6}
We will use the same notation we used at the beginning of the previous section. Let $\alpha_\mathfrak{a}$ be a $w_0$-invariant linear functional defined on $\mathfrak{a}$, let $\widetilde{\alpha}$ be the associated inhomogeneous cocycle on $A$ and let $\alpha$ be its homogenization. By precomposing with the projection $\pi_A$ we obtain the cocycle $\alpha_G:G^2 \rightarrow \R$ defined on $G$. 

This time we view $\alpha_G$ as an $A$-invariant cocycle lying in $C^{1,2}$ and its cohomology class represents the element $\alpha_{\mathfrak{a}}$ in $H^2(C^{1,2},d^\uparrow) \cong H^1_m(A) \cong \mathfrak{a}^\ast$. 

By Lemma \ref{Injection} we have to follow $\overline{\overline{\alpha_G}}$ along the path 

\begin{equation}\label{eq:diagram:chasing:sl3:deg3}
\xymatrix{
\overline{\overline{\alpha_G}} \in C^{1,2} \ar[r] & d^\rightarrow \overline{\overline{\alpha_G}}=d^\uparrow \beta \in C^{1,3} & \\
& h^{0,3}(d^\rightarrow \overline{\overline{\alpha_G}})=:\beta \in C^{0,3} \ar[u] \ar[r] & d^\rightarrow \beta=:\omega\in C^{0,4}. \\}
\end{equation}

\subsubsection*{Computation of $\beta$} We start by computing the differential $d^\rightarrow \alpha_G$. By Lemma \ref{lem differential sl3} we have that 
\begin{align*}
d^\rightarrow \alpha_G(g_0,g_1)(\tau)&=\alpha_G(g_\tau^{-1}g_0,g_\tau^{-1}g_1)-\alpha_G(n_\tau^{-1}g_0,n_\tau^{-1}g_1)+\alpha_G(g_0,g_1)\\
&=\alpha_G(g_\tau^{-1}g_0,g_\tau^{-1}g_1) , 
\end{align*}
where we moved from the first line to the second one exploiting the $P$-invariance (and thus the $N$-invariance) of $\alpha_G$. 

To construct $\beta$ it is sufficient to apply to $d^\rightarrow \alpha$ the contracting homotopy $h^{0,3}$, that means
$$
\beta(g)(\tau)=d^\rightarrow \alpha_G(e,g)(\tau)=\alpha_G(g_\tau^{-1},g_\tau^{-1}g)  .
$$

\subsubsection*{Computation of $\omega$} %\marginpar{Since it is already mentionned in the introduction, I moved the comment about SageMath to where it is actually used.} 

Also in this case the computation has only two steps and $\omega(e)$ will give back the desired cocycle. Let $F_0,F_1,F_2,F_3$ be a $4$-tuple of flags in general position. We assume that their configuration class has coordinates 
$$
(z_{01},z_{10},z_{23},z_{32})=(a,b,c,d) .
$$ 
We denote by $\tau_{ijk}$ the triple ratio of the configuration $[F_i,F_j,F_k]$, where $i,j,k \in \{0,1,2,3\}$. Again by the transitivity of $G$, we can suppose that our flags are exactly the ones given by Equation \eqref{Standard four flags}. 

By the definition of the differential $d^\rightarrow$ we have that 
\begin{align*}
\omega(e)(F_0,F_1,F_2,F_3)&=d^\rightarrow(\beta)(e)(F_0,F_1,F_2,F_3) \\
&=-\beta(e)(F_1,F_2,F_3)+\beta(e)(F_0,F_2,F_3) \\
&-\beta(e)(F_0,F_1,F_3)+\beta(e)(F_0,F_1,F_2) .
\end{align*}
It is easy to see that the last term vanishes. For the other three summands we need to exploit $G$-invariance to compute the evaluation, since we know only how to evaluate $\beta(g)$, for some $g$, on a standard triple $(F_{can},w_0F_{can},n_\tau w_0 F_{can})$.  

As a result, we need to find  the matrices $g_{ijk} \in G$ satisfying
\begin{equation}\label{gijk}
g_{ijk}(F_{can},w_0F_{can},n_{\tau_{ijk}}w_0F_{can})=(F_i,F_j,F_k) ,
\end{equation}
where $(i,j,k)$ is either $(1,2,3),(0,2,3)$ or $(0,1,3)$. If we set $$r=a(1-b)  , \ s=1-ab-(\tau_{012}+1)r,$$ and $$u=(a-1)/\tau_{012} , \ v=ab-1-(1+\tau_{012})u,$$ 
we can define
$$
g'_{123}:=\left(
\begin{array}{ccc}
0 & 0 & ab\\
0 & r & ab \\
s & (1+\tau_{012})r & ab
\end{array} 
\right), \ \
g'_{023}:=\left(
\begin{array}{ccc}
v & (\tau_{012}+1)u  & 1\\
0 & \tau_{012}u & 1 \\
0 & 0 & 1
\end{array} 
\right), \ \ 
g'_{013}:=\left(
\begin{array}{ccc}
ab & 0 & 0\\
0 & a & 0\\
0 & 0 & 1
\end{array}
\right).
$$
Then the matrices  
$$
g_{ijk}=\frac{1}{\sqrt[3]{\det(g'_{ijk})}}g'_{ijk} 
$$
satisfy (\ref{gijk}).

Exploiting the matrices $g_{ijk}$ and the $G$-invariance, we can rewrite
\begin{align*}
\omega(e)(F_0,F_1,F_2,F_3)&=-\beta(g_{123}^{-1})(F_{can},w_0F_{can},n_{\tau_{123}}w_0F_{can})\\
&+\beta(g_{023}^{-1})(F_{can},w_0F_{can},n_{\tau_{023}}w_0F_{can}) \\
&-\beta(g_{013}^{-1})(F_{can},w_0F_{can},n_{\tau_{013}}w_0F_{can}) .
\end{align*}
The explicit computation of $\beta(g)$ we made in the previous subsection allows us to write the following equation 
\begin{align}\label{formula}
\omega(e)(F_0,F_1,F_2,F_3)&=-\alpha_G(g_{\tau_{123}}^{-1},g_{\tau_{123}}^{-1}g_{123}^{-1})+
\alpha_G(g_{\tau_{023}}^{-1},g_{\tau_{023}}^{-1}g_{023}^{-1})-\alpha_G(g_{\tau_{013}}^{-1},g_{\tau_{013}}^{-1}g_{013}^{-1})
\end{align}

Again by Lemma \ref{lem invariant SL3} the functional $\alpha_{\mathfrak{a}} \in \mathfrak{a}^\ast$ must be a multiple of $e_2^\ast$, say $\lambda e^\ast_2$. Analogously the cocycle $\widetilde{\alpha}$ is a multiple of the logarithm of the second diagonal coordinate. 

%\marginpar{Rewrote this, please check.} 
To evaluate $\alpha_G$, we thus need to compute the second coordinate of the $\pi_A$-projection of each of the six coordinates in the left hand side of (\ref{formula}). In view of Lemma \ref{lem iwasawa sl3} and the fact that these six coordinates are explicit it would be possible to compute those by hand. We however chose to exploit the  open source software \texttt{SageMath}, with which we obtained 

%This time the computation of the cocycle will be more involving. In fact, the projections we will need are much more complicated to compute than the ones that appeared in the proof of Theorem \ref{Thm 1 to 2 for sl3}. To overcome this difficulty we exploited the open source software \texttt{SageMath}. Here we will describe all the theoretical technicalities and we invite the reader to verify the correctedness of the computation using any software that supports symbolic computation.

\begin{align*}
&\alpha_G(g_{\tau_{123}}^{-1},g_{\tau_{123}}^{-1}g_{123}^{-1})={\scriptstyle
\frac{\lambda}{6} \log 
\frac{
27|a|^4|a-1|^2|b|^2|b-1|^2|c-1|^2
}
{
\left| 8(|a|^2|b|^2+|a|^2\mathrm{Re}(c\overline{b})+|a|^2|c|^2-2|a|^2\mathrm{Re}(b)-|a|^2\mathrm{Re}(c)-\mathrm{Re}(ab\overline{c})-2|c|^2\mathrm{Re}(a)+|a|^2+\mathrm{Re}(c\overline{a})+|c|^2)^3 \right|
},
}
\\
&\alpha_G(g_{\tau_{023}}^{-1},g_{\tau_{023}}^{-1}g_{023}^{-1})={\scriptstyle
\frac{\lambda}{6} \log 
\frac{
27|a|^2|a-1|^2|b-1|^2|c|^4|c-1|^2
}
{
\left| 8(|a|^2|b|^2+|a|^2\mathrm{Re}(c\overline{b})+|a|^2|c|^2-2|a|^2\mathrm{Re}(b)-|a|^2\mathrm{Re}(c)-\mathrm{Re}(ab\overline{c})-2|c|^2\mathrm{Re}(a)+|a|^2+\mathrm{Re}(c\overline{a})+|c|^2)^3 
\right|
},
}
\\
&\alpha_G(g_{\tau_{013}}^{-1},g_{\tau_{013}}^{-1}g_{013}^{-1})=\frac{\lambda}{6} \log \left|
\frac{b}{a}
\right|^2.
\end{align*}

Putting everything together we obtain that  
$$
\omega(e)(F_0,F_1,F_2,F_3)=-\frac{2\lambda}{3} \log \left| \frac{b}{c} \right| ,
$$
and the statement is proved.

\subsection{A different proof of the injectivity of the comparison map (Theorem \ref{Injectivity comparison sl3}) for $\mathrm{SL}(3,\mathbb{K})$, when $\mathbb{K}=\mathbb{R},\mathbb{C}$}\label{SL3R7}

\begin{lem} \label{conj 7 for SL3} Let $\mathbb{K}$ be either the real or the complex field. Conjecture \ref{Conj injectivity boundary} is true for $G=\mathrm{SL}(3,\mathbb{K})$ in degree $3$, i.e. the comparison map 
$$c_G:H^3_{m,b}(G\curvearrowright G/P)\longrightarrow H^3_m(G\curvearrowright G/P)$$
is injective.
\end{lem}

\begin{proof} Take $f\in L^{\infty}((G/P)^4)$ a $G$-invariant cocycle representing a cohomology class in $H^3_{m,b}(G\curvearrowright G/P)$ lying in the kernel of the comparison map. We must have $f=\delta h$ for some not necessarily bounded $G$-invariant function $h\in L^0((G/P)^3)^G$. 

The $G$-invariance of both $f$ and $h$ implies that they both descend to two functions $\overline{f}$ and $\overline{h}$, respectively, on the configuration space. Additionally we have an equality $\overline{f}=\delta \overline{h}$ for almost every configuration class. More precisely, let $F_0,F_1,F_2,F_3$ be $4$ flags in general position. Every configuration $[F_i,F_j,F_k]$ has an associated triple ratio defined by
$$
x:=\tau(F_1,F_2,F_3)  , \ y:=\tau(F_0,F_2,F_3)  , \ z:=\tau(F_0,F_1,F_3)  , \ w:=\tau(F_0,F_1,F_2) .
$$

The equation $f=\delta h$ is equivalent to saying that $\overline{f}$ depends only on the triple ratios in the following way
\begin{equation}\label{eq cocycle triple ratio}
\overline{f}(x,y,z,w)=\overline{h}(x)-\overline{h}(y)+\overline{h}(z)-\overline{h}(w) ,
\end{equation}
for almost every $x,y,z,w \in \mathbb{K}^\ast\setminus \{-1\}$ satisfying the multiplicative cocycle condition $xz=yw$ (which follows by Equation \eqref{eq triple in terms cross}). By introducing the variable $a=z/y$, we can rewrite
$$
\overline{f}(x,y,z,w)=\overline{h}(x)-\overline{h}(y)+\overline{h}(z)-\overline{h}(ax)  .
$$
If we interchange the roles of $y$ and $z$, respectively, we obtain 
$$
\overline{f}(x,z,y,w)=\overline{h}(x)-\overline{h}(z)+\overline{h}(y)-\overline{h}(a^{-1}x)  , 
$$
and the essential boundedness of $\overline{f}$ implies that there exists some $C>0$ such that
\begin{equation}\label{eq quasi morphism}
\left| 2\overline{h}(x)-\overline{h}(ax)-\overline{h}(a^{-1}x) \right| < C  ,
\end{equation}
for almost every $a,x \in \R^\ast$. 

Notice that Equation \eqref{eq quasi morphism} can be rewritten as
\begin{equation}\label{eq quasi morphism 2}
\left | 2\overline{h}(\sqrt{uv})-\overline{h}(u)-\overline{h}(v) \right| < C,
\end{equation}
where to move from Equation \eqref{eq quasi morphism} to Equation \eqref{eq quasi morphism 2} we used the change of variables $u=ax$ and $v=a^{-1}x$.

Our goal is to prove that $\overline{h}$ is at a bounded distance from a homomorphism. To show that, we will fix $\mathbb{K}=\mathbb{C}$, since the real case is simpler than the complex one. Unfortunately the function $\overline{h}$ is only measurable, thus we cannot evaluate it on specific values. To overcome such difficulty, we fix $\varepsilon>0$ and define the function
$$
H_\varepsilon: \mathbb{C}^\ast \rightarrow \R  , \ \ H_\varepsilon(x):=\frac{1}{4\pi\varepsilon}\int_{-\varepsilon}^\varepsilon \int_{-\pi}^{\pi} \overline{h}(2^te^{i \theta} x)dtd\theta. 
$$

The function $H_\varepsilon$ is obtained starting from $\overline{h}$ and averaging it on a small annulus centered in the origin and containing $x \in \mathbb{C}^\ast$ (see Figure \ref{figure annulus}). 

\begin{figure}[!h]
	\centering
		\includegraphics[width=7cm]{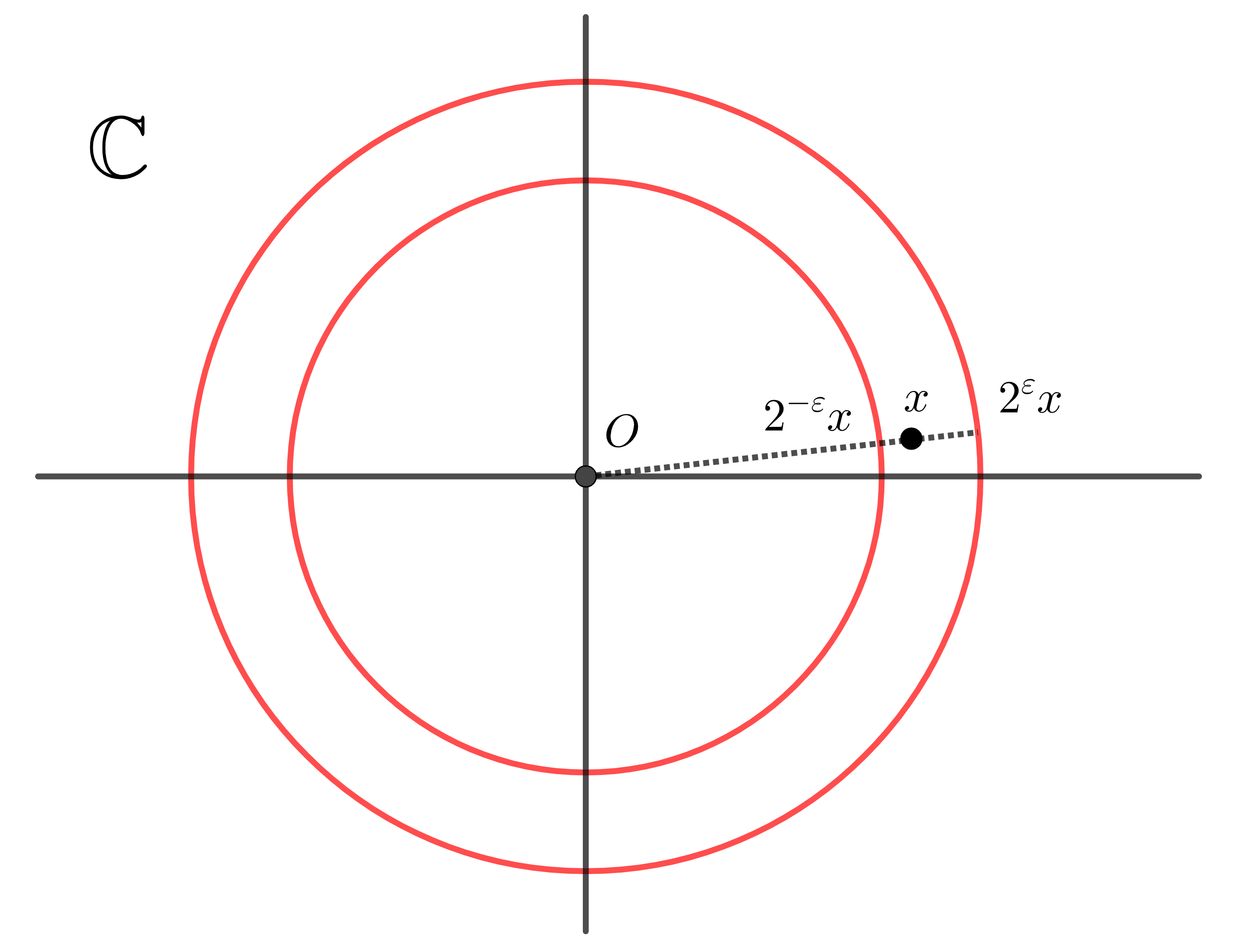}
	\caption{Annulus centered in the origin and containing $x \in \mathbb{C}^\ast$}\label{figure annulus}
\end{figure}

To prove that the integral defining $H_\varepsilon$ converges so that $H_\varepsilon$ is actually well-defined, we are going to prove that 
$$
|H_\varepsilon(x) - \overline{h}(x) |  \  
$$
is bounded for almost every $x \in \mathbb{C}^\ast$. To avoid a heavy notation, we are going to write $\sim$ to say that two quantities are at a bounded distance.

It holds that 
\begin{align*}
H_\varepsilon(x)&=\frac{1}{4\pi\varepsilon}\int_{-\varepsilon }^\varepsilon \int_{-\pi}^{\pi} \overline{h}(2^te^{i\theta}x)dtd\theta\\
&=\frac{1}{4 \pi \varepsilon}\int_{-\varepsilon}^{\varepsilon} \int_0^{\pi} (\overline{h}(2^te^{i\theta}x)+\overline{h}(2^te^{-i\theta}x))dtd\theta \\
&\sim \frac{1}{4\pi\varepsilon} \int_{-\varepsilon}^{\varepsilon}\int_{0}^{\pi} 2\overline{h}(2^tx)dtd\theta=\frac{1}{2\varepsilon}\int_{-\varepsilon}^{\varepsilon} \overline{h}(2^tx)dt\\
&=\frac{1}{2\varepsilon}\int_0^{\varepsilon} (\overline{h}(2^tx)+\overline{h}(2^{-t}x))dt\\
&\sim \frac{1}{2\varepsilon} \int_0^{\varepsilon} 2\overline{h}(x) dt=\overline{h}(x).
\end{align*}
We moved from the second line to the third one and from the fourth line to the fifth one using twice Equation \eqref{eq quasi morphism}. As a result, $H_\varepsilon$ is well-defined, continuous and for almost every $x \in \mathbb{C}^\ast$ is at bounded distance from $\overline{h}$. Since the same proof works also for $\mathbb{K}=\mathbb{R}$ (without actually integrating along circles), we will move back to consider the case of a general $\mathbb{K}$. 

For what we have shown so far, the function $H_\varepsilon$ must satisfy Equation \eqref{eq quasi morphism}, that is
\begin{equation}\label{eq new quasi morphism}
\left| 2H_\varepsilon(x)-H_\varepsilon(ax)-H_\varepsilon(a^{-1}x) \right|<C',
\end{equation}
for some $C'$ positive. Equivalently we must have that Equation \eqref{eq quasi morphism 2} holds, namely
\begin{equation}\label{eq new quasi morphism 2}
\left| 2H_\varepsilon(\sqrt{uv})-H_\varepsilon(u)-H_\varepsilon(v) \right|<C' .
\end{equation}

The continuity of $H_\varepsilon$ allows us to evaluate $H_{\varepsilon}$ on $x=a$, and Equation \eqref{eq new quasi morphism} implies that the quantity 
\begin{equation}\label{eq quasi morphism square}
|2H_\varepsilon(x)-H_\varepsilon(x^2)|  
\end{equation}
is uniformly bounded with respect to $x \in \mathbb{K}^\ast$. As a consequence, for almost every $u,v \in \mathbb{K}^\ast$, we must have
$$
H_\varepsilon(u)+H_\varepsilon(v) \sim 2 H_\varepsilon(\sqrt{uv}) \sim H_\varepsilon(uv) \ , 
$$
where the left-hand side is due to Equation \eqref{eq new quasi morphism 2} and the right-hand side is due to Equation \eqref{eq quasi morphism square}. As a consequence, $H_\varepsilon$ is a continuous quasi-morphism on $\mathbb{K}^\ast$. By the characterization of quasi-morphisms on $\mathbb{K}^\ast$, there must exist some $\lambda \in \mathbb{R}^\ast$ such that $H_\varepsilon$ is at bounded distance from the function $\lambda \log|x|$. The fact that $H_\varepsilon$ is at bounded distance from $\overline{h}$ implies that 
$$
\overline{\varphi}:\mathbb{K}^\ast \rightarrow \R \ , \ \ \ \overline{\varphi}(x):=\overline{h}(x)-\lambda \log|x| 
$$
is an essentially bounded function. Additionally it holds
$$
\overline{f}=\delta \overline{h}=\delta \overline{\varphi} .
$$
This proves that the class determined by $\overline{f}$ was already trivial in $H^3_{m,b}(G \curvearrowright G/P)$, and the statement is proved. 
\end{proof}

\begin{prop}\label{prop intersection trivial sl3}\label{Prop log b/c unbounded}
The function 
$$
\alpha:(F_0,F_1,F_2,F_3) \mapsto \log \left|  \frac{z_{10}}{z_{23}} \right|
$$
defines a non-trivial cocycle and it has no bounded representative in its cohomology class. 
\end{prop}

\begin{proof}
The non-triviality follows by the proof of Theorem \ref{Thm 1 to 3 for sl3}, since it is a generator of $NH^3_m(G \curvearrowright G/P)$. 

We have to prove that there is no function $f \in L^\infty((G/P)^4)$ and no function $h \in L^0((G/P)^3)$ such that
\begin{equation}\label{eq boundedness log}
\alpha(F_0,F_1,F_2,F_3)=f(F_0,F_1,F_2,F_3)+\delta h(F_0,F_1,F_2,F_3)  ,
\end{equation}
for almost every $F_0,\ldots,F_3 \in G/P$. We suppose that the configuration class $[F_0,F_1,F_2,F_3]$ has coordinates $(a,b,c,d)$ and,  as in the proof of the previous lemma, we set
$$
x:=\tau(F_1,F_2,F_3)  , \ y:=\tau(F_0,F_2,F_3)  , \ z:=\tau(F_0,F_1,F_3)  , \ w:=\tau(F_0,F_1,F_2)  ,
$$
where $xz=yw$ by Equation \eqref{eq triple in terms cross}. In this way we can rewrite Equation \eqref{eq boundedness log} as follows
\begin{equation}\label{eq configuration spaces unbounded}
\log|b/c|=\overline{f}(a,b,c,d)+\delta \overline{h}(x,y,z)  ,
\end{equation}
where $\overline{f}$ and $\overline{h}$ are the functions induced by $f$ and $h$ on the configuration spaces, respectively. Notice that we used the relation $xz=yw$ to get rid of the variable $w$ in the function $\overline{h}$. 

The key point now is that we can exploit Equation \eqref{eq triple in terms cross} to write the cross ratios $a,c,d$ in terms of the triple ratios $x,y,z$. More precisely, Equation \eqref{eq triple in terms cross} implies that 
\begin{align}\label{eq a c d in terms triple}
a&=\frac{y ((b-1)xz+bx+1)}{y(1 + b x) - (b-1) x z}, \\
c&=\frac{(b-1)xz+bx+1}{1 + y} , \nonumber \\
d&=\frac{b x (1 + y)}{y(1 + b x) - (b-1) x z} \nonumber  ,
\end{align}
where each equation has to be intended in its natural domain of definition. Additionally $a,c,d$ depend continuously on the triple ratios $x,y,z$ in such domain. If we fix a small neighborhood $U$ of positive measure of the point $(x,y,z)=(1,2,4)$, we have that the right-hand side of Equation \eqref{eq configuration spaces unbounded} is essentially bounded in $U$. On the other hand the left-hand side has the form 
$$
\log \left| \frac{b}{c(b,x,y,z)} \right|  ,
$$
and, for our choice of $x,y,z$, a continuity argument implies that the function lies in a neighborhood of the function
$$
\varphi(b):=\log \left| \frac{b}{c(b,1,2,4)} \right|=\log \left| \frac{3b}{5b-3} \right|  ,
$$
and the latter is clearly unbounded. This leads to a contradiction, hence there are no functions $f$ and $h$ satisfying Equation \eqref{eq boundedness log}. 
\end{proof}

We claim that Proposition \ref{prop intersection trivial sl3} proves Conjecture \ref{Conj Gap}. In fact, by Theorem \ref{Thm 1 to 3 for sl3} we know that any cohomology class in $NH^3_m(G \curvearrowright G/P)$ admits a representative which is a multiple of the function of Proposition \ref{prop intersection trivial sl3}. As a consequence any cohomology class in $NH^3_m(G \curvearrowright G/P)$ cannot admit an essentially bounded representative or, equivalently, the intersection of $NH^3_m(G \curvearrowright G/P)$ with the comparison map $H^3_{m,b}(G \curvearrowright G/P) \rightarrow H^3_m(G \curvearrowright G/P)$ is trivial, as claimed. Together with Lemma \ref{conj 7 for SL3}, this gives  another proof of Theorem \ref{Injectivity comparison sl3}.

\bibliographystyle{alpha}

\end{document}